
\documentclass[11pt]{article}
\usepackage{cite}
\usepackage{amsmath,amssymb,amsfonts}
\usepackage{amsthm}
\usepackage{algorithmic}
\usepackage{graphicx}
\usepackage{textcomp}
\usepackage{bbm}
\def\BibTeX{{\rm B\kern-.05em{\sc i\kern-.025em b}\kern-.08em
    T\kern-.1667em\lower.7ex\hbox{E}\kern-.125emX}}
    
 \usepackage{authblk}
    
\usepackage{tikz}
\usetikzlibrary{calc,patterns,angles,quotes}
\usepackage{cases}
\usepackage{float}
\usepackage{subcaption}
\usepackage{xcolor}

\definecolor{color_selva}{RGB}{34,139,34}
\definecolor{lightgray}{gray}{0.9}

\newtheorem{definition}{Definition}
\newtheorem{lemma}{Lemma}
\newtheorem{theorem}{Theorem}

\newtheorem{proposition}{Proposition}

\newtheorem{remark}{Remark}

\newtheorem{question}{Question}

\providecommand{\keywords}[1]
{
  \small	
  \textbf{\textit{Keywords---}} #1
}


\newcommand{\Z}{\mathbb{Z}}

\newcommand{\C}{\mathbb{C}}
\newcommand{\R}{\mathbb{R}}
\newcommand{\D}{\mathcal{D}}


\begin{document}
\title{Weighted Inequalities for $t$-Haar multipliers}
\author[1]{Daewon Chung}
\author[2]{Weiyan Huang}
\author[3]{Jean Carlo Moraes}
\author[4]{Mar\'ia Cristina Pereyra}
\author[5]{Brett D. Wick}
\affil[1]{Keimyung University, Daegu, S. Korea}
\affil[2,5]{Washington University, Saint Louis, MO}
\affil[3]{Universidade Federal do Rio Grande do Sul, Porto Alegre, Brazil}
\affil[4]{University of New Mexico, Albuquerque, NM}

\maketitle

\begin{abstract}
In this paper, we provide necessary and sufficient conditions on a triple of weights $(u,v,w)$ so that the $t$-Haar multipliers $T^t_{w,\sigma}$,  $t\in \R$,
are uniformly (on the choice of signs $\sigma$) bounded from $L^2(u)$ into $L^2(v)$. These dyadic operators have symbols  $s(x,I)=\sigma_I\,(w(x)/\langle w\rangle_I)^t$  which are functions of the  space variable $x\in\R$ and the frequency variable $I\in \mathcal{D}$,    making them   dyadic analogues of pseudo-differential operators.  Here $\mathcal{D}$ denotes the dyadic intervals,  $\sigma_I=\pm1$,  and  $\langle w\rangle_I$ denotes the integral average of $w$ on $I$. When $w\equiv 1$ we have the martingale transform and our conditions recover the known  two-weight necessary and sufficient conditions of Nazarov, Treil and Volberg.
We also show how these conditions are simplified when $u=v$. In particular,  the martingale  one-weight and the $t$-Haar multiplier unsigned and unweighted  (corresponding to $\sigma_I\equiv 1$ and $u=v\equiv 1$) known results are recovered or improved. We also obtain necessary and sufficient testing conditions of Sawyer type for the two-weight boundedness of a single variable Haar multiplier similar to those known for the martingale transform.
\end{abstract}

\keywords{$t$-Haar multipliers, martingale transform, one-weight and two-weight inequalities}

\section{Introduction}
\label{sec:introduction}

We are interested in seeking   necessary and sufficient conditions on a triple of weights $(u,v,w)$ so that the $t$-Haar Multiplier $T^t_{w,\sigma}$, $t\in\R$, defined in the next paragraph, maps $L^p(u)$ into $L^p(v)$ uniformly on the choice of signs $\sigma$.  Given a weight $u$, namely an almost everywhere positive locally integrable function on $\R$, $L^p(u)$ is the space of all functions such that $ \int_{\mathbb{R}} |f(x)|^p u(x)\, dx$ is finite. 

For a weight $w$ and $t\in\R$, the \emph{$t$-Haar multiplier} $T^t_{w,\sigma}$, acting on locally integrable functions $f$,  is defined formally as
\[ T^t_{w,\sigma} f(x) = \sum_{I\in \mathcal{D}} \sigma_I\left (\frac{w(x)}{\langle w\rangle_I}\right )^t  \langle f,h_I\rangle \,h_I(x),\]
where $\mathcal{D}$ denotes the dyadic intervals on $\R$, $\sigma_I=\pm 1$, $h_I$ is the $L^2$-normalized Haar function associated to the interval $I$, $\langle w\rangle_I$ 
denotes the integral average of the function $w$ on the interval $I$, and $\langle f,g\rangle$
denotes the inner product on $L^2(\R)$.  The Haar functions indexed on the dyadic intervals form an orthonormal basis on $L^2(\R)$, see~\cite{Ha},  and an unconditional basis on $L^p(\R)$ for $1<p<\infty$, see \cite{Pa, Mar}. 

The \emph{variable Haar multipliers} with symbol $s(x,I)$, acting on locally integrable functions $f$, are defined  formally by
\[ T_sf(x)=\sum_{I\in\mathcal{D}} s(x,I)\langle f,h_I\rangle h_I(x).\]
 They are a Haar analogue of pseudo-differential  operators with symbol $\sigma(x,\xi)$, a function of the spatial variable $x$ and the frequency variable $\xi$, given by
 \[ \Psi_{\sigma}f(x)= \int_{\R} \sigma(x,\xi ) \widehat{f}(\xi) e^{2\pi ix\xi}\,  d\xi,\]
 where the Fourier transform of a  function $f$ in the Schwartz class is defined to be $\widehat{f}(\xi) := \int_{\R}f(x)e^{-2\pi ix\xi}dx= \langle f, e_{\xi}\rangle$ and $e_{\xi}(x)=e^{2\pi ix\xi}$.
  The Fourier modes have been  replaced by the Haar basis, and the symbols $s(x,I)$ now depend on the spatial variable $x$ and the dyadic frequency variable $I$.   The $t$-Haar multipliers are variable Haar multipliers with symbols  $s(x,I)=\sigma_I (w(x)/\langle w\rangle_I )^t$ where $w$ is a weight, $\sigma_I=\pm1$, and $t\in\R$.

When $w\equiv 1$ then  $T^t_{1,\sigma}$ has symbol $s(x,I)=\sigma_I$ independent of $t$ (and $x$) and is denoted simply $T_{\sigma}$. The operator $T_{\sigma}$ is the martingale transform known to be bounded on $L^p(\R )$ for $1<p<\infty$, see \cite{Bur}, and bounded on $L^p(w)$ if and only if $w$ is in the \emph{dyadic Muckenhoupt} $A^d_p$ class for $1<p<\infty$, see \cite{TV}, namely
\begin{equation}
[w]_{A_p^d}:=\sup_{I\in\mathcal{D}} \bigg(\frac{1}{|I|}\int_Iw(x)\,dx\bigg) \bigg(\frac{1}{|I|}\int_Iw^{\frac{-1}{p-1}}(x)\,dx\bigg)^{p-1}<\infty. \label{def:Ap}
\end{equation}
Optimal quantitative estimates  are known for the martingale transform, see \cite{Wi} for the linear bound on $L^2(w)$, \cite{PetV} for $p>2$  and  \cite{DGPPet} for $1<p<2$. More precisely, there is a positive constant $C_p$ independent of the choice of signs $\sigma$  such that for all $f\in L^p(w)$,
\[ \|T_{\sigma} f\|_{L^p(w)} \leq C_p [w]_{A^d_p}^{\max\{1,\frac{1}{p-1}\}} \|f\|_{L^p(w)}.\]

In 1999, Nazarov, Treil and Volberg \cite{NTV1} found necessary and sufficient conditions on pairs $(u,v)$ of weights  for the uniform (on the choice of signs $\sigma$) boundedness of the martingale transforms $T_{\sigma}$ from $L^2(u)$ into $L^2(v)$. These conditions are a joint $A_2$-condition, a two weight Carleson condition and its dual condition and the two-weight boundedness of a certain positive operator. A few years later in 2006 they found necessary and sufficient conditions of Sawyer type for the boundedness of individual martingale transforms \cite{NTV2}.

It is well known, see \cite{P},  that when $\sigma_I=1$ for all $I\in\mathcal{D}$, $t=1$,  and $u=v\equiv 1$,  then $T^1_{w,1}$, denoted simply $T_w$,  is bounded from $L^p(\mathbb{R})$ into itself if and only if the weight $w$ is in the \emph{dyadic Reverse H\"older $p$ class} for $1<p<\infty$, namely
\begin{equation}
[w]_{RH^d_p}:=\sup_{I\in\mathcal{D}} \bigg(\frac{1}{|I|}\int_Iw(x)dx\bigg)^{-1}
\bigg(\frac{1}{|I|}\int_Iw^p(x)\, dx\bigg)^{\frac{1}{p}}<\infty. \label{def:RHp}
\end{equation}
Optimal quantitative estimates  on $L^p(\R)$ are known for $T_w$, see \cite{P2} for the quadratic bound on $L^2(\R )$  and  \cite[Chapter 4]{Pa} for $p\neq 2$. More precisely, there is a positive constant $C_p$ such that  for all $f\in L^p(\R)$ and for  $p'=p/(p-1)$ the dual exponent of $p$,
\[\|T_w f\|_{L^p(\R)} \leq C_p [w]_{RH^d_p}^{\max\{p,p'\}} \|f\|_{L^p(\R)}.\]
See \cite{P,P3} for how small variants of these operators first appeared naturally in the study of Sobolev spaces on Lipschitz curves and in connection to the resolvent of the dyadic paraproduct.

Note that the martingale transform is a Calder\'on-Zygmund operator but the variable Haar multipliers $T_w$ are not always Calder\'on-Zygmund type operators as they are not necessarily bounded on $L^p(\R )$ for all $1<p<\infty$. To see this, suffices to find a weight $w$ so that $w\in RH_p$ for $1<p<p_0$ but not for $p=p_0$, in that case $T_w$ will be bounded on $L^p(\R )$ if and only if $1<p<p_0$. The power weight $w_{\alpha}(x)=|x|^{\alpha}\in RH_p$ 
for $\alpha=-1/p_0$ has that property. These operators are outside the scope of the Aimar-Crescimbeni-Nowak theory of dyadic operators of Calder\'on-Zygmund type on spaces of homogeneous type \cite{ACN}.

The one- or two-weight theories for the $t$-Haar multipliers have not been studied before.
In this paper we find necessary and sufficient conditions on the triple $(u,v,w)$ for the uniform (on the choice of signs $\sigma$) boundedness of $T^t_{w,\sigma}$ from $L^2(u)$ into $L^2(v)$ that mimic the Nazarov-Treil-Volberg conditions for the martingale transform.   The conditions in the theorem are sufficient for individual $t$-Haar multipliers. We also find necessary and sufficient testing conditions  of Sawyer Type for individual $t$-Haar multipliers to be bounded from $L^2(u)$ into $L^2(v)$.


We prove the following two-weight theorem for $t$-Haar multipliers.  

\begin{theorem}[Two-weight theorem] \label{thm:two-weight} 
Given a triple of weights $(u,v,w)$ and $t\in \R$. Denote $\Delta_I w:= \langle w\rangle_{I_+} - \langle w\rangle_{I_-} $. The  $t$-Haar multipliers $T^t_{w,\sigma}$ are uniformly (on $\sigma$) bounded from $L^2(u)$ into $L^2(v)$ if and only if the following four conditions hold,
\begin{itemize}
\item[{\rm (i)}] Joint three-weight condition: $\displaystyle{\quad C_1:=\sup_{I\in\mathcal{D}} \frac{\langle u^{-1}\rangle_I \langle v w^{2t}\rangle_I}{\langle w\rangle_I^{2t}} <\infty}$. 
\item[{\rm (ii)}] Carleson condition: there is a constant $C_2>0$ such that for all $I\in\mathcal{D}$, 
$$\frac{1}{|I|} \sum_{J\in\mathcal{D}: J\subset I}  |J||\Delta_Ju^{-1}|^2 \frac{\langle vw^{2t}\rangle_J}{\langle w\rangle_J^{2t}}\leq C_2\,\langle u^{-1}\rangle_I.$$
\item[ {\rm (iii)}] Dual Carleson condition: there is a  constant $C_3>0$ such that for all $I\in\mathcal{D}$, 
$$\frac{1}{|I|} \sum_{J\in\mathcal{D}: J\subset I}  |J||\Delta_J (vw^{2t})|^2 \frac{\langle u^{-1}\rangle_J}{\langle w\rangle_J^{2t}}\leq C_3\, \langle vw^{2t} \rangle_I.$$
\item[{\rm (iv)}] The positive operator $P^t_{w,\lambda}$ is bounded from $L^2(u)$ into $L^2(v)$ with operator norm $C_4>0$, where  $\lambda_I:= 
\big ({|\Delta_I(vw^{2t})|}/{\langle vw^{2t}\rangle_I}\big )\big ({|\Delta_Iu^{-1}|}/{\langle u^{-1}\rangle_I} \big ) \big ({|I|}/{\langle w\rangle_I^t}\big )$ and 
$$P^t_{w,\lambda}f(x) := \sum_{I\in\mathcal{D}} \frac{w^t(x)}{|I|}\lambda_I \langle f\rangle_I \mathbbm{1}_I(x).$$
\end{itemize}
  Moreover  $ \|T^t_{w,\sigma} \|_{L^2(u)\to L^2(v)}\approx  \sqrt{C_1} + \sqrt{C_2} +\sqrt{C_3}+ C_4.$

\end{theorem}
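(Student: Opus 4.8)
The plan is to factor out the spatial weight, reduce the uniform bound to a single bilinear estimate, and then follow the Nazarov--Treil--Volberg scheme, with necessity proved by testing and averaging over signs and sufficiency by an algebraic splitting of the bilinear form into four pieces matched to conditions (i)--(iv). Since $T^t_{w,\sigma}f = w^t\,\tilde T_\sigma f$ where $\tilde T_\sigma f := \sum_{I\in\mathcal{D}} \sigma_I\langle w\rangle_I^{-t}\langle f,h_I\rangle h_I$, boundedness from $L^2(u)$ into $L^2(v)$ is equivalent to $\tilde T_\sigma\colon L^2(u)\to L^2(V)$ with $V:=vw^{2t}$. By duality in $L^2(V)$ (so the test functions $g$ live in $L^2(V^{-1})$) and the exact identity $\sup_\sigma\sum_I\sigma_I c_I a_I b_I = \sum_I c_I|a_I||b_I|$ obtained at the optimal signs $\sigma_I=\sgn(\langle f,h_I\rangle\langle g,h_I\rangle)$, uniform boundedness is equivalent to the absolute bilinear estimate $\sum_{I} \langle w\rangle_I^{-t}\,|\langle f,h_I\rangle|\,|\langle g,h_I\rangle|\le C\,\|f\|_{L^2(u)}\|g\|_{L^2(V^{-1})}$ with $C=\|T^t_{w,\sigma}\|_{L^2(u)\to L^2(v)}$. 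This is why I would carry out the whole argument in the variables $(u^{-1},V)$, in which all four conditions are naturally stated.

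For necessity, I would prove (i) by inserting the localized test functions $f=u^{-1}\mathbbm{1}_{I_\pm}$ and $g=V\mathbbm{1}_{I_\pm}$: the diagonal $I$-term dominates and, after combining the two children, yields $\langle u^{-1}\rangle_I\langle V\rangle_I/\langle w\rangle_I^{2t}\lesssim C^2$. For (ii) and (iii), I would apply the operator to $u^{-1}\mathbbm{1}_I$ (respectively $V\mathbbm{1}_I$), then average over the signs $\sigma$ and use Khintchine's inequality to annihilate the off-diagonal cross terms; the surviving diagonal is precisely the left-hand side of each Carleson condition (using $|J||\Delta_J\mu|^2=4|\langle\mu,h_J\rangle|^2$), while the right-hand side is the testing norm $\|u^{-1}\mathbbm{1}_I\|_{L^2(u)}^2=|I|\langle u^{-1}\rangle_I$ (respectively $\|V\mathbbm{1}_I\|_{L^2(V^{-1})}^2=|I|\langle V\rangle_I$). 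Condition (iv) is the delicate one: I would isolate the positive operator as the leftover piece of the very decomposition used in the sufficiency direction. Once (i)--(iii) are known to be necessary, the positive remainder in that identity must also be bounded; an averaging over $\sigma$ (showing $P^t_{w,\lambda}$ is, after averaging, dominated by the Haar multiplier) makes the extraction rigorous and gives $C_4\lesssim\|T^t_{w,\sigma}\|$.

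For sufficiency I would assume (i)--(iv) and expand the bilinear form $\sum_I\langle w\rangle_I^{-t}\langle f,h_I\rangle\langle g,h_I\rangle$ using the elementary identity $\langle\phi\rangle_{I_\pm}=\langle\phi\rangle_I\pm|I|^{-1/2}\langle\phi,h_I\rangle$ together with the change of basis between the standard Haar system and the weighted Haar systems adapted to $u^{-1}$ and to $V$. This produces four terms: a diagonal term, two paraproduct terms carrying the factors $\Delta_I u^{-1}$ and $\Delta_I V$, and a positive remainder. The diagonal term is bounded by Cauchy--Schwarz and (i); each paraproduct is bounded by the weighted dyadic Carleson embedding theorem, whose Carleson-measure hypotheses are exactly (ii) and (iii); and the remainder is recognized as the bilinear form of $P^t_{w,\lambda}$ and controlled by (iv). Tracking constants gives $C\lesssim\sqrt{C_1}+\sqrt{C_2}+\sqrt{C_3}+C_4$, which together with the reverse bounds from necessity yields the claimed equivalence $\|T^t_{w,\sigma}\|_{L^2(u)\to L^2(v)}\approx\sqrt{C_1}+\sqrt{C_2}+\sqrt{C_3}+C_4$.

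The main obstacle is the sufficiency decomposition: organizing the expansion so that precisely these four clean pieces emerge, checking that the two paraproduct measures are genuinely Carleson with constants given by (ii) and (iii), and verifying that the leftover positive part is exactly $P^t_{w,\lambda}$ rather than something larger. The frequency-dependent factor $\langle w\rangle_I^{-t}$, which does not commute with the weighted Haar change of basis, is what makes the bookkeeping heavier than in the $w\equiv1$ martingale-transform case of Nazarov--Treil--Volberg; controlling, via the joint condition (i), the additional cross terms it generates is where I expect most of the effort to lie.
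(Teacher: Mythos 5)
Your proposal is correct and follows essentially the same route as the paper: the same reduction to a constant Haar multiplier acting into $L^2(V)$ with $V=vw^{2t}$, the same four-term decomposition via the weighted-Haar change of basis (diagonal term from (i), two paraproduct/Carleson terms from (ii)--(iii), positive remainder from (iv)) for sufficiency, and the same testing-plus-Khintchine and remainder-extraction arguments for necessity. The only cosmetic difference is that the paper passes from the bilinear bound on the extracted remainder to boundedness of $P^t_{w,\lambda}$ by invoking the Nazarov--Treil--Volberg bilinear embedding theorem, whereas you exploit positivity of the operator directly; both are valid.
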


%
When $\sigma\equiv 1$, the corresponding $t$-Haar multipliers are denoted by $T^t_w$ and were introduced in \cite{KP}. When $u=v\equiv 1$, Theorem~\ref{thm:two-weight} recovers known necessary and sufficient conditions for boundedness on $L^2(\R)$ when $t\leq 0$ or $t\geq 1/2$, and improves on the best sufficient condition that was known in the remaining cases $0<t<1/2$. The $t$-Haar multipliers $T^t_w$ were subsequently studied in \cite{P2} to get quantitative estimates on $L^2(\R)$ for the cases $t=1$, $t=\pm 1/2$ using the Bellman function technique. See also \cite[Chapter 5]{Be} for improved quantitative estimates for all $t$, \cite[Section 6.2]{Mo} and \cite{MoP} for further extensions of these results to $t$-Haar multipliers of higher multiplicity with bounds depending polynomially on the complexity. 

When restricting to the case $u=v$ and $t\notin (0,1)$, condition (iv)  in Theorem~\ref{thm:two-weight} is implied by conditions (i)-(iii), therefore we can prove the following one-weight theorem which is new in the literature.


\begin{theorem}[One-weight theorem] \label{thm:one-weight}
Given a pair of weights $(u,w)$, $t\leq 0$ or $t\geq 1$,  then the $t$-Haar multipliers $T^t_{w,\sigma}$ are uniformly (on $\sigma$) bounded from $L^2(u)$ into itself if and only if
\begin{itemize}
\item[{\rm (i)}] Joint two-weight condition: $\displaystyle{\quad C_1:=\sup_{I\in\mathcal{D}} \frac{\langle u^{-1}\rangle_I \langle u w^{2t}\rangle_I}{\langle w\rangle_I^{2t}} <\infty}$, 
\item[{\rm (ii)}] Carleson condition ($u^{-1}$-Carleson sequence): there is $C_2>0$  such that for all $I\in\mathcal{D}$,
$$\frac{1}{|I|} \sum_{J\in\mathcal{D}: J\subset I}  |J||\Delta_Ju^{-1}|^2 \frac{\langle uw^{2t}\rangle_J}{\langle w\rangle_J^{2t}}\leq C_2\, \langle u^{-1}\rangle_I.$$
\item[ {\rm (iii)}] Dual Carleson condition ($uw^{2t}$-Carleson sequence):  there is $C_3>0$  such that for all $I\in\mathcal{D}$,
$$\frac{1}{|I|} \sum_{J\in\mathcal{D}: J\subset I}  |J||\Delta_J (uw^{2t})|^2 \frac{\langle u^{-1}\rangle_J}{\langle w\rangle_J^{2t}}\leq C_3\, \langle uw^{2t} \rangle_I.$$

\end{itemize}
Moreover $\|T^t_{w,\sigma}\|_{L^2(u)\to L^2(u)} \approx \sqrt{C_1} + \sqrt{C_2} + \sqrt{C_3} + \sqrt{C_2C_3}.$

\end{theorem}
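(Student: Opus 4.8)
The plan is to deduce Theorem~\ref{thm:one-weight} directly from Theorem~\ref{thm:two-weight} specialized to $v=u$. With $v=u$, conditions (i)--(iii) of Theorem~\ref{thm:two-weight} are exactly conditions (i)--(iii) here, and they are necessary by that theorem. Hence the entire content is to show that, \emph{in the range $t\le 0$ or $t\ge 1$}, condition (iv)---the $L^2(u)\to L^2(u)$ boundedness of $P^t_{w,\lambda}$---is already implied by (i)--(iii); then invoking Theorem~\ref{thm:two-weight} with all four conditions in hand yields boundedness. I would prove this implication by dualizing: $\|P^t_{w,\lambda}\|_{L^2(u)\to L^2(u)}=\sup|B(f,g)|$ over $\|f\|_{L^2(u)},\|g\|_{L^2(u)}\le 1$, where a direct computation gives the positive bilinear form
\begin{equation*}
B(f,g)=\sum_{I\in\mathcal{D}}\lambda_I\,\langle f\rangle_I\,\langle g\,w^t u\rangle_I .
\end{equation*}

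Next I would pass to the natural weighted averages. Writing $\sigma:=u^{-1}$ and $\omega:=uw^{2t}$, with $\sigma(I):=\int_I\sigma$, $\omega(I):=\int_I\omega$, the maps $f\mapsto\phi:=fu$ and $g\mapsto\psi:=gw^{-t}$ are isometries $L^2(u)\to L^2(\sigma)$ and $L^2(u)\to L^2(\omega)$, and one has $\langle f\rangle_I=\langle u^{-1}\rangle_I\,\langle\phi\rangle^\sigma_I$ and $\langle g\,w^tu\rangle_I=\langle uw^{2t}\rangle_I\,\langle\psi\rangle^\omega_I$, where $\langle\cdot\rangle^\sigma_I,\langle\cdot\rangle^\omega_I$ denote averages against $\sigma,\omega$. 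Substituting and unwinding the definition of $\lambda_I$, the factors $\langle u^{-1}\rangle_I$ and $\langle uw^{2t}\rangle_I$ cancel, leaving
\begin{equation*}
B(f,g)=\sum_{I\in\mathcal{D}}\kappa_I\,\langle\phi\rangle^\sigma_I\,\langle\psi\rangle^\omega_I,\qquad \kappa_I:=|\Delta_I(uw^{2t})|\,|\Delta_Iu^{-1}|\,\frac{|I|}{\langle w\rangle_I^{t}} .
\end{equation*}
Meanwhile, conditions (ii) and (iii) say precisely that the sequences $\nu_I:=|I|\,|\Delta_Iu^{-1}|^2\,\langle uw^{2t}\rangle_I/\langle w\rangle_I^{2t}$ and $\rho_I:=|I|\,|\Delta_I(uw^{2t})|^2\,\langle u^{-1}\rangle_I/\langle w\rangle_I^{2t}$ are Carleson with respect to $\sigma$ and $\omega$, with constants $C_2$ and $C_3$.

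The crux---and the only place the hypothesis $t\notin(0,1)$ enters---is the pointwise domination $\kappa_I\le (\nu_I\rho_I)^{1/2}$ for every $I\in\mathcal{D}$. Since $(\nu_I\rho_I)^{1/2}/\kappa_I=(\langle uw^{2t}\rangle_I\langle u^{-1}\rangle_I)^{1/2}/\langle w\rangle_I^{t}$, this is equivalent to $\langle w\rangle_I^{2t}\le\langle uw^{2t}\rangle_I\,\langle u^{-1}\rangle_I$. By Cauchy--Schwarz, $\langle w^t\rangle_I\le(\langle uw^{2t}\rangle_I)^{1/2}(\langle u^{-1}\rangle_I)^{1/2}$, so it suffices that $\langle w\rangle_I^{t}\le\langle w^t\rangle_I$; this is exactly Jensen's inequality for $x\mapsto x^t$, which is convex precisely when $t\le 0$ or $t\ge 1$ (and concave for $0<t<1$, where the argument breaks down, matching the hypothesis). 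With this bound I would run an asymmetric Cauchy--Schwarz: setting $\gamma_I:=(\nu_I/\rho_I)^{1/2}$ on the indices where both are positive (the rest contribute $\kappa_I=0$), and using $\kappa_I\gamma_I\le\nu_I$, $\kappa_I\gamma_I^{-1}\le\rho_I$,
\begin{align*}
|B(f,g)| &\le\Big(\sum_I\kappa_I\gamma_I(\langle\phi\rangle^\sigma_I)^2\Big)^{1/2}\Big(\sum_I\kappa_I\gamma_I^{-1}(\langle\psi\rangle^\omega_I)^2\Big)^{1/2}\\
&\le\Big(\sum_I\nu_I(\langle\phi\rangle^\sigma_I)^2\Big)^{1/2}\Big(\sum_I\rho_I(\langle\psi\rangle^\omega_I)^2\Big)^{1/2}.
\end{align*}

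Applying the weighted dyadic Carleson embedding theorem to each factor---valid by (ii) and (iii)---bounds the two sums by $C\,C_2\|\phi\|_{L^2(\sigma)}^2=C\,C_2\|f\|_{L^2(u)}^2$ and $C\,C_3\|g\|_{L^2(u)}^2$, giving $|B(f,g)|\lesssim\sqrt{C_2C_3}\,\|f\|_{L^2(u)}\|g\|_{L^2(u)}$, i.e. $C_4\lesssim\sqrt{C_2C_3}$, so (iv) holds and Theorem~\ref{thm:two-weight} applies. The quantitative equivalence then follows by combining the two-weight estimate $\|T^t_{w,\sigma}\|\approx\sqrt{C_1}+\sqrt{C_2}+\sqrt{C_3}+C_4$ with $C_4\lesssim\sqrt{C_2C_3}$ for the upper direction and the necessity of (i)--(iii) for the lower direction, the term $\sqrt{C_2C_3}$ recording the size of $C_4$. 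The main obstacle is isolating the pointwise bound $\kappa_I\le(\nu_I\rho_I)^{1/2}$ and recognizing that it is exactly the convexity of $x^t$ (hence the restriction $t\notin(0,1)$) that lets the two separate Carleson conditions combine through the asymmetric Cauchy--Schwarz; the remainder is the standard positive-operator/Carleson-embedding machinery.
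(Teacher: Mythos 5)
Your proposal is correct and follows essentially the same route as the paper: necessity via the two-weight theorem specialized to $v=u$, and sufficiency by showing that for $t\le 0$ or $t\ge 1$ conditions (i)--(iii) imply the boundedness of $P^t_{w,\lambda}$, using exactly the paper's key step that Jensen/H\"older ($\langle w\rangle_I^t\le\langle w^t\rangle_I$) plus Cauchy--Schwarz gives the reverse of (i), hence the pointwise domination of $\lambda_I$ by the geometric mean of the two Carleson sequences, followed by Cauchy--Schwarz and the weighted Carleson embedding to obtain $C_4\lesssim\sqrt{C_2C_3}$. Your ``asymmetric'' Cauchy--Schwarz with $\gamma_I$ and the change of variables to $\sigma$- and $\omega$-averages are only notational variants of the paper's computation.
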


When $w\equiv 1$, Theorem~\ref{thm:one-weight} recovers the one-weight necessary and sufficient conditions for the boundedness of the martingale transform on $L^2(u)$,  namely $u\in A_2^d$, see \cite{TV,Wi}. In order to see that, note that when $w\equiv 1$, condition (i) in Theorem~\ref{thm:one-weight} becomes  $u\in A_2^d$. Conditions (ii) and (iii) are the well known Buckley summation conditions for the $RH_1^d$ weights $u$ and $u^{-1}$, see \cite{Bu} (see sections~\ref{sec:weights} and~\ref{sec:weight-prop} for definitions and details). 
Thus, when $w\equiv 1$, condition  (i) implies conditions (ii) and (iii), reducing the result to the known statement ``The martingale transform is uniformly (with respect to the choice of signs $\sigma$) bounded on $L^2(w)$ if and only if $w\in A_2^d$".

When $u\equiv 1$ and $t=1$, condition (i) in Theorem~\ref{thm:one-weight} becomes  $u\in RH_2^d$ and is equivalent to condition (iii) as we will prove in Proposition~\ref{prop:RHp} in Section~\ref{sec:weights}. Condition (ii) becomes trivially true  as $\Delta_J1=0$ for all $J\in \mathcal{D}$.  When in addition $\sigma\equiv 1$, this reduces to the known statement ``$T_w$ is bounded on $L^2(\R)$ if and only if $w\in RH_2^d$".

We  also obtain conditions for the boundedness of individual variable Haar multipliers in the spirit of the corresponding  Nazarov-Treil-Volberg results for individual  martingale transform \cite{NTV2}.

We have stated and proved all our theorems on $\R$.
These results are  valid  in the general set up where the underlying space is a space of homogeneous type (SHT), instead of $\mathbb{R}$. In order to be more precise, let us define such an operator. Let $(X,d,\mu)$ be a  space of homogeneous type with underlying  quasi-metric $d$ and doubling measure $\mu$, and $\mathfrak{D}$ be a dyadic system of cubes over it, the reader can find in  \cite[Section 3.5]{We} how to set up a dyadic system in these spaces so that each cube has at most two children and one Haar function associated to it. Such systems are sometimes called Wilson bases \cite{W}. Given a function $f \in L^2(X)$ and a weight $w: X \rightarrow \mathbb{R}$, we formally define the operator $T^t_{w,\sigma}$ as
$$
T^t_{w,\sigma}f(x):= \sum_{Q \in \mathfrak{D}} \sigma_Q \bigg( \frac{w(x)}{\langle w \rangle_Q}\bigg)^t \langle f, h_Q \rangle h_Q(x).
$$
Similarly, we could apply  conditions (i)-(iv) in Theorem~\ref{thm:two-weight}, or conditions (i)-(iii) in Theorem~\ref{thm:one-weight}, to the dyadic system $\mathfrak{D}$ and follow the same proof by extending the results used to a space of homogeneous type. Due to the chosen techniques for proving the result, we present the proof in $\mathbb{R}$ since it already encompasses all the key ideas. Writing the argument in the general SHT would only complicate the notation and unnecessarily lengthen the proof. The diligent reader can follow on the foot-steps of \cite{Ch1,We, NV,NRezV} to reproduce most of the results in $\R^n$ or in spaces of homogeneous type. Throughout the text we annotate where generalizations to  spaces of homogeneous type of the results used  can be found.

In Section~\ref{Defandlemma}, we introduce basic definitions and results we will use in the paper including weights, dyadic intervals,  dyadic $A_p$ and $RH_p$ classes of weights, weighted and unweighted Haar bases, weighted dyadic maximal function, weighted Carleson sequences and weighted Carleson's embedding theorem. In Section~\ref{sec:weight-prop}, we prove some propositions involving $RH^d_p$ and $A^d_p$ weights and  some summation conditions of Buckley type. In Section~\ref{sufficiency(i)-(iv)}, we prove that conditions (i)-(iv) in Theorem~\ref{thm:two-weight} are sufficient to obtain boundedness of each signed $t$-Haar multiplier, with a bound that is independent from the choice of signs $\sigma$. In Section~\ref{u=v=1}, we show these sufficient conditions recover  known necessary and sufficient conditions for the boundedness of $t$-Haar multipliers on $L^2(\R)$ when $\sigma=1$ and $t\leq 0$ or $t< 1/2$ (the case $u=v\equiv 1$). In the remaining case, $0<t\leq 1/2$, an improved sufficient condition is obtained that is also necessary.  In Section~\ref{u=v}, we show that in the one-weight case ($u=v$) corresponding conditions (i)-(iii) are sufficient for the uniform boundedness of the signed $t$-Haar multipliers on $L^2(u)$, provided $t\leq 0$ or $t\geq 1$ (Theorem~\ref{thm:one-weight}). In Section~\ref{sec:necessity(i)-(iii)}, we prove the necessity of conditions (i)-(iii) in Theorem~\ref{thm:two-weight}. In Section~\ref{sec:necessity(iv)}, after introducing the Nazarov, Treil and Volberg bilinear embedding theorem,  we show the necessity of condition (iv). Finally, in Section~\ref{individual}, we present necessary and sufficient testing conditions of Sawyer type for the boundedness of individual $t$-Haar multipliers.

The notation $f\equiv k$ means the function $f$ is identically equal to $k$. The notation $A\lesssim B$ means there exists a constant $C>0$ such that $A\leq CB$. The notation $A\approx B$ means that $A\lesssim B$ and $B\lesssim A$.




\subsubsection*{Acknowledgements}  The authors are grateful  to the anonymous referee for many comments that improved the presentation, in particular we appreciated the historical comments, setting the record straight.  B. D. Wick's research is supported in part by National Science Foundation Grants DMS \#1800057, \#2054863, and \#2000510 and Australian Research Council DP 220100285.  W. Huang acknowledges support from National Science Foundation Grant DMS \#1800057. D. Chung and J.C. Moraes acknowledge support from their institutions in South Korea and Brazil. M. C. Pereyra thanks them all for having kept some mathematics in her life during a particularly trying time for her family.

\section{Definitions and frequently used known results.}\label{Defandlemma}
In this section we introduce weights, dyadic intervals, dyadic $A_p$ and $RH_p$  classes of weights, the dyadic maximal function, the Haar and weighted Haar functions, and Carleson sequences. We also record known estimates for the dyadic maximal function  and the weighted Carleson's embedding lemma that will be used later.

\subsection{Weights and dyadic intervals}\label{sec:weights}
 A \emph{weight} $w$ is a non-negative locally integrable function in $\R$. The $w$-measure of a measurable set $E$ of $\R$ is denoted $w(E):= \int_Ew(x)\,dx$, and $|E|$ stands for the Lebesgue measure of $E$. We define $\langle f\rangle_E^w$ to be the integral average of $f$ on $E$ with respect to the measure $wdx$, 
 \[\langle f\rangle_E^w:=\frac{1}{w(E)}\int_E f(x) \,w(x)\,dx.\]
 When $w\equiv 1$ we simply write $\langle f\rangle_E$. 
 
 Given a weight $w$, a measurable function $f:\R\to \C$ is in $L^p(w)$ for $1\leq p <\infty$ if and only if $\|f\|_{L^p(w)}:=\big (\int_\R |f(x)|^p w(x)\,dx\big )^{\frac1p}<\infty$. In this paper we are concerned with the case $p=2$.

The \emph{standard dyadic intervals} on $\R$, denoted by $\mathcal{D}$, are the intervals of the form $[2^{-j}k, 2^{-j}(k+1))$ with $j,k\in\Z$. These intervals have the very useful nested property: if $I,J\in\mathcal{D}$ then one of the following holds: $I\subset J$, or $J\subset  I$, or $J\cap I=\emptyset$.  Given $I\in\mathcal{D}$ there is a unique interval $\tilde{I}\in \mathcal{D}$,  called the parent of $I$, such that $I\subset \tilde{I}$ and $|\tilde{I}|=2|I|$. Each interval $I\in \mathcal{D}$ has two ``children" in $\mathcal{D}$, namely its right and left halves, denoted  $I_{\pm}\in\mathcal{D}$. Note that $I=I_+\cup I_-$, $I_+\cap I_-=\emptyset$, and $|I_{\pm}|=|I|/2$.

We defined dyadic $A^d_p$ and $RH^d_p$ weights for $1<p<\infty$  in Section~\ref{sec:introduction} in displayed equations  \eqref{def:Ap} and \eqref{def:RHp}.   When $p=1$ we are also interested in the definition of the \emph{dyadic $RH^d_1$ class} of weights. A weight $w$ belongs to $RH^d_1$ if an only if
\begin{equation}\label{def:RH1}
[v]_{RH^d_1}:= \sup_{I\in\mathcal{D}}\left\langle \frac{v}{\langle v\rangle_I}\log \frac{v}{\langle v\rangle_I}\right\rangle_I < \infty.
\end{equation}
This is formally the limit as $p$ approaches $1^+$ of $[v]_{RH_p}$, see \cite{BeRez}.

Given a weight $w$ in $A^d_p$, its dual weight is defined to be $w^{-1/(p-1)}$.
 It is very simple to verify that the  dual weight $w^{-1/(p-1)}$ belongs to $A^d_{p'}$ with $1/p+1/p'=1$. Furthermore $[w]_{A_p^d}^{1/(p-1)}=[w^{-1/(p-1)}]_{A_{p'}^d}$.

All Muckenhoupt conditions $A_p^d$ and $RH_p^d$ in this note are dyadic conditions.
We define the $A^d_{\infty}$ \emph{class of weights} to be the union of all $A_p^d$ classes for $p>1$, that is $A_{\infty}^d=\cup_{p>1}A^d_p$. It can be shown that $RH^d_1$ is the union of all the $RH_p^d$ classes for $p>1$, that is $RH_1^d=\cup_{p>1}RH^d_p$. In the dyadic world
$A^d_{\infty}$ is a proper subset of $RH^d_1$, as $A^d_{\infty}$ implies dyadic doubling\footnote{A weight $w$ is \emph{dyadic doubling} weight if there is a constant $C>0$ such that $w(\tilde{I})\leq Cw(I)$ for all $I\in \mathcal{D}$ where $\tilde{I}$ is the parent of $I$.}, but $RH^d_1$ does not, see \cite{Bu}.  If we had not restricted the intervals to be dyadic we will have the ``continuous" or ``non-dyadic" $A_p$ and $RH_p$ classes and, in this case,  $A_{\infty}=RH_1$.

The theory of dyadic $A_p^d$ and $RH_p^d$ classes on spaces of homogeneous type can be found in~\cite{KaLPW}. Not everything translates verbatim, for example the celebrated G\"ehring's Lemma ($w\in RH_p$ implies $w\in RH_{p+\epsilon}$ for some $\epsilon>0$) is not true in the continuous case for  spaces of homogeneous type, however the property does hold for dyadic $RH_p^d$, see~\cite{AW} and references therein. Classical references for the theory of $A_p$ and $RH_p$ weights are \cite{GaRu, CrMPz}.

\subsection{Haar bases }
For any interval $I\subset \R$
there is a \emph{Haar function} defined by
$$h_I(x)=\frac{1}{\sqrt{|I|}}\Big(\mathbbm{1}_{I_+}(x)-\mathbbm{1}_{I_-}(x)\Big)\,,$$
where $\mathbbm{1}_I$ denotes the characteristic function of the interval $I\,$, and $I_+$, $I_-$ denote the right and left half of $I$ (the ``children" of $I$), respectively. For a given weight $v$ and an interval $I$, define the \emph{weighted Haar function} as
$$h_I^v(x)=\frac{1}{\sqrt{v(I)}}\left(\sqrt{\frac{v(I_-)}{v(I_+)}}
\,\mathbbm{1}_{I_+(x)}-\sqrt{\frac{v(I_+)}{v(I_-)}}\,\mathbbm{1}_{I_-(x)}\right)\,,$$
where as before $v(I)=\int_I v(x)\, dx$. Note that if $v\equiv 1$, then $h^1_I=h_I$.

The collections of Haar functions and of weighted Haar functions indexed on the dyadic intervals,  $\{h_I\}_{I\in\mathcal{D}}$
and $\{h^v_I\}_{I\in\mathcal{D}}$, form orthonormal systems on $L^2(\R )$ and $L^2(v)$ respectively. 
Therefore, for any weight $v$, by  Bessel's inequality we have the following:
$$\sum_{I\in\mathcal{D}}|\langle f,h_I^v\rangle_v|^2\leq \|f\|_{L^2(v)}^2\,,$$
where here $\langle f,g\rangle_v$ denotes the $L^2(v)$ inner product, namely $\int_{\R} f(x) \overline{g(x)} v(x)\, dx$.
\noindent  Moreover, if $v$ is a regular weight\footnote{A weight $v$ is \emph{regular} if and only if $v((-\infty,0))=v([0,\infty))=\infty$.}, then every square integrable function  $f\in L^2(v)$,
has an expansion in the corresponding weighted Haar basis $\{h^v_I\}_{I\in\mathcal{D}}$,
$$f(x)=\sum_{I\in\mathcal{D}}\langle f,h^v_I\rangle_v \,h^v_I(x)\,,$$
where the sum converges a.e. in $L^2(v)$. We conclude that for regular weights $v$,  the  family
$\{h_I^v\}_{I\in\mathcal{D}}$ is a complete orthonormal system. Note that if $v$ is
not a regular weight, so that $v((-\infty,0))$, $v([0,\infty))$, or both are finite,
then  either $\mathbbm{1}_{(-\infty,0)}$, $\mathbbm{1}_{(0,\infty)}$, or both are orthogonal to $h_I^v$ for every dyadic interval $I$. In particular the Haar functions indexed on the dyadic intervals form an orthonormal basis of $L^2(\R)$.  

For the dyadic system in $\mathbb{R}^n$, we recommend for the reader the paper \cite{W}, and for more details \cite[Section 2.2]{Ch1}. In spaces of homogeneous type (SHT), the basics can be found in \cite[Section 3.5]{We}. The proof that the Haar System defined on SHT $(X,d, \mu)$ forms a complete orthonormal basis for $L^2(X)$ can be found in \cite[Theorem 6.4.1]{We} or in \cite[Theorem 4.1]{KaLPW}.  

The weighted and unweighted Haar functions are related linearly as follows:

\begin{lemma}[\cite{NTV1}]\label{WHB}
For any weight $v$ and every $I\in\mathcal{D} $, there are numbers $\alpha_I^v$,
$\beta^v_I$ such that
$$ h_I(x) = \alpha^v_I \,h^v_I(x) + \beta_I^v \,\frac{\mathbbm{1}_I(x)}{\sqrt{|I|}}\,,$$

\noindent where \emph{(i)} $|\alpha^v_I | \leq \sqrt{\langle v\rangle_I},$
\emph{ (ii)}  $|\beta^v_I| \leq {|\Delta_I v|}/{\langle v\rangle_I},$
and $\Delta_I v:= \langle v\rangle_{I_+} - \langle v\rangle_{I_-}.$ 
\end{lemma}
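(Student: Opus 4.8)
The plan is to exploit the fact that all three functions $h_I$, $h_I^v$, and $\mathbbm{1}_I/\sqrt{|I|}$ are supported on $I$ and constant on each child $I_\pm$, hence they all lie in the two-dimensional space $V_I$ of functions on $I$ that are constant on $I_+$ and on $I_-$. Since $h_I^v$ changes sign across the midpoint of $I$ while $\mathbbm{1}_I/\sqrt{|I|}$ does not, the pair $\{h_I^v,\,\mathbbm{1}_I/\sqrt{|I|}\}$ is linearly independent and therefore a basis of $V_I$. Consequently there exist unique scalars $\alpha_I^v,\beta_I^v$ realizing the claimed decomposition, and the content of the lemma is the computation of these coefficients together with the two estimates.

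To extract $\alpha_I^v$ I would take the $L^2(v)$ inner product of both sides with $h_I^v$. The weighted Haar function satisfies $\langle \mathbbm{1}_I, h_I^v\rangle_v = \int_I h_I^v\,v\,dx = 0$ by its very construction, and $\langle h_I^v,h_I^v\rangle_v=1$, so the $\mathbbm{1}_I$ term drops out and $\alpha_I^v=\langle h_I, h_I^v\rangle_v$. A direct evaluation on $I_+$ and $I_-$, writing $a=v(I_+)$ and $b=v(I_-)$, gives $\alpha_I^v = \frac{2\sqrt{ab}}{\sqrt{|I|(a+b)}}$. To extract $\beta_I^v$ I would instead pair both sides with $\mathbbm{1}_I/\sqrt{|I|}$ in the \emph{unweighted} $L^2$ inner product: since $\langle h_I,\mathbbm{1}_I\rangle=0$ and $\langle \mathbbm{1}_I/\sqrt{|I|},\mathbbm{1}_I/\sqrt{|I|}\rangle=1$, one solves $0=\alpha_I^v\langle h_I^v,\mathbbm{1}_I/\sqrt{|I|}\rangle+\beta_I^v$, and substituting the value of $\alpha_I^v$ yields $\beta_I^v=(a-b)/(a+b)$.

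It then remains to verify the two bounds. For (i), since $\langle v\rangle_I = (a+b)/|I|$, we have $\alpha_I^v/\sqrt{\langle v\rangle_I} = 2\sqrt{ab}/(a+b)\le 1$ by the arithmetic--geometric mean inequality $2\sqrt{ab}\le a+b$, which gives $|\alpha_I^v|\le\sqrt{\langle v\rangle_I}$. For (ii), note $\Delta_I v=\langle v\rangle_{I_+}-\langle v\rangle_{I_-}=2(a-b)/|I|$ while $\langle v\rangle_I=(a+b)/|I|$, so $|\Delta_I v|/\langle v\rangle_I=2|a-b|/(a+b)$; hence $|\beta_I^v|=|a-b|/(a+b)=\tfrac12\,|\Delta_I v|/\langle v\rangle_I$, which is in fact half of the stated bound and a fortiori satisfies (ii).

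I do not expect a serious obstacle here: the argument is a two-dimensional linear-algebra computation and the only inequality used is AM--GM. The mildly delicate points are purely bookkeeping---keeping track of the normalizing factors $1/\sqrt{v(I)}$ and $1/\sqrt{|I|}$ and of the ratios $\sqrt{v(I_-)/v(I_+)}$ and $\sqrt{v(I_+)/v(I_-)}$ appearing in $h_I^v$---and, crucially, choosing the right inner product (weighted for $\alpha_I^v$, unweighted for $\beta_I^v$) so that in each case the other coefficient decouples cleanly and the computation collapses to a single term.
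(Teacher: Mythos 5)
Your proof is correct, and it follows essentially the same route as the paper: the paper records the exact values $\alpha_I^v=\sqrt{\langle v\rangle_{I_+}\langle v\rangle_{I_-}/\langle v\rangle_I}$ and $\beta_I^v=\Delta_I v/(2\langle v\rangle_I)$ (equation \eqref{eqn:alpha-beta}) and deduces (i) and (ii) from them, which is exactly what your computation in terms of $a=v(I_+)$, $b=v(I_-)$ produces, including the observation that $|\beta_I^v|$ is actually half the stated bound. Your extraction of the coefficients by pairing with $h_I^v$ in $L^2(v)$ and with $\mathbbm{1}_I/\sqrt{|I|}$ in unweighted $L^2$ is just a tidy way of solving the same $2\times 2$ linear system.
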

In fact,   one can compute exactly the values of $\alpha^v_I$ and $\beta_I^v$ and deduce estimates (i) and (ii) from them. More precisely,
\begin{equation}\label{eqn:alpha-beta}
\alpha^v_I = \sqrt{{\langle v\rangle_{I_+}\langle v\rangle_{I_-}}/{\langle v\rangle_I}},      \quad\quad\quad\beta^v_I= {\Delta_Iv}/{2\langle v\rangle_I}.
\end{equation}

 For the generalization of Lemma~\ref{WHB} to $\mathbb{R}^n$ and to spaces of homogeneous type (SHT) see \cite[Section 2.2]{Ch2} and \cite[Proposition 8.3.9]{We}, respectively.

\subsection{Weighted Maximal function}

For a weight $u$ and a function $f \in L^{1}_{loc}(\mathbb{R})$, we define 
 the \emph{weighted dyadic maximal function of} $f$ by
\begin{equation}
M^d_uf(x):=\sup_{I \in \mathcal{D} \,: \,x\in I} \frac{1}{u(I)}\int_{I}|f(y)|u(y)\, dy=\sup_{I \in \mathcal{D} \, : \, x\in I}   \langle |f| \rangle^u_I. \label{wei_dyadic_maximal}
\end{equation}
 Here the supremum is taken over all intervals $I \in \mathcal{D}$ which contains the point $x$, and recall that $\langle  f\rangle^u_I$
denotes the weighted average of $f$ with respect to the weight $u$. Note that when $u\equiv 1$,  we obtain the dyadic maximal function, denoted  by $M^d$.
 
 The  $L^p(u)$ bound for the weighted dyadic maximal function  $M_u^d$ follows at once from the celebrated $L^p$ maximal inequality for martingales due to Doob, the reader can check his 1953 monograph \cite{Do}. One only needs to  pass from probability spaces to   $\R$, or more generally to spaces of homogeneous type.
 We record the result here as we will use these bounds repeatedly,  you can find a proof in \cite{CrMPz}.
\begin{lemma}\label{prop:weightedM}
Let $ 1<p< \infty$ and $u$ be a weight defined on $\mathbb{R}$. Then, for every function $f \in L^p(u)$,
\begin{equation*}
\| M^d_u f \|_{L^p(u)}  \leq p' \| f \|_{L^p(u)}.
\end{equation*}
\end{lemma}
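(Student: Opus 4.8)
The plan is to prove the weighted weak-type $(1,1)$ inequality for $M^d_u$ with respect to the measure $u\,dx$, and then upgrade to the strong $L^p(u)$ bound via Marcinkiewicz interpolation, tracking the constant $p'$ carefully through the interpolation. First I would observe that $M^d_u$ is the dyadic maximal function associated to the measure $d\mu = u\,dx$, so the statement is really the classical dyadic maximal theorem for a general positive measure, where the crucial feature is that the measure-theoretic constants do not depend on $u$ because dyadic intervals have the nested structure. Concretely, I would fix $\lambda>0$ and consider the level set $\Omega_\lambda = \{x : M^d_u f(x) > \lambda\}$.

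The key step is the stopping-time (maximal) decomposition. For each $x\in\Omega_\lambda$ there is a dyadic interval $I\ni x$ with $\langle |f|\rangle^u_I > \lambda$; by maximality one selects the collection $\{I_j\}$ of maximal such dyadic intervals. By the nested property of $\mathcal{D}$ these maximal intervals are pairwise disjoint and their union is exactly $\Omega_\lambda$. On each selected $I_j$ one has the defining inequality
\[
u(I_j) \;<\; \frac{1}{\lambda}\int_{I_j}|f(y)|\,u(y)\,dy,
\]
and summing over the disjoint family yields the weak-type bound
\[
u(\Omega_\lambda) \;=\; \sum_j u(I_j) \;\leq\; \frac{1}{\lambda}\int_{\Omega_\lambda}|f(y)|\,u(y)\,dy \;\leq\; \frac{1}{\lambda}\,\|f\|_{L^1(u)}.
\]
This gives $\|M^d_u f\|_{L^{1,\infty}(u)} \leq \|f\|_{L^1(u)}$ with constant $1$. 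The $L^\infty(u)\to L^\infty(u)$ bound with constant $1$ is immediate since each average $\langle|f|\rangle^u_I$ is bounded by $\|f\|_{L^\infty(u)}$.

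To obtain the sharp constant $p'$ rather than a generic interpolation constant, I would not use black-box Marcinkiewicz but instead run the direct layer-cake argument: writing $\|M^d_uf\|_{L^p(u)}^p = p\int_0^\infty \lambda^{p-1} u(\Omega_\lambda)\,d\lambda$, truncating $f$ at each level as $f = f\mathbbm{1}_{\{|f|>\lambda/2\}} + f\mathbbm{1}_{\{|f|\leq\lambda/2\}}$, applying the weak-$(1,1)$ estimate to the first piece and the trivial $L^\infty$ bound to absorb the second, and then using Fubini to reduce to $\int_\R |f(y)|^p u(y)\,dy$ times an explicit Beta-type constant. Optimizing the truncation split produces exactly the factor $p' = p/(p-1)$, which is the standard sharp constant for the dyadic maximal operator. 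The main obstacle is purely bookkeeping: one must choose the truncation threshold correctly and carry the constant through the Fubini interchange so that it collapses to $p'$ rather than a larger interpolation constant; conceptually the argument is entirely standard and relies only on the disjointness of maximal dyadic stopping intervals, which is where the dyadic (as opposed to centered) nature of the operator does all the work.
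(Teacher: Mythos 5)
Your first half (the weak-type $(1,1)$ bound with constant $1$ via maximal dyadic stopping intervals) is fine, and in fact you proved more than you use: your displayed estimate has the integral over $\Omega_\lambda$, i.e.\ the \emph{refined} weak-type inequality $\lambda\, u(\Omega_\lambda) \le \int_{\Omega_\lambda}|f|\,u\,dx$. The genuine gap is in the second half: the truncation-and-optimize argument you propose cannot produce the constant $p'$. Truncating at height $c\lambda$ and applying the weak-$(1,1)$ bound to $f\mathbbm{1}_{\{|f|>c\lambda\}}$ gives, after the layer-cake/Fubini computation,
\begin{equation*}
\|M^d_u f\|_{L^p(u)}^p \;\le\; \frac{p}{(p-1)(1-c)\,c^{p-1}}\,\|f\|_{L^p(u)}^p ,
\end{equation*}
and the bracketed constant is minimized at $c=(p-1)/p$, where it equals $p\,(p')^p$. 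So the best this route yields is $p^{1/p}\,p'$ (and $2(p')^{1/p}$ for your specific choice $c=1/2$), which is strictly larger than $p'$ for every $p$. The claim that ``optimizing the truncation split produces exactly the factor $p'$'' is therefore false; no choice of truncation level closes this gap.

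The correct route to the sharp (Doob) constant $p'$ uses precisely the refined inequality you already have, with no truncation of $f$: by the layer-cake formula and Fubini,
\begin{equation*}
\|M^d_u f\|_{L^p(u)}^p \;\le\; p\int_0^\infty \lambda^{p-2}\int_{\{M^d_u f>\lambda\}}|f|\,u\,dx\,d\lambda
\;=\; p'\int_{\R}|f|\,(M^d_u f)^{p-1}\,u\,dx ,
\end{equation*}
then H\"older's inequality bounds the right side by $p'\,\|f\|_{L^p(u)}\|M^d_u f\|_{L^p(u)}^{p-1}$, and dividing gives the claim. Two technical points must be handled for this to be rigorous: (1) the division requires $\|M^d_u f\|_{L^p(u)}<\infty$, which one gets by first running the argument for the operator restricted to dyadic intervals of bounded scale (or by invoking your cruder $p^{1/p}p'$ bound) and then passing to the limit by monotone convergence; (2) maximal intervals with $\langle|f|\rangle^u_I>\lambda$ need not exist when $u(\R)<\infty$ (an increasing chain can have all averages above $\lambda$), so the stopping-time decomposition should likewise be done at bounded scales and then limits taken. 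Note also that the paper itself does not prove this lemma; it cites \cite{CrMPz}, where the argument is exactly the refined weak-type/H\"older scheme just described.
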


The proof of this estimate in SHT   can be found in \cite{Ka, HKa}.


\subsection{Carleson sequences} \label{S:Def_Carl}
Given a weight $v$, a  sequence of positive real numbers $\{\lambda_I\}_{I\in\mathcal{D}}$ is a \emph{$v$-Carleson sequence} if and only if 
there is a constant $C>0$ such that 
\begin{equation}
\sum_{I\in\mathcal{D}(J)}\lambda_I\leq Cv(J), \quad\quad \mbox{for all  $J\in\mathcal{D}$}\,.\label{CarlS}
\end{equation}

\noindent When $v\equiv 1$ almost everywhere, we say that the sequence is a
\emph{Carleson sequence}. The infimum among all constants $C>0$  that satisfy inequality (\ref{CarlS}) 
is called the \emph{intensity} of the $v$-Carleson sequence $\{\lambda_I\}_{I\in\mathcal{D}}\,.$
For instance, if the reader is familiar with the space of functions of dyadic bounded mean oscillation $BMO^d$, a function $b\in BMO^d$ if and only if $\{|\langle b,h_I\rangle|^2\}_{I\in\mathcal{D}}$
is a Carleson sequence with intensity $\|b\|^2_{BMO^d}\,,$ for a proof see \cite{P}.

We now introduce a useful lemma which will be used frequently throughout  this paper. You can find a proof in \cite{MoP}, it was originally stated  in \cite{NTV1}.

\begin{lemma}[Weighted Carleson's Lemma] \label{WCL}
Let $v$ be a  weight, then $\{\alpha_{I}\}_{I \in \mathcal{D}}$ is a $v$-Carleson sequence
with intensity $\mathcal{B}$ if and only if for all non-negative $v$-measurable functions  $F$ on the
line,
\begin{equation}\label{eqn:WCL}
\sum_{I \in \mathcal{D}} (\inf_{x \in I} F(x) )\, \alpha_{I} \leq \mathcal{B}
\int_{\mathbb{R}}F(x) \,v(x)\,dx.
\end{equation}
\end{lemma}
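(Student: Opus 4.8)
The plan is to prove both implications of the equivalence, the easy direction by testing the embedding against indicator functions and the substantive direction ``Carleson $\Rightarrow$ embedding'' by a layer-cake decomposition. In both cases the constant that appears will be exactly $\mathcal{B}$, which is what yields the clean ``intensity $\mathcal{B}$'' statement.

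For the easy direction, suppose \eqref{eqn:WCL} holds for every non-negative $v$-measurable $F$. Fix $J\in\mathcal{D}$ and apply the inequality to $F=\mathbbm{1}_J$. By the nested property of dyadic intervals, $\inf_{x\in I}\mathbbm{1}_J(x)$ equals $1$ when $I\subseteq J$ and $0$ otherwise, so the left-hand side collapses to $\sum_{I\in\mathcal{D}(J)}\alpha_I$ while the right-hand side is $\mathcal{B}\,v(J)$. This is precisely the Carleson condition \eqref{CarlS}, and it shows the embedding constant bounds the intensity.

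For the main direction, assume $\{\alpha_I\}_{I\in\mathcal{D}}$ is $v$-Carleson with intensity $\mathcal{B}$. I would first reduce to $\int_\R F\,v\,dx<\infty$, since otherwise \eqref{eqn:WCL} is trivial. Set $E_\lambda:=\{x\in\R:F(x)>\lambda\}$. Noting that the set of $\lambda$ for which $I\subseteq E_\lambda$ is, up to a single endpoint, the interval $(0,\inf_{x\in I}F(x))$, I can write the layer-cake identity $\inf_{x\in I}F(x)=\int_0^\infty\mathbbm{1}_{\{I\subseteq E_\lambda\}}\,d\lambda$. Substituting and interchanging the non-negative sum and integral by Tonelli gives
\[ \sum_{I\in\mathcal{D}}\big(\inf_{x\in I}F(x)\big)\,\alpha_I=\int_0^\infty\Big(\sum_{I\subseteq E_\lambda}\alpha_I\Big)\,d\lambda. \]
For fixed $\lambda$, I would decompose the dyadic intervals contained in $E_\lambda$ according to the maximal dyadic intervals $\{P_j\}$ inside $E_\lambda$; by the nested property these are pairwise disjoint and each $I\subseteq E_\lambda$ lies in exactly one $P_j$. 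Applying \eqref{CarlS} on each $P_j$ and then using disjointness,
\[ \sum_{I\subseteq E_\lambda}\alpha_I=\sum_j\sum_{I\in\mathcal{D}(P_j)}\alpha_I\le \mathcal{B}\sum_j v(P_j)\le \mathcal{B}\,v(E_\lambda). \]
Integrating in $\lambda$ and using $\int_0^\infty v(E_\lambda)\,d\lambda=\int_\R F\,v\,dx$ (layer-cake for the measure $v\,dx$) closes the estimate with the same constant $\mathcal{B}$.

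The main obstacle is the legitimacy of the maximal-interval decomposition: maximal dyadic intervals inside $E_\lambda$ fail to exist exactly when $E_\lambda$ contains a half-line, giving a nested increasing chain of dyadic intervals with no top. The reduction $\int_\R F\,v\,dx<\infty$ is what controls this, for it forces $v(E_\lambda)<\infty$ for almost every $\lambda$; when $v$ is regular (each half-line having infinite $v$-measure) this rules out half-lines in $E_\lambda$ and the decomposition is immediate, and in general one exhausts $E_\lambda$ by its bounded dyadic pieces and passes to the limit by monotone convergence. I would also note that $F$ is only assumed $v$-measurable, so the level sets $E_\lambda$ and the Tonelli interchange must be read with respect to the measure $v\,dx$.
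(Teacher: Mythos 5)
Your proof is correct and follows essentially the same route as the proof the paper points to in \cite{MoP} (originally \cite{NTV1}): the converse by testing \eqref{eqn:WCL} on $F=\mathbbm{1}_J$, and the main direction by the layer-cake identity $\inf_{x\in I}F(x)=\int_0^\infty\mathbbm{1}_{\{I\subseteq E_\lambda\}}\,d\lambda$, Tonelli, a decomposition of $\{I\in\mathcal{D}:I\subseteq E_\lambda\}$ into maximal disjoint dyadic intervals on which \eqref{CarlS} is applied, and finally $\int_0^\infty v(E_\lambda)\,d\lambda=\int_\R F\,v\,dx$. Your added discussion of when maximal dyadic intervals fail to exist (half-lines inside $E_\lambda$) and the exhaustion-by-bounded-pieces fix is a careful touch that the standard write-up glosses over, but it does not change the argument.
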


For a proof of an analogous result in SHT, see  \cite[Lemma 8.3.5]{We}.

\begin{remark}\label{remark2} Let $g \in L^1_{loc}(u)$, if $F(x)= \big (M^d_{u}(g)(x)\big )^p$  for $1\leq p<\infty$  and $\{\alpha_I\}_{I\in\mathcal{D}}$ is a $u$-Carleson sequence with intensity $\mathcal{B}$, then by the Weighted Carleson's Lemma (Lemma~\ref{WCL})  we conclude that
\[\sum_{I\in\mathcal{D}}\alpha_I |\langle g\rangle_I^{u}|^p 
 \leq  \sum_{I\in\mathcal{D}}\alpha_I \inf_{x\in I} \big (M^d_{u}(g)(x)\big )^p 
 \leq  B \| M^d_{u}(g)\|_{L^p(u)}^p \; \leq  \; (p')^p\mathcal{B} \|g \|^p_{L^p(u)}.\]
Here, in the first inequality, we used that  $|\langle g\rangle^u_I|\leq \langle |g| \rangle_I^{u} \leq M^d_{u}(g)(x)$ for all $x\in I$, by definition of the weighted maximal function. In the last inequality, we applied Lemma~\ref{prop:weightedM}. This is how we will be using this lemma for various choices of weight $u$ and function $g$.
\end{remark}




\section{Some useful weight propositions}\label{sec:weight-prop}

In this section, we present some useful lemmas and propositions about weights.
When we specialize the packing or Carleson condition (iii) in Theorem~\ref{thm:one-weight} for $u \equiv 1$ we have a condition that is equivalent to $w\in RH^d_2$. Proposition~\ref{prop:RHp} will prove this, not only for $p=2$, but for the more general case $p>1$. Moreover, when $w\equiv 1$ and $u=v$, Proposition~\ref{prop:Ap} will show that $u^{-1}\in A_2$ (if and only if $u\in A_2$) implies the dual Carleson condition  (iii). Also, Proposition~\ref{prop:Ap} proves that $u\in A_2^d$ implies the packing or Carleson condition (ii) in Theorem~\ref{thm:one-weight}.

\begin{proposition}\label{prop:RHp}  Let $1<p<\infty$. A weight $w$ lies in $RH^d_p$ if and only if there is a constant $C>0$ such that
\begin{equation}\label{eqn:RHp}
 \frac{1}{|I|} \sum_{J\in\mathcal{D}: J\subset I} |J| \frac{|\Delta_J(w^p)|^2}{\langle w\rangle_J^p} \leq C \langle w^p\rangle_I  \quad \mbox{for all $I\in\mathcal{D}$}, 
 \end{equation}
where we recall that $\Delta_Jw= \langle w\rangle_{J_+}-\langle w\rangle_{J_-}$.
\end{proposition}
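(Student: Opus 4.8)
The plan is to set $g:=w^p$ and translate \eqref{eqn:RHp} into a statement about the Haar coefficients of $g$. Since $h_J=|J|^{-1/2}(\mathbbm 1_{J_+}-\mathbbm 1_{J_-})$, a one-line computation gives $\langle g,h_J\rangle=\tfrac{\sqrt{|J|}}{2}\,\Delta_J g$, so that $|J|\,|\Delta_J(w^p)|^2=4\,|\langle g,h_J\rangle|^2$ and the left-hand side of \eqref{eqn:RHp} equals $\tfrac{4}{|I|}\sum_{J\subset I}|\langle g,h_J\rangle|^2/\langle w\rangle_J^p$, i.e. a square function of $g$ weighted scale-by-scale by $\langle w\rangle_J^{-p}$. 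The first observation I would record is that Jensen's inequality always yields $\langle w\rangle_J^p\le\langle w^p\rangle_J=\langle g\rangle_J$, while $w\in RH^d_p$ is \emph{exactly} the reverse comparison $\langle g\rangle_J\le[w]_{RH^d_p}^p\langle w\rangle_J^p$. Hence under $RH^d_p$ the two weights $\langle w\rangle_J^{-p}$ and $\langle g\rangle_J^{-1}$ are comparable uniformly in $J$, and \eqref{eqn:RHp} is equivalent to the intrinsic packing (Carleson) condition $\tfrac1{|I|}\sum_{J\subset I}|J|\,|\Delta_J g|^2/\langle g\rangle_J\lesssim\langle g\rangle_I$, which by Buckley's theorem \cite{Bu} characterizes membership $g\in A^d_\infty$.

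For the forward direction I would assume $w\in RH^d_p$ and proceed in three steps. First, by the dyadic Gehring lemma (valid dyadically, see \cite{AW}) $w\in RH^d_p$ self-improves to $RH^d_{ps}$ for some $s>1$; a standard Hölder estimate on subsets $E\subset J$ then places $g=w^p$ in $A^d_\infty$ (this is the usual $A_p$--$RH_p$ dictionary, cf. \cite{GaRu}). Second, Buckley's characterization gives the intrinsic packing bound $\tfrac1{|I|}\sum_{J\subset I}|J|\,|\Delta_J g|^2/\langle g\rangle_J\lesssim\langle g\rangle_I$; if one wants this self-contained it follows from the exact Bessel identity $\sum_{J\subseteq I}|\langle g,h_J\rangle|^2=\int_I g^2-|I|\langle g\rangle_I^2$ applied on a stopping family where $\langle g\rangle_J$ stays comparable to $\langle g\rangle_I$, summed over generations whose $g$-measure decays geometrically thanks to the reverse-Hölder self-improvement of $A^d_\infty$ weights. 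Third, I replace $\langle g\rangle_J^{-1}$ by $\langle w\rangle_J^{-p}$ at the cost of the factor $[w]_{RH^d_p}^p$ using the comparability from the previous paragraph, obtaining \eqref{eqn:RHp}.

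For the converse I would run the comparability backwards. Since $\langle w\rangle_J^p\le\langle g\rangle_J$ always, condition \eqref{eqn:RHp}, which carries the smaller denominator $\langle w\rangle_J^p$, immediately implies the intrinsic packing bound with denominator $\langle g\rangle_J$; by Buckley this places $g=w^p\in A^d_\infty$. It then remains to upgrade this membership to the precise exponent $p$, and here the $A^d_\infty$ log-average inequality does the work cleanly: for $g\in A^d_\infty$ one has $\langle g\rangle_J\lesssim\exp\langle\log g\rangle_J$, so $\langle w\rangle_J=\langle g^{1/p}\rangle_J\ge\exp\big(\tfrac1p\langle\log g\rangle_J\big)=\big(\exp\langle\log g\rangle_J\big)^{1/p}\gtrsim\langle g\rangle_J^{1/p}=\langle w^p\rangle_J^{1/p}$, which is exactly $w\in RH^d_p$. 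The exponent $p$ emerges automatically from the Jensen step applied to $g^{1/p}$, which is why the correct denominator in \eqref{eqn:RHp} is $\langle w\rangle_J^p$ rather than $\langle w^p\rangle_J$.

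The main obstacle is the single substantive analytic input shared by both directions: Buckley's packing characterization of $A^d_\infty$, equivalently the scale-by-scale control of the interaction between $\langle w^p\rangle_J$ and $\langle w\rangle_J^p$ that is the very content of $RH^d_p$ holding at every scale. Everything else is the dictionary relating $RH^d_p$, $A^d_\infty$, and their powers (Jensen, Gehring, the $A^d_\infty$ log inequality), together with the weighted Carleson Lemma~\ref{WCL} and the Bessel identity, which make the stopping-time estimate quantitative. The secondary care-point is keeping track of the two comparabilities $\langle w\rangle_J^p\le\langle g\rangle_J$ (Jensen, unconditional) versus $\langle g\rangle_J\le[w]_{RH^d_p}^p\langle w\rangle_J^p$ ($RH^d_p$), so as not to use the latter in the converse where it is not yet available.
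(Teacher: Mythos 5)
Your converse direction contains a genuine gap, and it is exactly the subtlety this paper is organized around: in the dyadic setting the class characterized by Buckley's packing condition is $RH^d_1$ (Lemma~\ref{lem:BuckleyRH1}), \emph{not} $A^d_\infty$, and these classes differ --- as recalled in Section~\ref{sec:weights}, $A^d_\infty\subsetneq RH^d_1$ because $A^d_\infty$ weights are dyadically doubling while $RH^d_1$ weights need not be \cite{Bu}. Your final step upgrades ``$g=w^p$ satisfies the intrinsic packing bound'' to the log-average inequality $\langle g\rangle_J\lesssim\exp\langle\log g\rangle_J$, but that inequality is a dyadic $A_\infty$-type condition which itself forces dyadic doubling: by Jensen, $\exp\langle\log g\rangle_J\le\big(\langle g\rangle_{J_+}\langle g\rangle_{J_-}\big)^{1/2}$, which combined with $\tfrac12\langle g\rangle_{J_\pm}\le\langle g\rangle_J\le C\exp\langle\log g\rangle_J$ gives $\langle g\rangle_{J_+}\approx\langle g\rangle_{J_-}$. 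Hence it cannot follow from the packing condition: a weight equal to $\epsilon$ on $J_-$ and $1$ on $J_+$ has bounded Buckley quantity $|\Delta_Jg|^2/\langle g\rangle_J$ at that scale but $\log\langle g\rangle_J-\langle\log g\rangle_J\to\infty$ as $\epsilon\to0$, and iterating such a construction across scales (Buckley's non-doubling $RH^d_1$ example) produces a weight satisfying the packing condition uniformly while failing the exponential inequality. So your chain ``packing $\Rightarrow g\in A^d_\infty\Rightarrow\langle g\rangle_J\lesssim\exp\langle\log g\rangle_J\Rightarrow w\in RH^d_p$'' breaks at the second arrow. The missing bridge is precisely the Katz--Pereyra lemma (Lemma~\ref{lem:KP-RHp}): $w^p\in RH^d_1$ if and only if $w\in RH^d_p$. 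This is the nontrivial dyadic-safe substitute for the continuous dictionary $A_\infty=RH_1$, and it is how the paper closes the converse: packing $\Rightarrow$ intrinsic packing by Jensen $\Rightarrow w^p\in RH^d_1$ by Buckley $\Rightarrow w\in RH^d_p$ by Katz--Pereyra.

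Your forward direction has the same conflation but is repairable: replace ``$A^d_\infty$'' by ``$RH^d_1$'' throughout and it becomes correct. Dyadic Gehring gives $w\in RH^d_{ps}$, H\"older gives $w^p\in RH^d_s\subset RH^d_1$, Buckley's Lemma~\ref{lem:BuckleyRH1} gives the intrinsic packing bound, and the $RH^d_p$ comparability $\langle w^p\rangle_J\le[w]_{RH^d_p}^p\langle w\rangle_J^p$ converts the denominator --- exactly the paper's argument, with your Gehring-plus-H\"older route amounting to a proof of the forward half of Lemma~\ref{lem:KP-RHp}. But the continuous facts you lean on in the converse, namely ``$RH_s\subset A_\infty$'' and the $A_\infty$ exponential characterization, are false dyadically, so the converse must be rewritten around Lemma~\ref{lem:KP-RHp} rather than around $A^d_\infty$.
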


To prove Proposition~\ref{prop:RHp} we will need two auxiliary  lemmas.

\begin{lemma}[Buckley's characterization of $RH^d_1$ \cite{Bu, BeRez}]\label{lem:BuckleyRH1} A weight $v$ lies in $RH^d_1$ if and only if there is a constant $C>0$ such that
\begin{equation}\label{eqn:BuckleyRH1}
\frac{1}{|I|} \sum_{J\in\mathcal{D}: J\subset I} |J| \frac{|\Delta_J v|^2}{\langle v\rangle_J} \leq C[v]_{RH^d_1}\langle v\rangle_I  \quad \mbox{for all $I\in\mathcal{D}$}.
\end{equation}
\end{lemma}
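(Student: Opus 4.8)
The plan is to characterize the $RH^d_1$ condition through the Jensen defect of the convex function $\phi(t)=t\log t$ and then telescope that defect across the dyadic tree. First I would rewrite the definition \eqref{def:RH1}: setting $\Phi(I):=\langle v\log v\rangle_I-\langle v\rangle_I\log\langle v\rangle_I$, a direct computation gives $\big\langle \tfrac{v}{\langle v\rangle_I}\log\tfrac{v}{\langle v\rangle_I}\big\rangle_I=\Phi(I)/\langle v\rangle_I$, so that $[v]_{RH^d_1}=\sup_{I\in\mathcal{D}}\Phi(I)/\langle v\rangle_I$. The lemma then reduces to the two-sided estimate
$$\Phi(I)\approx \frac{1}{|I|}\sum_{J\in\mathcal D:\,J\subseteq I}|J|\,\frac{|\Delta_J v|^2}{\langle v\rangle_J}\qquad\text{for every }I\in\mathcal D,$$
with constants independent of $v$ and $I$; feeding this into the supremum yields both implications and the quantitative bound \eqref{eqn:BuckleyRH1} simultaneously.

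For the telescoping I would record the one-step Jensen gap $G(J):=\tfrac12\big(\phi(\langle v\rangle_{J_+})+\phi(\langle v\rangle_{J_-})\big)-\phi(\langle v\rangle_J)\ge 0$. Since $\langle v\rangle_{J_\pm}=\langle v\rangle_J\pm\tfrac12\Delta_J v$ and $|J_\pm|=|J|/2$, multiplying by $|J|$ gives the purely algebraic identity $|J|G(J)=|J_+|\phi(\langle v\rangle_{J_+})+|J_-|\phi(\langle v\rangle_{J_-})-|J|\phi(\langle v\rangle_J)$, which telescopes over the generations below $I$:
$$\sum_{\substack{J\subseteq I\\ |J|>2^{-n}|I|}}|J|G(J)=\sum_{\substack{K\subseteq I\\ |K|=2^{-n}|I|}}|K|\phi(\langle v\rangle_K)-|I|\phi(\langle v\rangle_I).$$
Passing to the limit $n\to\infty$ is the delicate point (addressed below); the outcome I expect is the exact identity $\tfrac1{|I|}\sum_{J\subseteq I}|J|G(J)=\Phi(I)$.

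To compute $G(J)$ I would set $r:=\Delta_J v/(2\langle v\rangle_J)\in[-1,1]$ and factor out $\langle v\rangle_J$, obtaining $G(J)=\langle v\rangle_J\,\psi(r)$ with $\psi(r)=\tfrac12\big((1+r)\log(1+r)+(1-r)\log(1-r)\big)$. A short computation gives $\psi''(r)=(1-r^2)^{-1}\ge 1$, whence $\psi(r)\ge r^2/2$; and since $\psi(r)/r^2$ extends continuously to $[-1,1]$ with the positive value $\tfrac12$ at $0$, it is bounded above on that compact interval, so $\psi(r)\le C r^2$. As $\tfrac{|\Delta_J v|^2}{\langle v\rangle_J}=4\langle v\rangle_J r^2$, this gives $G(J)\approx \tfrac{|\Delta_J v|^2}{\langle v\rangle_J}$ with absolute constants. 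Combined with the telescoped identity this proves the displayed two-sided estimate, and hence the lemma; tracking the explicit constants (e.g.\ $\tfrac{|\Delta_J v|^2}{\langle v\rangle_J}\le 8\,G(J)$) recovers the form of \eqref{eqn:BuckleyRH1}.

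The main obstacle is justifying the limit in the telescoping identity, i.e.\ controlling the boundary sum $\sum_{|K|=2^{-n}|I|}|K|\phi(\langle v\rangle_K)=\int_I\phi(\langle v\rangle_{K(x,n)})\,dx$, where $K(x,n)$ is the generation-$n$ descendant of $I$ containing $x$. Here I would use convexity twice. The averages $\langle v\rangle_{K(\cdot,n)}$ are the dyadic conditional expectations of $v$, so Jensen together with the tower property gives $\int_I\phi(\langle v\rangle_{K(x,n)})\,dx\le\int_I\phi(v)\,dx$ for every $n$, supplying the upper bound $\sum_{J\subseteq I}|J|G(J)\le|I|\Phi(I)$. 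For the matching lower bound, $\langle v\rangle_{K(x,n)}\to v(x)$ a.e.\ by Lebesgue differentiation, and since $\phi\ge -1/e$ is bounded below, Fatou's lemma gives $\int_I\phi(v)\le\liminf_n\int_I\phi(\langle v\rangle_{K(x,n)})$, i.e.\ $|I|\Phi(I)\le\sum_{J\subseteq I}|J|G(J)$. These two inequalities force the exact identity (with both sides allowed to equal $+\infty$, in which case the $RH^d_1$ and the Carleson conditions fail together), so no uniform integrability hypothesis is needed. Every step is quantitative, so the comparison constants are absolute, matching the refinement of Buckley's theorem \cite{Bu} in \cite{BeRez}.
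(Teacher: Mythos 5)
Your proof is correct, but note that the paper contains no proof of Lemma~\ref{lem:BuckleyRH1} at all: it defers entirely to the literature, citing \cite{Bu} for the qualitative statement and \cite[Theorem II.1]{BeRez} for the quantitative one. What you have written is therefore a self-contained reconstruction of the argument behind those citations. Every step checks out: the rewriting $[v]_{RH^d_1}=\sup_I\Phi(I)/\langle v\rangle_I$ with $\Phi(I)=\langle v\log v\rangle_I-\langle v\rangle_I\log\langle v\rangle_I$ is a direct computation; the one-step identity $|J|G(J)=|J_+|\phi(\langle v\rangle_{J_+})+|J_-|\phi(\langle v\rangle_{J_-})-|J|\phi(\langle v\rangle_J)$ telescopes exactly as you claim; conditional Jensen gives the upper bound for the boundary terms while Fatou (legitimate since $\phi\ge -1/e$) together with dyadic Lebesgue differentiation gives the matching lower bound, so the identity $\frac1{|I|}\sum_{J\subseteq I}|J|G(J)=\Phi(I)$ holds with both sides possibly $+\infty$ and no uniform-integrability assumption is needed; and the two-sided bound $r^2/2\le\psi(r)\le Cr^2$ on $[-1,1]$ (one can take $C=\log 2$) converts $G(J)$ into $|\Delta_Jv|^2/\langle v\rangle_J$ with absolute constants, e.g.\ $|\Delta_Jv|^2/\langle v\rangle_J\le 8\,G(J)$, which yields \eqref{eqn:BuckleyRH1} with $C=8$. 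This Jensen-defect/telescoping scheme is essentially the Bellman-function argument underlying \cite{BeRez}; compared with the paper's bare citation, your route buys a self-contained proof, explicit absolute constants, and in fact an exact identity for the Carleson sum rather than merely a two-sided estimate. One reading point in your favor: as literally stated, the ``if'' direction of the lemma is vacuous when $[v]_{RH^d_1}=\infty$ (the right-hand side of \eqref{eqn:BuckleyRH1} is then infinite); your two-sided comparison proves the intended statement, namely that a packing condition with some finite constant $C'$ forces $v\in RH^d_1$ with $[v]_{RH^d_1}\lesssim C'$, which is exactly how the lemma is used in the proofs of Propositions~\ref{prop:RHp} and~\ref{prop:Ap}.
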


The quantitative result  (the fact that the constant on the right-hand-side is comparable to $[v]_{RH_1}$) can be found in \cite[Equation (II.2) in Theorem II.1]{BeRez}, see  \cite{Bu} for the qualitative result.

\begin{lemma}[Katz-Pereyra `99, Lemma 1]\label{lem:KP-RHp} let $1<p<\infty$. 
A weight $w$ lies in $RH^d_p$ if and only if $w^p$ lies in $RH^d_1$.
\end{lemma}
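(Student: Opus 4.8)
The plan is to recast both sides as statements about the single convex function
$G_I(s):=\log\langle w^s\rangle_I$, defined for $s\ge 0$ on each $I\in\mathcal{D}$. Since $s\mapsto\int_I w^s\,dx$ is log-convex (H\"older) and $\langle w^0\rangle_I=1$, each $G_I$ is convex with $G_I(0)=0$. The condition $w\in RH^d_p$ is, after taking logarithms in \eqref{def:RHp}, exactly
\[
\log[w]_{RH^d_p}=\sup_{I\in\mathcal{D}}\Big(\tfrac1p G_I(p)-G_I(1)\Big),
\]
and a direct computation rewrites the entropy in \eqref{def:RH1} for $v=w^p$ as
\[
\Big\langle \tfrac{v}{\langle v\rangle_I}\log\tfrac{v}{\langle v\rangle_I}\Big\rangle_I
=\frac{\langle v\log v\rangle_I}{\langle v\rangle_I}-\log\langle v\rangle_I
=pG_I'(p)-G_I(p),
\]
using $v\log v=p\,w^p\log w$ and $G_I'(s)=\langle w^s\log w\rangle_I/\langle w^s\rangle_I$. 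Both quantities are nonnegative by convexity and $G_I(0)=0$ (the chord and tangent comparisons), so the whole lemma becomes a comparison of the two suprema.

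For the direction $w^p\in RH^d_1\Rightarrow w\in RH^d_p$, I would argue per interval using the tangent line to $G_I$ at $p$: for every $s$, $G_I(s)\ge G_I(p)+G_I'(p)(s-p)$, and evaluating at $s=1$ gives $(p-1)G_I'(p)\ge G_I(p)-G_I(1)$. Substituting into the entropy expression yields
\[
pG_I'(p)-G_I(p)\ \ge\ \frac{p}{p-1}\Big(\tfrac1p G_I(p)-G_I(1)\Big),
\]
so $\log[w]_{RH^d_p}\le\frac{p-1}{p}\,[w^p]_{RH^d_1}$. The only regularity point to check is that $G_I$ is differentiable at $p$; this is free, since $w^p\in RH^d_1=\bigcup_{q>1}RH^d_q$ places $w^p$ in some $RH^d_{q_0}$, making $w^{pq_0}$ locally integrable and $s\mapsto\int_I w^s$ real-analytic on a neighbourhood of $p$.

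The reverse implication $w\in RH^d_p\Rightarrow w^p\in RH^d_1$ is the genuine obstacle, and I expect it to be the crux: the entropy quantity involves the derivative $G_I'(p)$, and the per-interval $RH^d_p$ data (the values $G_I(0),G_I(1),G_I(p)$) cannot control a derivative without information to the \emph{right} of $p$. The resolution is the dyadic self-improvement (Gehring) property $RH^d_p\Rightarrow RH^d_{p+\epsilon}$ for some $\epsilon>0$, which is valid dyadically, see \cite{AW}. Setting $\delta=\epsilon/p$, one checks directly from \eqref{def:RHp} and Jensen ($\langle w\rangle_I^p\le\langle w^p\rangle_I$) that $w\in RH^d_{p+\epsilon}$ forces $w^p\in RH^d_{1+\delta}$ with $[w^p]_{RH^d_{1+\delta}}\le[w]_{RH^d_{p+\epsilon}}^p$. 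Finally, using the elementary inequality $\log t\le t^{\delta}/\delta$ ($t>0$) with $t=v/\langle v\rangle_I$ and $v=w^p$ gives, uniformly in $I$,
\[
\Big\langle \tfrac{v}{\langle v\rangle_I}\log\tfrac{v}{\langle v\rangle_I}\Big\rangle_I
\ \le\ \frac1\delta\,\frac{\langle v^{1+\delta}\rangle_I}{\langle v\rangle_I^{1+\delta}}
\ \le\ \frac1\delta\,[v]_{RH^d_{1+\delta}}^{\,1+\delta},
\]
so $w^p\in RH^d_{1+\delta}\subset RH^d_1$. In short, the easy direction is pure convexity bookkeeping, while the hard direction is powered by the dyadic Gehring lemma feeding a clean $\log t\le t^\delta/\delta$ estimate; the main thing to be careful about is invoking self-improvement rather than attempting a one-sided, derivative-free bound that cannot exist.
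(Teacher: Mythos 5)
Your proposal is correct, but it cannot be compared line-by-line with ``the paper's proof'' because the paper does not prove Lemma~\ref{lem:KP-RHp} at all: it quotes it from \cite{KP} (Lemma 1 there), and the only indication of how the proof goes is the later remark that ``the Katz-Pereyra Lemma uses G\"ehring's self-improvement property of $RH_p^d$,'' citing \cite{AW} for the dyadic version. Your hard direction ($w\in RH^d_p\Rightarrow w^p\in RH^d_1$) matches that indication and, essentially, the Katz--Pereyra route: dyadic Gehring gives $w\in RH^d_{p+\epsilon}$, your computation $[w^p]_{RH^d_{1+\delta}}\le [w]_{RH^d_{p+\epsilon}}^p$ with $\delta=\epsilon/p$ (using Jensen, $\langle w\rangle_I^p\le\langle w^p\rangle_I$) is correct, and the pointwise bound $\log t\le t^{\delta}/\delta$ lands you in the entropy class \eqref{def:RH1}. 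Where you genuinely differ is the converse: the classical argument deduces from $w^p\in RH^d_1$ a reverse Jensen inequality $\langle w^p\rangle_I\le C\exp\langle\log w^p\rangle_I$ and then applies Jensen to conclude $\langle w^p\rangle_I\le C\langle w\rangle_I^p$, whereas you run a tangent-line argument on the log-moment function $G_I(s)=\log\langle w^s\rangle_I$, via the identity ``entropy $=pG_I'(p)-G_I(p)$'' and convexity, obtaining the clean quantitative bound $\log[w]_{RH^d_p}\le\tfrac{p-1}{p}[w^p]_{RH^d_1}$; this is a self-contained and arguably sharper alternative. One caveat worth flagging: to justify differentiability of $G_I$ at $p$ you invoke $RH^d_1=\bigcup_{q>1}RH^d_q$, which is itself a nontrivial self-improvement statement (the paper asserts it in Section~\ref{sec:weights}, so citing it is legitimate, but it means your ``easy'' direction is not purely elementary); this can be avoided by working with the left derivative $G_I'(p^-)$, which exists and equals $\langle w^p\log w\rangle_I/\langle w^p\rangle_I$ as soon as the entropy of $w^p$ is finite, and for which the supporting-line inequality $G_I(1)\ge G_I(p)+G_I'(p^-)(1-p)$ still holds by monotonicity of difference quotients.
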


Armed with these two results we can prove Proposition~\ref{prop:RHp}.

\begin{proof}[Proof of Proposition~\ref{prop:RHp}]
($\Rightarrow$) Assume $w\in RH^d_p$,  hence by Lemma~\ref{lem:KP-RHp} we conclude that $w^p\in RH^d_1$. By the definition of $RH^d_p$ we have that  $\langle w^p\rangle_I \langle w\rangle_I^{-p} \leq [w]_{RH^d_p}^p$. Hence multiplying and dividing by $\langle w^p\rangle_J$, we get that
\begin{eqnarray*}
\frac{1}{|I|} \sum_{J\in\mathcal{D}: J\subset I} |J| \frac{|\Delta_J(w^p)|^2}{\langle w\rangle_J^p} & \leq & [w]_{RH^d_p}^p \frac{1}{|I|} \sum_{J\in\mathcal{D}: J\subset I} |J| \frac{|\Delta_J(w^p)|^2}{\langle w^p\rangle_J}  \\& \leq & C[w]_{RH^d_p}^p [w^p]_{RH^d_1} \langle w^p\rangle_I,
\end{eqnarray*}
where the last inequality comes from Lemma~\ref{lem:BuckleyRH1} applied to $v=w^p\in RH^d_1$.

($\Leftarrow$) Assume now that the  packing condition \eqref{eqn:RHp} holds, then we will show that $w^p$ satisfies Buckley's $RH^d_1$ packing  condition \eqref{eqn:BuckleyRH1}. Finally,  we will use Lemma~\ref{lem:KP-RHp} to conclude that  $w\in RH^d_p$.  Indeed, 
\[ \frac{1}{|I|} \sum_{J\in\mathcal{D}: J\subset I} |J| \frac{|\Delta_J(w^p)|^2}{\langle w^p\rangle_J} \leq 
 \frac{1}{|I|} \sum_{J\in\mathcal{D}: J\subset I} |J| \frac{|\Delta_J(w^p)|^2}{\langle w\rangle_J^p}  \leq  C \langle w^p\rangle_I. \]
Here the first estimate holds since, by H\"older's inequality,  $\langle w^p\rangle_J^{-1} \leq  \langle w\rangle_J^{-p}$.  
\end{proof}

%

For this proposition to hold in spaces of homogeneous type (SHT), we will need Buckley's characterization of $RH_1^d$ and the Katz-Pereyra Lemma to hold in SHT.
The Katz-Pereyra Lemma uses G\"ehring's self-improvement property of $RH_p^d$, which does hold in SHT provided the spaces support a Lebesgue Differentiation Theorem \cite{AW}, the rest of the proof goes through in SHT. We have not been able to find a reference  for Buckley's characterization of $RH_1^d$ in SHT. That is an interesting question.
\begin{question} Is there an analogue of Buckley' characterization of $RH^d_1$ in spaces of homogeneous type?
\end{question}

We can prove an  $A^d_p$ version of Proposition~\ref{prop:RHp}.  First we state an prove the following  $A^d_p$ version of the Katz-Pereyra Lemma, that will suffice for our purposes.

\begin{lemma}\label{lem:KP-Ap}
If a weight $w$ lies in $A^d_p$ then $w^{\frac{-1}{p-1}}$ lies in  $RH^d_1$.  
\end{lemma}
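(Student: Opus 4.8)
The plan is to reduce the statement to the chain of inclusions between dyadic Muckenhoupt and Reverse H\"older classes already recorded in Section~\ref{sec:weights}. First I would invoke duality: by the discussion following the definition of the dual weight in Section~\ref{sec:weights}, if $w\in A^d_p$ then its dual weight $\sigma:=w^{-1/(p-1)}$ belongs to $A^d_{p'}$, with $[\sigma]_{A^d_{p'}}=[w]_{A^d_p}^{1/(p-1)}$. Thus the entire content of the lemma is to verify that $A^d_{p'}\subset RH^d_1$.

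Next I would use the inclusions $A^d_{p'}\subset A^d_\infty\subset RH^d_1$. The first is immediate from the definition $A^d_\infty:=\cup_{q>1}A^d_q$, since $p'>1$. The second is recorded in Section~\ref{sec:weights} and is due to Buckley \cite{Bu}. Combining the two gives $\sigma\in RH^d_1$, which is exactly the assertion of the lemma.

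If one prefers a self-contained argument that does not treat $A^d_\infty\subset RH^d_1$ as a black box, I would instead run through Gehring's dyadic self-improvement property: a dyadic $A^d_{p'}$ weight $\sigma$ satisfies a reverse H\"older inequality $\langle\sigma^{q}\rangle_I\lesssim\langle\sigma\rangle_I^{q}$ for some $q>1$ uniformly in $I$, that is $\sigma\in RH^d_q$, and then $RH^d_q\subset\cup_{r>1}RH^d_r=RH^d_1$. Since $t\mapsto t\log t$ is dominated by a constant multiple of $t^{q}$ for $t$ large, the finiteness of the $RH^d_q$ constant controls the entropy functional \eqref{def:RH1} defining $[\sigma]_{RH^d_1}$.

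The statement is purely qualitative, so there is no genuine obstacle here; the only point requiring care is that every class in sight is dyadic, so I must invoke the dyadic versions of Gehring's lemma and of Buckley's inclusion rather than their continuous counterparts. This distinction is exactly the one the paper emphasizes, since $A^d_\infty\subsetneq RH^d_1$ in the dyadic world whereas $A_\infty=RH_1$ in the classical setting.
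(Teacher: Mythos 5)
Your proposal is correct and follows essentially the same route as the paper: the paper's proof is precisely the two-step argument that $w\in A^d_p$ implies the dual weight $w^{-1/(p-1)}\in A^d_{p'}$, followed by the inclusion $A^d_{p'}\subset A^d_{\infty}\subset RH^d_1$ recorded in Section~\ref{sec:weights}. The additional Gehring-based argument you sketch is not needed and is not used by the paper, but your main line of reasoning matches it exactly.
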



\begin{proof}[Proof of Lemma~\ref{lem:KP-Ap}]
%
Given a weight  $w$ in $A^d_p$, we have that the dual weight $w^{\frac{-1}{p-1}}$ is in $A^d_{p'}$, with $\frac{1}{p}+\frac{1}{p'}=1$.
 Since $RH^d_1$ contains $A^d_{\infty}$, and 
 $A^d_{\infty}=\cup_{q>1}A^d_q$, then $RH^d_1$ contains $A^d_q$ for every $q>1$. In particular, $A^d_{p'}$ is contained in $RH^d_1$.
We conclude that  $w^{\frac{-1}{p-1}}$ lies in $RH^d_1$, as claimed.
\end{proof}
\begin{proposition}\label{prop:Ap}  If a weight $w$ lies in $A^d_p$ for $p>1$, then there is a constant $C>0$ such that
\begin{equation}\label{eqn:Carleson}  \frac{1}{|I|} \sum_{J\in\mathcal{D}: J\subset I} |J| \frac{|\Delta_J(w^{\frac{-1}{p-1}})|^2}{\langle w\rangle_J^{\frac{-1}{p-1}}} \leq C \langle w^{\frac{-1}{p-1}}\rangle_I,  \quad \mbox{for all $I\in \mathcal{D}$}.
\end{equation}
Moreover a partial converse holds, i.e., if the condition \eqref{eqn:Carleson} holds then $w^{\frac{-1}{p-1}}$ lies in $RH^d_1$ with $[w^{\frac{-1}{p-1}}]_{RH^d_1}\sim C$.
\end{proposition}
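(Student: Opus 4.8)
The plan is to reduce the claim to Buckley's characterization of $RH^d_1$ (Lemma~\ref{lem:BuckleyRH1}) applied to the dual weight $v:=w^{-1/(p-1)}$, exactly mirroring the proof of Proposition~\ref{prop:RHp}. The one new feature is that the denominator in \eqref{eqn:Carleson} is $\langle w\rangle_J^{-1/(p-1)}$ rather than $\langle v\rangle_J$, so I would bridge the two by comparing these quantities: the $A^d_p$ hypothesis supplies the comparison needed for the forward implication, while Jensen's inequality supplies a (weaker, oppositely oriented) comparison that suffices for the converse.

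For the forward direction, since $w\in A^d_p$ Lemma~\ref{lem:KP-Ap} gives $v\in RH^d_1$. The $A^d_p$ condition reads $\langle w\rangle_J\langle v\rangle_J^{p-1}\le [w]_{A^d_p}$; raising to the power $1/(p-1)$ yields $\langle w\rangle_J^{1/(p-1)}\le [w]_{A^d_p}^{1/(p-1)}\langle v\rangle_J^{-1}$. Since $1/\langle w\rangle_J^{-1/(p-1)}=\langle w\rangle_J^{1/(p-1)}$, this replaces the denominator termwise,
\[\frac{|\Delta_J v|^2}{\langle w\rangle_J^{-1/(p-1)}}\le [w]_{A^d_p}^{1/(p-1)}\,\frac{|\Delta_J v|^2}{\langle v\rangle_J}.\]
Summing over $J\subset I$, dividing by $|I|$, and applying Buckley's Lemma~\ref{lem:BuckleyRH1} to $v\in RH^d_1$ then gives \eqref{eqn:Carleson} with $C\sim [w]_{A^d_p}^{1/(p-1)}[v]_{RH^d_1}$.

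For the converse I would assume only \eqref{eqn:Carleson}, with no $A^d_p$ hypothesis available. Writing $w=v^{-(p-1)}$ and using that $x\mapsto x^{-(p-1)}$ is convex on $(0,\infty)$ for $p>1$, Jensen's inequality gives $\langle w\rangle_J=\langle v^{-(p-1)}\rangle_J\ge \langle v\rangle_J^{-(p-1)}$, hence $\langle w\rangle_J^{-1/(p-1)}\le \langle v\rangle_J$, that is $1/\langle v\rangle_J\le 1/\langle w\rangle_J^{-1/(p-1)}$. Consequently the left-hand side of Buckley's condition for $v$ is dominated termwise by the left-hand side of \eqref{eqn:Carleson}, which is $\le C\langle v\rangle_I$ by hypothesis; thus $v$ satisfies Buckley's $RH^d_1$ packing condition and $v\in RH^d_1$ with $[v]_{RH^d_1}\sim C$.

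I expect no serious obstacle here, as it is a bookkeeping variant of Proposition~\ref{prop:RHp}. The only point demanding care is tracking the orientation of the two comparison inequalities: the $A^d_p$ estimate is what lets me pass from $\langle w\rangle_J^{-1/(p-1)}$ up to $\langle v\rangle_J$ in the forward direction, whereas only the free Jensen inequality is available in the converse, and because it runs the opposite way it cannot be promoted to recover the full $A^d_p$ condition — which is exactly why only a partial converse is claimed.
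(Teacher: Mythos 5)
Your proof is correct and follows essentially the same route as the paper: the forward direction combines Lemma~\ref{lem:KP-Ap}, the termwise comparison $\langle v\rangle_J/\langle w\rangle_J^{-1/(p-1)}\le [w]_{A^d_p}^{1/(p-1)}$ from the $A^d_p$ condition, and Buckley's Lemma~\ref{lem:BuckleyRH1}, while the converse uses the oppositely oriented free comparison $\langle v\rangle_J^{-1}\le\langle w\rangle_J^{1/(p-1)}$ to land in Buckley's packing condition. The only cosmetic difference is that you derive this last comparison via Jensen's inequality for $x\mapsto x^{-(p-1)}$ where the paper cites H\"older; these are the same estimate.
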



\begin{proof}[Proof of Proposition~\ref{prop:Ap}]
Assume $w\in A^d_p$, then $\langle w\rangle_J \langle w^{\frac{-1}{p-1}}\rangle_J^{p-1} \leq [w]_{A^d_p}$  for all dyadic intervals $J$, and by  Lemma~\ref{lem:KP-Ap}, $w^{\frac{-1}{p-1}}\in RH^d_1$. Hence multiplying and dividing by $\langle w^{\frac{-1}{p-1}}\rangle_J$, we get,
\begin{eqnarray*}
 \frac{1}{|I|} \sum_{J\in\mathcal{D}: J\subset I} |J| \frac{|\Delta_J(w^{\frac{-1}{p-1}})|^2}{\langle w\rangle_J^{\frac{-1}{p-1}}} 
 &\leq & [w]_{A^d_p}^{\frac{1}{p-1}} \frac{1}{|I|} \sum_{J\in\mathcal{D}: J\subset I} |J| \frac{|\Delta_J(w^{\frac{-1}{p-1}})|^2}{\langle w^{\frac{-1}{p-1}}\rangle_J}  \\
 &\leq & C[w]_{A^d_p}^{\frac{1}{p-1}} [w^\frac{-1}{p-1}]_{RH^d_1}\langle w^{\frac{-1}{p-1}}\rangle_I.
 \end{eqnarray*}
Here the last inequality comes from Lemma~\ref{lem:BuckleyRH1} applied to $v=w^{\frac{-1}{p-1}}\in RH^d_1$.

Assume now that  the Carleson condition~\eqref{eqn:Carleson} holds, then we will show that $w^{\frac{-1}{p-1}}$ satisfies Buckley's $RH^d_1$ characterization condition. 
Indeed, 
\[ \frac{1}{|I|} \sum_{J\in\mathcal{D}: J\subset I} |J| \frac{|\Delta_J(w^{\frac{-1}{p-1}})|^2}{\langle w^{\frac{-1}{p-1}}\rangle_J}  \leq 
 \frac{1}{|I|} \sum_{J\in\mathcal{D}: J\subset I} |J| \frac{|\Delta_J(w^{\frac{-1}{p-1}})|^2}{\langle w\rangle_J^{\frac{-1}{p-1}}}  \leq  C \langle w^{\frac{-1}{p-1}}\rangle_I. \]
 Where the first estimate holds  since, by H\"older's inequality,  $\langle w^{\frac{-1}{p-1}}\rangle_J^{-1} \leq  \langle w\rangle_J^{\frac{1}{p-1}}$.   The second estimate by hypothesis~\eqref{eqn:Carleson}. By Lemma~\ref{lem:BuckleyRH1}, we conclude that $w^{\frac{-1}{p-1}}$ lies in $RH_1^d$. 
\end{proof}
To prove Proposition~\ref{prop:Ap} in spaces of homogeneous type (SHT) we will also need Buckley's characterization of $RH_1^d$ in SHT.  The main theorems proved in this paper will not need Proposition~\ref{prop:RHp} nor Proposition~\ref{prop:Ap}.

Note that if $w\in A_p^d$ then the dual weight $w^{-\frac{1}{p-1}}\in A_{p'}^d$. If we apply Proposition~\ref{prop:Ap} to the dual weight with $p$ replaced by $p'$, we conclude that for all $w\in A_p^d$,
\begin{equation}\label{eqn:dualCarleson}  \frac{1}{|I|} \sum_{J\in\mathcal{D}: J\subset I} |J| {|\Delta_J(w)|^2}{\langle w^{\frac{-1}{p-1}}\rangle_J^{p-1}} \leq C \langle w\rangle_I,  \quad \mbox{for all $I\in \mathcal{D}$}.
\end{equation}


%
%


\section{Sufficient conditions for two-weight estimates for the $t$-Haar Multiplier $T^t_{w,\sigma}$}\label{sufficient-conditions}

We are now able to start  proving Theorem~\ref{thm:two-weight}. In Section~\ref{sufficiency(i)-(iv)}, we will prove that conditions (i)-(iv) are sufficient for each $T^t_{w,\sigma}$ to be uniformly (on $\sigma$)  bounded from $L^2(u)$ into $L^2(v)$. In fact, for any choice of $\sigma$, we will prove in Theorem~\ref{thm:2weight-t-HaarMultiplier}, that conditions (i)-(iv)  guarantee the boundedness of the individual operator $T^t_{w,\sigma}$ from $L^2(u)$ into $L^2(v)$,  and the estimate does not depend on $\sigma$. We will need to assume the uniform boundedness (with respect to $\sigma$) of the $t$-Haar multipliers $T^t_{w,\sigma}$  when proving  the necessity of conditions (i)-(iv) in Section~\ref{sec:necessary}.


In Section~\ref{u=v=1}, we show that in the unweighted and unsigned case, that is  when $u=v\equiv 1$ and $\sigma\equiv 1$, we recover all known necessary and sufficient conditions for the boundedness of $T^t_w$  when $t\leq 0$ or when $t\geq 1/2$. We improve upon the known sufficient condition when $0<t<1/2$.

In Section~\ref{u=v}, we show that in the one weight case, that is when $u=v$, conditions (i)-(iii) imply condition (iv) when $t\leq 0$ or when $t\geq 1$. Hence we obtain a new one weight theorem that says that conditions (i)-(iii) are sufficient for the uniform (with respect to the choice of signs $\sigma$) boundedness of $T^t_{w, \sigma}$ on $L^2(u)$
when $t\leq 0$ or $t\geq 1$. Later on it will be clear these are also necessary conditions.
\subsection{The sufficiency of conditions (i)-(iv)}\label{sufficiency(i)-(iv)}
In this section we prove that conditions (i)-(iv) in Theorem~\ref{thm:2weight-t-HaarMultiplier} are sufficient for the boundedness of the $t$-Haar multiplier  $T^t_{w,\sigma}$ for each fixed $t\in \R$, $w$ a weight, and $\sigma$ a choice of signs.

\begin{theorem}[Two-weight theorem: sufficient conditions]\label{thm:2weight-t-HaarMultiplier}
Given $t\in \R$, $(u,v,w)$ a triple of weights, and a sequence of signs $\sigma$. 
Denote $\Delta_I w:= \langle w\rangle_{I_+} - \langle w\rangle_{I_-} $. The $t$-Haar multiplier $T_{w,\sigma}^t$ is bounded from $L^2(u)$ into $L^2(v)$ if the following hold:

\begin{itemize}
\item[{\rm (i)}]   A three-weight condition:  $\displaystyle{\;\; C_1:=\sup_{I\in\mathcal{D}} \frac {\langle u^{-1}\rangle_I \langle vw^{2t}\rangle_I}{\langle w\rangle_I^{2t}} <\infty. }$   

\item[{\rm (ii)}] A  three weight Carleson condition:  for all $I\in \mathcal{D}$ there is a constant $C_2>0$ such that
\[ \frac{1}{|I|}  \sum_{J\in\mathcal{D}: J\subset I} |J| \frac{\langle vw^{2t}\rangle_J |\Delta_J (u^{-1})|^2}{\langle w\rangle_J^{2t}} \leq C_2\, \langle u^{-1}\ \rangle_I.\]
In other words,  $\left \{ \mu_J:=|J| \frac{\langle vw^{2t}\rangle_J |\Delta_J (u^{-1})|^2}{\langle w\rangle_J^{2t}} \right \}_{J\in\mathcal{D}}$ is a $u^{-1}$-Carleson sequence.
\item[{\rm (iii)}] A dual three weight Carleson condition: for all $I\in \mathcal{D}$ there is a constant $C_3>0$ such that
\[ \frac{1}{|I|}  \sum_{J\in\mathcal{D}: J\subset I} |J| \frac{\langle u^{-1}\rangle_J |\Delta_J (vw^{2t})|^2}{\langle w\rangle_J^{2t}} \leq C_3\, \langle vw^{2t} \rangle_I.\]
In other words, $\left \{\rho_J:= |J| \frac{\langle u^{-1}\rangle_J |\Delta_J (vw^{2t})|^2}{\langle w\rangle_J^{2t}}\right \}_{J\in\mathcal{D}}$ is a $vw^{2t}$-Carleson sequence.
\item[{\rm (iv)}] The positive operator $P^t_{w,\lambda}$ is bounded from $L^2(u)$ into $L^2(v)$ with operator norm $C_4>0$, where
\[ P^t_{w,\lambda} f(x) := \sum_{I\in\mathcal{D}} \frac{w^t(x)}{|I|} \lambda_I \langle f\rangle_I \mathbbm{1}_I(x), \quad
\mbox{and}\;\; \lambda_I= \frac{|\Delta_I(u^{-1})|}{\langle u^{-1}\rangle_I}\frac{|\Delta_I (vw^{2t})|}{\langle vw^{2t}\rangle_I} \frac{|I|}{\langle w\rangle_I^t}.\] 
\end{itemize}
Moreover $\|T^t_{w,\sigma}\|_{L^2(u)\to L^2(v)} \lesssim \sqrt{C_1} + \sqrt{C_2} + \sqrt{C_3}+ C_4$.
\end{theorem}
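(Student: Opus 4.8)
The plan is to pass to the associated bilinear form and estimate it by the product of the $L^2(u)$ and $L^2(v)$ norms, with a constant independent of $\sigma$. By $L^2(v)$-duality it suffices to bound $|\langle T^t_{w,\sigma}f,g\rangle_v|$ uniformly over $\|g\|_{L^2(v)}=1$. Expanding the definition of $T^t_{w,\sigma}$ against the standard pairing gives
\[ \langle T^t_{w,\sigma}f,g\rangle_v=\sum_{I\in\mathcal D}\sigma_I\,\langle w\rangle_I^{-t}\,\langle f,h_I\rangle\,\langle gvw^t,h_I\rangle. \]
Since $|\sigma_I|=1$, passing to absolute values inside the sum removes all dependence on $\sigma$, so it is enough to control $\sum_I\langle w\rangle_I^{-t}|\langle f,h_I\rangle|\,|\langle gvw^t,h_I\rangle|$ by $(\sqrt{C_1}+\sqrt{C_2}+\sqrt{C_3}+C_4)\|f\|_{L^2(u)}\|g\|_{L^2(v)}$.

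Next I would normalize both pairings by the substitutions $F:=uf$ and $G:=w^{-t}g$, which turn the two standard pairings into weighted ones, $\langle f,h_I\rangle=\langle F,h_I\rangle_{u^{-1}}$ and $\langle gvw^t,h_I\rangle=\langle G,h_I\rangle_{vw^{2t}}$, while preserving norms: $\|F\|_{L^2(u^{-1})}=\|f\|_{L^2(u)}$ and $\|G\|_{L^2(vw^{2t})}=\|g\|_{L^2(v)}$. I then apply Lemma~\ref{WHB} to the weights $u^{-1}$ and $vw^{2t}$, writing $h_I=\alpha_I^{u^{-1}}h_I^{u^{-1}}+\beta_I^{u^{-1}}\mathbbm 1_I/\sqrt{|I|}$ and likewise for $vw^{2t}$. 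Inserting both expansions splits the bilinear sum into four pieces, according to whether the weighted-Haar ($\alpha$) or the averaging ($\beta$) term is selected on each side. The two identities that make this work are $\langle u^{-1}\rangle_I\langle F\rangle_I^{u^{-1}}=\langle f\rangle_I$ and $\langle vw^{2t}\rangle_I\langle G\rangle_I^{vw^{2t}}=\langle gvw^t\rangle_I$, together with the Bessel inequalities $\sum_I|\langle F,h_I^{u^{-1}}\rangle_{u^{-1}}|^2\le\|f\|_{L^2(u)}^2$ and $\sum_I|\langle G,h_I^{vw^{2t}}\rangle_{vw^{2t}}|^2\le\|g\|_{L^2(v)}^2$.

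I would then treat the four pieces in turn. For the $\alpha\alpha$ term the bounds $|\alpha_I^{u^{-1}}|\le\sqrt{\langle u^{-1}\rangle_I}$ and $|\alpha_I^{vw^{2t}}|\le\sqrt{\langle vw^{2t}\rangle_I}$ extract the factor $\big(\langle u^{-1}\rangle_I\langle vw^{2t}\rangle_I/\langle w\rangle_I^{2t}\big)^{1/2}\le\sqrt{C_1}$ from condition (i), and Cauchy–Schwarz in $I$ followed by the two Bessel inequalities closes it. For the $\alpha\beta$ term I pair the Bessel-summable $f$-side factor against the averages $\langle G\rangle_I^{vw^{2t}}$ by Cauchy–Schwarz; using $|\alpha_I^{u^{-1}}|^2\le\langle u^{-1}\rangle_I$ and $|\beta_I^{vw^{2t}}|\le|\Delta_I(vw^{2t})|/\langle vw^{2t}\rangle_I$, the weight attached to $|\langle G\rangle_I^{vw^{2t}}|^2$ is exactly $\rho_I$, which is a $vw^{2t}$-Carleson sequence by (iii), so Remark~\ref{remark2} bounds this piece by $2\sqrt{C_3}\,\|f\|_{L^2(u)}\|g\|_{L^2(v)}$. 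The $\beta\alpha$ term is symmetric and produces the sequence $\mu_I$ of (ii), giving the $\sqrt{C_2}$ contribution.

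The $\beta\beta$ term is the crux and I expect it to be the main obstacle. After using \eqref{eqn:alpha-beta} together with $|\beta_I^{u^{-1}}|\le|\Delta_Iu^{-1}|/\langle u^{-1}\rangle_I$, $|\beta_I^{vw^{2t}}|\le|\Delta_I(vw^{2t})|/\langle vw^{2t}\rangle_I$ and the averaging identities above, this term is dominated by $\tfrac14\sum_I\lambda_I\langle|f|\rangle_I\langle|g|vw^t\rangle_I$, with $\lambda_I$ precisely the sequence of (iv). Recognizing this as $\tfrac14\langle P^t_{w,\lambda}|f|,\,|g|\rangle_v$, condition (iv) bounds it by $\tfrac14 C_4\|f\|_{L^2(u)}\|g\|_{L^2(v)}$; summing the four estimates yields the claim. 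The genuine difficulty here is that this diagonal, paraproduct-type interaction cannot be absorbed by the Carleson conditions and is exactly what forces the auxiliary positive operator $P^t_{w,\lambda}$ into the hypotheses. The one subtlety in reducing to it is the passage to $|f|$ and $|g|$ via $|\langle f\rangle_I|\le\langle|f|\rangle_I$ and $|\langle gvw^t\rangle_I|\le\langle|g|vw^t\rangle_I$, which is also what secures uniformity in $\sigma$.
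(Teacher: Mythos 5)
Your proposal is correct and follows essentially the same route as the paper: duality, the Lemma~\ref{WHB} splitting with respect to $u^{-1}$ and $vw^{2t}$ into four sums, condition (i) with Bessel for the $\alpha\alpha$ term, conditions (ii)--(iii) via the weighted Carleson lemma and maximal function for the cross terms, and condition (iv) for the $\beta\beta$ term. The only differences are cosmetic (you normalize with $F=uf$, $G=w^{-t}g$ rather than testing on $fu^{-1}$, $gv$, and you recognize the $\beta\beta$ sum directly as $\tfrac14\langle P^t_{w,\lambda}|f|,|g|\rangle_v$ instead of invoking the dual formulation of (iv)), so no further comparison is needed.
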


\begin{proof}

\noindent By duality it suffices to prove that
$$|\langle T^t_{w,\sigma}(fu^{-1}), gv\rangle |\lesssim (\sqrt{C_1}+\sqrt{C_2}+\sqrt{C_3}+C_4)  \|f\|_{L^2(u^{-1})}\|g\|_{L^2(v)}.$$
Note that $fu^{-1}\in L^2(u)$ if and only if $f\in L^2(u^{-1})$, and similarly, $gv\in L^2(v^{-1})$ if and only if $g\in L^2(v)$, with equal norms in both cases. Also note that $L^2(v^{-1})$ is the dual of the space $L^2(v)$ under the $L^2$-pairing.

First observe that,
\begin{eqnarray} \langle T^t_{w,\sigma}(fu^{-1}), gv\rangle  & = & \left\langle \sum_{I\in\D} \sigma_I \frac{w^t}{\langle w\rangle_I ^t}\langle u^{-1}f,h_I\rangle h_I, gv \right\rangle \nonumber \\
&=& \sum_{I\in\D}  \frac{\sigma_I}{\langle w\rangle_I ^t}\langle u^{-1}f,h_I\rangle \langle  h_I, gvw^t\rangle. \label{t1}
\end{eqnarray} 

Here we have exchanged  inner product and summation. If the operator is bounded then we have convergence in $L^2(v)$, and the exchange is legal by the continuity of the inner product. However we are trying to show the operator is bounded. The calculations we are about to perform hold true for finite sums and the estimates are independent of the truncation parameters. At the end of the day, we will be able to take limits on the truncation parameters and deduce the boundedness of our operators from $L^2(u)$ into $L^2(v)$, given conditions (i)-(iv). We will let the reader fill in the details.


We will use the linear relation between  the weighted and unweighted Haar functions provided by Lemma~\ref{WHB} for weights $u^{-1}$ and $vw^{2t}$:
\begin{equation}
h_I(x)=\alpha_I^{u^{-1}} h_I^{u^{-1}}(x)+\beta_I^{u^{-1}}\frac{\mathbbm{1}_I(x)}{\sqrt{|I|}}\,, \label{rel:unbalaced_haar_u}
\end{equation}
where $|\alpha_I^{u^{-1}}|\leq \sqrt{\langle u^{-1}\rangle_I}$ and $|\beta_I^{u^{-1}}|\leq \frac{|\Delta_I (u^{-1})|}{\langle u^{-1}\rangle_I}$, and
\begin{equation}
h_I(x)=\alpha_I^{vw^{2t}} h_I^{vw^{2t}}(x)+\beta_I^{vw^{2t}}\frac{\mathbbm{1}_I(x)}{\sqrt{|I|}}\,, \label{rel:unbalaced_haar_vw2}
\end{equation}
where $|\alpha_I^{vw^{2t}}|\leq \sqrt{\langle vw^{2t}\rangle_I}$ and $|\beta_I^{vw^{2t}}|\leq \frac{|\Delta_I (vw^{2t})|}{\langle vw^{2t}\rangle_I}$.

Replace $h_I$ in (\ref{t1}) by (\ref{rel:unbalaced_haar_u}) and (\ref{rel:unbalaced_haar_vw2}) in the inner product involving $f$ and $g$ respectively, then  use the distributive property to break the sum into four sums:
\begin{equation}\label{eqn:decompositon4terma}
\langle T^t_{w,\sigma}(fu^{-1}), gv\rangle =   \Sigma_1(f,g) + \Sigma_2(f,g) + \Sigma_3(f,g) + \Sigma_4(f,g),
\end{equation}
where
\[ \Sigma_1(f,g)=\sum_{I\in\D}\frac{\sigma_I}{\langle w\rangle_I^t}\left\langle u^{-1}f, \alpha_I^{u^{-1}} h_I^{u^{-1}}\right\rangle \left\langle\alpha_I^{vw^{2t}} h_I^{vw^{2t}}, gvw^t \right\rangle ,\]

\[\Sigma_2(f,g)=\sum_{I\in\D}\frac{\sigma_I}{\langle w\rangle_I^t}\left\langle u^{-1}f, \beta_I^{u^{-1}}\frac{\mathbbm{1}_I}{\sqrt{|I|}}\right\rangle \left\langle \alpha_I^{vw^{2t}} h_I^{vw^{2t}}, gvw^t \right\rangle,\]

\[ \Sigma_3(f,g)=\sum_{I\in\D}\frac{\sigma_I}{\langle w\rangle_I^t}\left\langle u^{-1}f, \alpha_I^{u^{-1}}h_I^{u^{-1}}\right\rangle \left\langle \beta_I^{vw^{2t}}\frac{\mathbbm{1}_I}{\sqrt{|I|}}, gvw^t\right\rangle,\]

\[\Sigma_4(f,g)=\sum_{I\in\D}\frac{\sigma_I}{\langle w\rangle_I^t}\left\langle u^{-1}f, \beta_I^{u^{-1}}\frac{\mathbbm{1}_I}{\sqrt{|I|}}\right\rangle \left\langle \beta_I^{vw^{2t}}\frac{\mathbbm{1}_I}{\sqrt{|I|}}, gvw^t\right\rangle.\]

By the triangle inequality 
\[ |\langle T^t_{w,\sigma}(fu^{-1}), gv\rangle | \leq |\Sigma_1(f,g)| + |\Sigma_2(f,g)| + |\Sigma_3(f,g)| + |\Sigma_4(f,g)|.\]

%
%
%
%
%

We now estimate the terms separately. Each of the hypothesis: (i), (ii), (iii), and (iv), will be needed to prove the estimate for the corresponding terms: $\Sigma_1(f,g)$, $\Sigma_2(f,g)$, $\Sigma_3(f,g)$, and $\Sigma_4(f,g)$. Each will contribute to the operator norm $\sqrt{C_1}$, $\sqrt{C_2}$, $\sqrt{C_3}$, and $C_4$,  respectively.

 First, let us estimate $\Sigma_1(f,g)$.   Using the triangle inequality, the bounds on  $\alpha^{u^{-1}}_I$ and $\alpha_I^{vw^{2t}}$,  and the fact that $\langle uf,g\rangle=\langle f,g\rangle_u$, we have that,
\begin{align*}
 |\Sigma_1(f,g)| &\leq \sum_{I\in\D}\frac{1}{\langle w\rangle_I^t}\left|\left\langle u^{-1}f, \alpha_I^{u^{-1}} h_I^{u^{-1}}\right\rangle\right| \left|\left\langle\alpha_I^{vw^{2t}} h_I^{vw^{2t}}, gvw^t \right\rangle \right|\\
 &\leq \sum_{I\in\D}\frac{\sqrt{\langle u^{-1}\rangle_I}}{\langle w\rangle_I^t}\left|\left\langle u^{-1}f,h_I^{u^{-1}}\right\rangle\right|\sqrt{\langle vw^{2t}\rangle_I}\left|\left\langle h_I^{vw^{2t}},g w^{-t}vw^{2t}\right\rangle\right|\\
 &\leq \sum_{I\in\D}\frac{\sqrt{\langle u^{-1}\rangle_I}\sqrt{\langle vw^{2t}\rangle_I}}{\langle w\rangle_I^t}\left|\left\langle f,h_I^{u^{-1}}\right\rangle_{u^{-1}}\right|\left|\left\langle h_I^{vw^{2t}},g w^{-t}\right\rangle_{vw^{2t}}\right|.
\end{align*}
Now, we are able to use  condition (i), followed by the Cauchy-Schwarz inequality, and the fact that the $\{h_I^{u^{-1}}\}_{I\in\mathcal{D}}$ and $\{h_I^{vw^{2t}}\}_{I\in\mathcal{D}}$ are orthonormal systems  in $L^2(u^{-1})$  and $L^2(vw^{2t})$, to obtain,
\begin{align*}
|\Sigma_1(f,g)|  &\leq \sqrt{C_1}\,\left(\sum_{I\in\D}\left|\left\langle f,h_I^{u^{-1}}\right\rangle_{u^{-1}}\right|^2\right)^{1/2}\left(\sum_{I\in\D}\left|\left\langle h_I^{vw^{2t}},g w^{-t}\right\rangle_{vw^{2t}}\right|^2\right)^{1/2}\\
 &\leq \sqrt{C_1}\,\|f\|_{L^2(u^{-1})}\|gw^{-t}\|_{L^2(vw^{2t})}\\
 &=\sqrt{C_1}\,\|f\|_{L^2(u^{-1})}\|g\|_{L^2(v).}
\end{align*}
 
Second, let us estimate $\Sigma_2(f,g)$. Using the triangle inequality, the bounds on  $\beta^{u^{-1}}_I$ and $\alpha_I^{vw^{2t}}$, and the facts that $\langle uf,g\rangle=\langle f,g\rangle_u$, $\langle h,{\mathbbm{1}_I}/{\sqrt{|I|}}\rangle =  \sqrt{|I|}\langle h\rangle_I$, and $\langle hu\rangle_I=\langle h\rangle^u_I \langle u\rangle_I$, we have that, 
\begin{align*}
|\Sigma_2(f,g)|&=\sum_{I\in\D}\frac{1}{\langle w\rangle_I^t}\left|\left\langle u^{-1}f, \beta_I^{u^{-1}}\frac{\mathbbm{1}_I}{\sqrt{|I|}}\right\rangle\right|\left|\left\langle \alpha_I^{vw^{2t}} h_I^{vw^{2t}}, gvw^t \right\rangle\right|\\
&\leq \sum_{I\in\D}\frac{1}{\langle w\rangle_I^t}\frac{|\Delta_I (u^{-1})|\sqrt{|I|}}{\langle u^{-1}\rangle_I }\sqrt{\langle vw^{2t}\rangle_I}\;
\langle |u^{-1}f|\rangle_I\left|\left\langle gw^{-t}, h_I^{vw^{2t}}\right\rangle_{vw^{2t}}\right|\\
&\leq \sum_{I\in\D}\frac{1}{\langle w\rangle_I^t}|\Delta_I (u^{-1})|\sqrt{|I|}\sqrt{\langle vw^{2t}\rangle_I}\;
\langle |f|\rangle_I^{u^{-1}}\left|\left\langle gw^{-t}, h_I^{vw^{2t}}\right\rangle_{vw^{2t}}\right|.
\end{align*}
Next apply the Cauchy-Schwarz inequality, and we are set to use condition (ii)  and the Weighted Carleson Lemma (Lemma 4) with the $u^{-1}$-Carleson sequence $\mu_I=|I|\,{|\Delta_I u^{-1}|^2 \;\langle vw^{2t}\rangle_I }/{\langle w\rangle_I^{2t}}$. Finally, use estimates for the weighted maximal function (Proposition~\ref{prop:weightedM}) to obtain the desired bound,
\begin{align*}
|\Sigma_2(f,g)|&\leq \left(\sum_{I\in\D}\mu_I \inf_{x\in I} M_{u^{-1}}^2f(x)\right)^{1/2}\left(\sum_{I\in\D}\left|\left\langle gw^{-t}, h_I^{vw^{2t}}\right\rangle_{vw^{2t}}\right|^2\right)^{1/2}\\
&\leq \sqrt{C_2} \,\left(\int M^2_{u^{-1}}f(x)\,u^{-1}(x)\,dx\right)^{1/2}\|gw^{-t}\|_{L^2(vw^{2t})}\\
&\leq 2\sqrt{C_2} \,\|f\|_{L^2(u^{-1})}\|g\|_{L^2(v)}.
\end{align*}

Third, similarly let us estimate $\Sigma_3(f,g)$. Using the triangle inequality, the bounds on  $\alpha^{u^{-1}}_I$ and $\beta_I^{vw^{2t}}$, and the facts that $\langle uf,g\rangle=\langle f,g\rangle_u$, and $\langle hu\rangle_I=\langle h\rangle^u_I \langle u\rangle_I$, we get,
\begin{align*}
|\Sigma_3(f,g)|&=  \sum_{I\in\D}\frac{1}{\langle w\rangle_I^t}\left|\left\langle u^{-1}f, \alpha_I^{u^{-1}}h_I^{u^{-1}}\right\rangle\right|\left|\left\langle \beta_I^{vw^{2t}}\frac{\mathbbm{1}_I}{\sqrt{|I|}}, gvw^t \right\rangle\right|\\
&\leq \sum_{I\in\D}\frac{\sqrt{\langle u^{-1}\rangle_I}}{\langle w\rangle_I^t}\left|\left\langle u^{-1}f,h_I^{u^{-1}}\right\rangle\right|\frac{|\Delta_I (vw^{2t})|}{\langle vw^{2t}\rangle_I}\sqrt{|I|} \, \langle |g|vw^t\rangle_I\\
 &= \sum_{I\in\D}\left|\left\langle f,h_I^{u^{-1}}\right\rangle_{u^{-1}}\right|\frac{\sqrt{\langle u^{-1}\rangle_I}}{\langle w\rangle_I^t}\frac{|\Delta_I (vw^{2t})|}{\langle vw^{2t}\rangle_I}\sqrt{|I|} \, \langle |g|vw^t\rangle_I\\
&=\sum_{I\in\D}\left|\left\langle f,h_I^{u^{-1}}\right\rangle_{u^{-1}}\right|\frac{\sqrt{\langle u^{-1}\rangle_I}\,|\Delta_I (vw^{2t})|\sqrt{|I|}}{\langle w\rangle_I^t} \langle |g|w^{-t}\rangle_I^{vw^{2t}}.
\end{align*}
Now apply the Cauchy-Schwarz inequality, and we are set to use condition (iii) and  the Weighted Carleson Lemma (Lemma~\ref{WCL}) with the $vw^{2t}$-Carleson sequence $\rho_I= |I|\,{\langle u^{-1}\rangle_I\,|\Delta_I (vw^t)|^2}/{\langle w\rangle_I^{2t}}$, together with  estimates for the weighted maximal function (Lemma~\ref{prop:weightedM}). We have that,
\begin{align*}
|\Sigma_3(f,g)| &\leq \left(\sum_{I\in\D}\left|\left\langle f,h_I^{u^{-1}}\right\rangle_{u^{-1}}\right|^2\right)^{1/2}\left(\sum_{I\in\D}\rho_I\inf_{x\in I}M^2_{vw^{2t}}(gw^{-t})(x)\right)^{1/2}\\
&\leq \sqrt{C_3}\, \|f\|_{L^2(u^{-1})}\left(\int_{\R} M_{vw^{2t}}^2 (gw^{-t})(x)\, v(x)w^{2t}(x)\,dx\right)^{1/2}\\
&\leq 2 \sqrt{C_3} \,\|f\|_{L^2(u^{-1})}\|gw^{-t}\|_{L^2(vw^{2t})}\\
&=2 \sqrt{C_3}\, \|f\|_{L^2(u^{-1})}\|g\|_{L^2(v)}.
\end{align*}

Fourth, in order to estimate $\Sigma_4(f,g)$, we will  use the triangle inequality, and the bounds on $\beta^{u^{-1}}_I$ and $\beta^{vw^{2t}}_I$, to get,
\begin{align*}
|\Sigma_4(f,g)| &=\sum_{I\in\D}\frac{1}{\langle w\rangle_I^t}\left|\left\langle u^{-1}f, \beta_I^{u^{-1}}\frac{\mathbbm{1}_I}{\sqrt{|I|}}\right\rangle\right|\left|\left\langle \beta_I^{vw^{2t}}\frac{\mathbbm{1}_I}{\sqrt{|I|}}, gvw^t\right\rangle\right|\\  
&\leq \sum_{I\in\D}\frac{1}{\langle w\rangle_I^t}\frac{|\Delta_I (u^{-1})|\sqrt{|I|}}{\langle u^{-1}\rangle_I} \langle |u^{-1}f| \rangle_I
\frac{|\Delta_I (vw^{2t})|\sqrt{|I|}}{\langle vw^{2t}\rangle_I} \langle |gvw^t|\rangle_I\\
&=\sum_{I\in\D}\lambda_I \, \langle |f|u^{-1}\rangle_I \,  \langle |g|vw^t\rangle_I\\
&=\sum_{I\in\D}\lambda_I \,\langle u^{-\frac12}(|f|u^{-{\frac12}})\rangle_I\, \langle v^{\frac12}|g|v^{\frac12}w^t\rangle_I,
\end{align*}
where $\lambda_I=({|I|}/{\langle w\rangle_I^t})({|\Delta_I(u^{-1})|}/{\langle u^{-1}\rangle_I})({|\Delta_I (vw^{2t})|}/{\langle vw^{2t}\rangle_I})$.

We will use the boundedness of the positive operator $P^t_{w,\lambda}$ from  $L^2(u)$ into $L^2(v)$ to bound the expression above. Note that the boundedness of the positive operator is equivalent, by duality, to the following inequality,
$$|\langle v^{\frac12}P^t_{w,\lambda}(u^{-\frac12}f),g\rangle |\leq C_4\,\|f\|_{L^2(\R)}\|g\|_{L^2(\R)}\,.$$

Note that $f\in L^2(\R)$ if and only if $u^{-1/2}f\in L^2(u)$ and similarly $g\in L^2(\R)$ if and only if $v^{1/2}g\in L^2(v^{-1})$ the dual space of $L^2(v)$ under the $L^2$-pairing. The left hand side of the inequality can be written as:
\begin{align*}
\langle v^{\frac12}P^t_{w,\lambda}(u^{-\frac12}f),g\rangle &=\left\langle v^{\frac12}  \sum_{I\in\mathcal{D}}\frac{w^t}{|I|}\lambda_I \langle u^{-\frac12}f\rangle_I\, \mathbbm{1}_I,g\right\rangle\\
&=\left\langle w^{t}v^{\frac12}  \sum_{I\in\mathcal{D}}\frac{\lambda_I}{|I|} \langle u^{-\frac12}f\rangle_I \mathbbm{1}_I,g\right\rangle\\
&=\sum_{I\in\mathcal{D}}\lambda_I\, \langle u^{-\frac12}f\rangle_I\,\langle v^{\frac12}w^tg\rangle_I\,.
\end{align*}
Here the exchange of inner product and summation is legal as we are assuming the operator $P^t_{w,\lambda}$ is bounded.

%
%
We conclude that
\begin{equation}\label{eqn:positive-operator}
\sum_{I\in\mathcal{D}}\lambda_I\, \langle u^{-\frac12}f\rangle_I \, \langle v^{\frac12}w^tg\rangle_I\leq C_4\, \|f\|_{L^2(\R)}\|g\|_{L^2(\R)}\,.
\end{equation}
Therefore, replacing in \eqref{eqn:positive-operator} the function $f$ by $|f|u^{-1/2}$ and the function $g$ by $|g|v^{1/2}$ which are both in $L^2(\R)$, we have the estimate for $\Sigma_4(f,g)$,
\begin{eqnarray*}
|\Sigma_4(f,g)| &\leq &\sum_{I\in\D}\lambda_I \langle u^{-\frac12}(|f|u^{-\frac12})\rangle_I \,\langle v^{\frac12}w^t(v^{\frac12}|g|)\rangle_I \\
& \leq & C_4\,\|u^{-\frac12}f\|_{L^2(\R)}\|v^{\frac12}g\|_{L^2(\R)} 
\, = \, C_4\,\|f\|_{L^2(u^{-1})}\|g\|_{L^2(v)}\,.
\end{eqnarray*}
This completes the proof of Theorem~\ref{thm:2weight-t-HaarMultiplier}.
\end{proof}

\subsection{Reduction to the case $u=v\equiv 1$ and $\sigma\equiv 1$}\label{u=v=1}

When we apply Theorem~\ref{thm:2weight-t-HaarMultiplier} to the case $u=v\equiv 1$
and $\sigma\equiv 1$ we recover or improve upon all known results \cite{KP, Be, BeMP}. Namely,  we recover the  necessary and sufficient conditions on $w$  when $t> 1/2$ ($w\in RH_{2t}^d$) and when $t<0$ ($w\in A_{1-1/(2t)}^d$), and  we improve upon the known sufficient condition  in the  case $0< t \leq 1/2$  ($w^{2t}\in RH^d_1$). The case $t=0$  is the identity operator always bounded on $L^2(\R)$.

\begin{theorem}[The case $u=v\equiv1$]\label{thm:u=v=1}
Given $t\in\R$ and $w$ a weight.
\begin{itemize}
\item[{\rm (a)}] {\rm (\cite[Section 5.2]{KP})} For $t>1/2$ and $t\leq 0$ the $t$-Haar multiplier $T^t_w$ is bounded on $L^2(\R)$  if and only if the following $C_{2t}$ condition holds:
\[[w]_{C_{2t}}:=\sup_{I\in\mathcal{D}}\langle w^{2t} \rangle_I \langle w\rangle^{-2t}_I<\infty.\]
Moreover $\|T^t_w\|^2_{L^2(\R)\to L^2(\R)}\approx [w]_{C_{2t}}$.

\item[{\rm (b)}] For $0< t\leq 1/2$ if there is a constant $C>0$ such that 
\[\frac{1}{|I|}  \sum_{J\in\mathcal{D}: J\subset I} |J| \frac{ |\Delta_J (w^{2t})|^2}{\langle w\rangle_J^{2t}} \leq C_3\, \langle w^{2t} \rangle_I \quad\quad \mbox{for all $I\in\mathcal{D}$,}\]
if and only if $T^t_w$ is bounded on $L^2(\R)$. Moreover $\|T^t_w\|^2_{L^2(\R)\to L^2(\R)}\approx C_3$.
\end{itemize}
\end{theorem}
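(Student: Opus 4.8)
The plan is to specialize the sufficiency result Theorem~\ref{thm:2weight-t-HaarMultiplier} and the necessity half of Theorem~\ref{thm:two-weight} to the triple $u=v\equiv 1$ with $\sigma\equiv 1$, and then invoke the weight propositions of Section~\ref{sec:weight-prop} to rewrite the surviving hypotheses as classical $A^d_p$/$RH^d_p$ conditions. First I would record what conditions (i)--(iv) become when $u=v\equiv 1$. Since $u^{-1}\equiv 1$ we have $\Delta_J u^{-1}=\Delta_J 1=0$ for every $J$, so the Carleson sequence $\mu_J$ of (ii) vanishes and the multiplier sequence $\lambda_I$ of (iv) vanishes; thus (ii) and (iv) hold trivially with $C_2=C_4=0$ and $P^t_{w,\lambda}\equiv 0$. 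Condition (i) collapses to $C_1=\sup_I \langle w^{2t}\rangle_I\langle w\rangle_I^{-2t}=[w]_{C_{2t}}$, and condition (iii) collapses to the packing inequality displayed in part (b). Hence the whole machinery reduces to the single assertion that $[w]_{C_{2t}}<\infty$ together with the packing condition (iii) gives $\|T^t_w\|_{L^2(\R)\to L^2(\R)}\lesssim\sqrt{C_1}+\sqrt{C_3}$, while conversely boundedness forces (i) and (iii) by the necessity direction of Theorem~\ref{thm:two-weight}.

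For part (a) I would split into the two ranges. When $t>1/2$, set $p:=2t>1$; then condition (iii) is \emph{verbatim} the packing characterization of $RH^d_p$ in Proposition~\ref{prop:RHp}, while condition (i), $[w]_{C_{2t}}<\infty$, is by definition $w\in RH^d_{2t}$. So in this range (i) and (iii) are each equivalent to $w\in RH^d_{2t}$ and either one implies the other; feeding $w\in RH^d_{2t}$ into the reduced Theorem~\ref{thm:2weight-t-HaarMultiplier} yields boundedness, and the necessity of (i) gives the converse. When $t<0$, put $q:=1-1/(2t)>1$; a one-line computation from the definitions gives $[w]_{C_{2t}}=[w]_{A^d_q}^{-2t}$, so (i) is equivalent to $w\in A^d_q$. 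Since $-1/(q-1)=2t$, Proposition~\ref{prop:Ap} applied with this $q$ states precisely that $w\in A^d_q$ implies the packing condition (iii). Thus (i) alone, with the trivial (ii),(iv), yields boundedness, and necessity of (i) closes the equivalence; the degenerate case $t=0$ is the identity operator. In both ranges the operator-norm statement $\|T^t_w\|^2\approx [w]_{C_{2t}}$ follows by inserting $C_2=C_4=0$ into the norm equivalence of the main theorem and using that $C_1=[w]_{C_{2t}}$ and $C_3$ are both finite and governed by $[w]_{C_{2t}}$.

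For part (b), $0<t\le 1/2$ means $0<2t\le 1$, and now condition (i) is \emph{automatic}: by Jensen's inequality the concavity of $x\mapsto x^{2t}$ gives $\langle w^{2t}\rangle_I\le \langle w\rangle_I^{2t}$, whence $[w]_{C_{2t}}\le 1$. Consequently only (iii) carries information, and the reduced Theorem~\ref{thm:2weight-t-HaarMultiplier} shows that the packing condition (iii) alone is sufficient for $T^t_w$ to be bounded on $L^2(\R)$; the converse, that boundedness forces (iii), is the necessity of condition (iii) in Theorem~\ref{thm:two-weight}. At the endpoint $t=1/2$, i.e. $2t=1$, condition (iii) is exactly Buckley's $RH^d_1$ packing inequality of Lemma~\ref{lem:BuckleyRH1}, so the statement reads $T^{1/2}_w$ bounded if and only if $w\in RH^d_1$. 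The norm comparison $\|T^t_w\|^2\approx C_3$ again comes from the main theorem's norm equivalence, now using $C_2=C_4=0$ and $C_1\lesssim 1$.

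Almost all the genuine analytic content sits in the weight propositions of Section~\ref{sec:weight-prop}, which are already available, so I expect no serious obstacle in the sufficiency direction; what remains is essentially exponent bookkeeping. The one point requiring care is the index matching --- verifying $p=2t$ in Proposition~\ref{prop:RHp}, the identities $q=1-1/(2t)$, $-1/(q-1)=2t$ and $[w]_{C_{2t}}=[w]_{A^d_q}^{-2t}$ in Proposition~\ref{prop:Ap}, and the Jensen observation that pins $C_1\lesssim 1$ exactly on the range $0<2t\le 1$ where (i) stops carrying information. The subtler structural point is the necessity half and the sharp operator-norm equivalences: these rely on the necessity of conditions (i) and (iii) from Theorem~\ref{thm:two-weight}, established later in Section~\ref{sec:necessity(i)-(iii)}, so the argument is only complete once those are in hand (alternatively, the equivalences for $t>1/2$ and $t<0$ may be quoted directly from \cite{KP}).
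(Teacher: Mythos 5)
Your sufficiency direction coincides with the paper's: specialize Theorem~\ref{thm:2weight-t-HaarMultiplier} to $u=v\equiv 1$, observe that (ii) and (iv) vanish because $\Delta_J 1=0$, identify (i) with the $C_{2t}$ condition and (iii) with the packing condition, then use Proposition~\ref{prop:RHp} when $2t>1$, Proposition~\ref{prop:Ap} when $t<0$, and H\"older/Jensen when $0<2t\le 1$. Your index bookkeeping ($p=2t$; $q=1-1/(2t)$, $-1/(q-1)=2t$, $[w]_{C_{2t}}=[w]_{A^d_q}^{-2t}$) is correct and matches the paper.

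The gap is in the necessity half. You invoke ``the necessity direction of Theorem~\ref{thm:two-weight}'', but that direction (Theorem~\ref{thm:necessity(i)-(iii)}) assumes \emph{uniform} boundedness of the whole signed family $T^t_{w,\sigma}$ over all choices of $\sigma$: its proof averages over the signs and applies Khintchine's inequality, which is precisely what uniformity buys. Theorem~\ref{thm:u=v=1} assumes only that the single unsigned operator $T^t_w$ (that is, $\sigma\equiv 1$) is bounded on $L^2(\R)$, and boundedness of one Haar multiplier does not by itself yield uniform boundedness of all its signed variants; arguing that the two are equivalent once the theorem is proved would be circular. This matters most for part (b), where no citation to \cite{KP} can substitute (your fallback for part (a) is fine, and is what the paper does by attributing (a) to \cite{KP}). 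The paper closes necessity by direct testing with the single operator: for (i), test on Haar functions, using $T^t_w h_I=\langle w\rangle_I^{-t}w^t h_I$ so that $\|T^t_w h_I\|^2_{L^2(\R)}=\langle w^{2t}\rangle_I\langle w\rangle_I^{-2t}$ while $\|h_I\|_{L^2(\R)}=1$; for (iii) in part (b), test the adjoint $(T^t_w)^*$ on $\mathbbm{1}_I w^t$ and use Plancherel to get
\[
\sum_{J\in\mathcal{D}:\,J\subset I}\frac{|\langle w^{2t},h_J\rangle|^2}{\langle w\rangle_J^{2t}}\;\le\; \|T^t_w\|^2_{L^2(\R)\to L^2(\R)}\int_I w^{2t}(x)\,dx,
\]
which, since $|\langle w^{2t},h_J\rangle|^2=|J||\Delta_J(w^{2t})|^2$, is condition (iii) with $C_3=\|T^t_w\|^2$ after dividing by $|I|$. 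Replacing your appeal to Theorem~\ref{thm:two-weight} by these two testing computations repairs the argument; everything else in your outline stands.
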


\begin{proof}
When reducing to the known case $u=v\equiv 1$  in Theorem~\ref{thm:2weight-t-HaarMultiplier}, conditions (ii) and (iv) disappear as $\Delta_I 1=0$, and  conditions (i) and (iii) survive.
In fact condition (i) corresponds to the  $C_{2t}$-class of weights, namely those weights with the property that 
\begin{equation}\label{C2t}
\sup_{I\in\mathcal{D}}\langle w^{2t} \rangle_I \langle w\rangle^{-2t}_I<\infty,
\end{equation}
while condition (iii) becomes
\begin{equation}\label{eqn:Carleson2}
\frac{1}{|I|}  \sum_{J\in\mathcal{D}: J\subset I} |J| \frac{ |\Delta_J (w^{2t})|^2}{\langle w\rangle_J^{2t}} \leq C_3\, \langle w^{2t} \rangle_I.
\end{equation}
Note that condition~\eqref{C2t} can be deduced from the boundedness  of $T_w^t$  on $L^2(\R)$ by testing on Haar functions. More precisely $T^t_wh_I = \langle w\rangle_I^{-t}w^t h_I$, and $\| T^t_wh_I\|^2_{L^2(\R)}= \langle w^{2t}\rangle_I\langle w\rangle_I^{-2t}$, while $\|h_I\|_{L^2(\R)}=1$.
\\

\noindent {\it Proof of} (a). When $t\leq 0$ and $t>1/2$, we would like to ensure that \eqref{C2t} and \eqref{eqn:Carleson2}  are either equivalent conditions or that~\eqref{C2t} implies~\eqref{eqn:Carleson2},  as the $C_{2t}$ condition~\eqref{C2t}   is known to be necessary and sufficient for the boundedness of $T_w^t$ on $L^2(\R )$, see  \cite[Section 5.2]{KP}. 

When $t=0$,  the symbol is just 1 and $T_w^0$ is the identity operator in $L^2(\R )$ which is always bounded. Note that in this case~\eqref{C2t} and~\eqref{eqn:Carleson2} are both trivially true, in one case because $1<\infty$ and in the other case because $\Delta_J1=0$.

When $2t>1$,  condition~\eqref{C2t}   coincides with $RH^d_{2t}$ which, by Proposition~\ref{prop:RHp}, is equivalent to condition~\eqref{eqn:Carleson2}, where we are using $p=2t>1$.

When $t<0$, then condition~\eqref{C2t}   coincides with  $A^d_{1-\frac{1}{2t}}$.  But we  showed  that~\eqref{C2t}  implies~\eqref{eqn:Carleson2} in Proposition~\ref{prop:Ap},
where in this case  $p=1-1/(2t)>1$ and $-1/(p-1)=2t$.  
Therefore~\eqref{eqn:Carleson2} is a superflous hypothesis in the case when $u=v=1$ and $t<0$. 

Thus, in the cases $t\leq 0$ and $2t>1$, condition~\eqref{C2t}  is necessary and sufficient for boundedness  of $T^t_w$ on $L^2(\R)$, as~\eqref{C2t} is necessary and sufficient and the only other non-trivial condition~\eqref{eqn:Carleson2} is either equivalent to~\eqref{C2t}  or a consequence of~\eqref{C2t}. \\


\noindent{\it Proof of} (b). When $0<  2t\leq 1$, $C_{2t}$ condition~\eqref{C2t} is automatically true, hence condition~\eqref{eqn:Carleson2} will  be sufficient.  When $2t=1$, condition~\eqref{C2t} is clearly true as the left-hand-side is equal to one. When $0<2t< 1$, condition~\eqref{C2t}  holds, since $\langle w^{2t}\rangle_I\leq \langle w\rangle_I^{2t}$, by H\"older's inequality with $p=1/(2t)>1$.

Regarding the necessity, we can obtain condition~\eqref{eqn:Carleson2} by testing the adjoint operator $(T^t_w)^*$ on the functions $\mathbbm{1}_Iw^t$, where
$(T^t_w)^*f=\sum_{J\in\mathcal{D}}\langle fw^t,h_J\rangle / \langle w\rangle_J^t h_J.$
More precisely, $(T^t_w)^*(\mathbbm{1}_Iw^t)= \sum_{J\in\mathcal{D}}\langle \mathbbm{1}_Iw^{2t},h_J\rangle / \langle w\rangle_J^t h_J$,  by Plancherel's identity,
\[\|(T^t_w)^*(\mathbbm{1}_Iw^t)\|_{L^2(\R)}^2= \sum_{J\in\mathcal{D}}\frac{\langle |\mathbbm{1}_Iw^{2t},h_J\rangle|^2} {\langle w\rangle_J^{2t}},\]
hence noting that  $\|\mathbbm{1}_Iw^t\|_{L^2(\R)}^2=  \int_{I} w^{2t}(x)\,dx$ we get
\[ \sum_{J\in\mathcal{D}:J\subset I} \frac{ |\langle w^{2t},h_J\rangle|^2}{ \langle w\rangle_J^{2t}} \leq \|T^t_w\|_{L^2(\R)\to L^2(\R)}^2 \int_{I} w^{2t}(x)\,dx.\]
Noting that $|\langle w^{2t},h_J\rangle|^2= |J||\Delta_J(w^{2t})|^2$, condition~\eqref{eqn:Carleson2} now follows upon dividing by $|I|$ with $C_3=\|T^t_w\|_{L^2(\R)\to L^2(\R)}^2$.
\end{proof}

What was known before, in the case $0<2t\leq 1$, was that $w^{2t}\in RH^d_1$ implies boundedness of $T_w^t$ on $L^2(\R )$, see \cite[Section 5.2]{KP}. We warn the reader that in \cite{KP}, $A^d_{\infty}$ was defined to be the union of all $RH_p^d$ for $p>1$, that is what we  call  $RH_1^d$ in this paper.

When $2t=1$, condition~\eqref{eqn:Carleson2} is Buckley's $RH^d_1$ condition for $w$ (Lemma~\ref{lem:BuckleyRH1}).

When $0<2t< 1$,  if $w^{2t}\in RH^d_1$ then~\eqref{eqn:Carleson2} holds because
\[  \frac{1}{|I|}  \sum_{J\in\mathcal{D}: J\subset I} |J| \frac{ |\Delta_J (w^{2t})|^2}{\langle w\rangle_J^{2t}} \leq  \frac{1}{|I|}  \sum_{J\in\mathcal{D}: J\subset I} |J| \frac{ |\Delta_J (w^{2t})|^2}{\langle w^{2t}\rangle_J}  \leq [w^{2t}]_{RH^d_1} \langle w^{2t} \rangle_I.\]
Where the last inequality is by Lemma~\ref{lem:BuckleyRH1} applied to $w^{2t}$ assumed to be in $RH^d_1$. In this case condition~\eqref{eqn:Carleson2} is a sufficient condition for the boundedness of $T^t_w$ that improves upon the priorly known sufficient condition $w^{2t}\in RH_1^d$.
\begin{question} Does there exist a  weight $w$ and some $0<t\leq 1/2$  for which \eqref{eqn:Carleson2} holds but $w^{2t}$ is not in $RH^d_1$? This will confirm that condition~\eqref{eqn:Carleson2} is indeed a weaker condition than $RH^d_1$. 
\end{question}

%
%


%
%
%

\subsection{Reduction to the one weight case $u=v$}\label{u=v}

In the case when $u=v$,  condition (i) in Theorem~\ref{thm:2weight-t-HaarMultiplier} becomes
\begin{equation}\label{u=vCondition(i)}
\sup_{I\in\mathcal{D}} \frac {\langle u^{-1}\rangle_I \langle uw^{2t}\rangle_I}{\langle w\rangle_I^{2t}} <\infty. 
\end{equation}
The reverse of condition (i), namely
\begin{equation}\label{reverseCondition(i)}
\langle w\rangle_I^{2t} \leq  \langle u^{-1}\rangle_I \langle uw^{2t}\rangle_I,
\end{equation}
holds by H\"older's inequality when $t\geq 1$ and when $t\leq 0$,
but this argument does not work  when $0<t<1$. 
With \eqref{reverseCondition(i)} one can verify that (i), (ii) and (iii) imply (iv). So conditions (i)-(iii) are sufficient for the boundedness of $T^t_w$ on $L^2(u)$ when $t\leq 0$ and when $t\geq 1$, as we will show in the proof of Theorem~\ref{thm:1weight-t-HaarMultiplier}.

When $t=0$,  the $t$-Haar multiplier is $T_{\sigma}$, the martingale transform. If in addition $u=v$, then condition~(i) is $u\in A_2^d$, and conditions~(ii) and~(iii) reduce to $u^{-1}\in RH^d_1$ and $u\in RH^d_1$, respectively. To see why, we use that  $\langle u\rangle_I \sim 1/\langle u^{-1}\rangle_I$ for $u\in A_2^d$, and Buckley's characterization of $RH_1^d$ (Lemma~\ref{lem:BuckleyRH1}). If  $u\in A^d_2$ then both $u$ and $u^{-1}$ belong to $RH_1^d$. 
 In other words, in this case, condition~(i) implies conditions~(ii) and~(iii).  Theorem~\ref{thm:1weight-t-HaarMultiplier} recovers the known one weight results for the martingale transform \cite{TV, Wi}, i.e., the martingale transform is uniformly bounded on $L^2(u)$ if and only if $u\in A_2^d$.

\begin{theorem}[One-weight theorem: sufficient conditions]\label{thm:1weight-t-HaarMultiplier}
Given $t\leq 0$ or $t\geq 1 $ and $w$ a weight, then $T_{w,\sigma}^t$ is bounded from $L^2(u)$ into $L^2(u)$ for each choice of signs $\sigma$ if the following hold:
\begin{itemize}
\item[{\rm (i)}]   A two-weight condition:  $\displaystyle{\;\; C_1:=\sup_{I\in\mathcal{D}} \frac {\langle u^{-1}\rangle_I \langle uw^{2t}\rangle_I}{\langle w\rangle_I^{2t}} <\infty.} $   

\item[{\rm (ii)}] A two weight Carleson condition: there is $C_2>0$ such that for all $I\in\mathcal{D}$ 
\[ \frac{1}{|I|}  \sum_{J\in\mathcal{D}: J\subset I} |J| \frac{\langle uw^{2t}\rangle_J |\Delta_J (u^{-1})|^2}{\langle w\rangle_J^{2t}} \leq C_2 \langle u^{-1}\ \rangle_I.\]
In other words, $\left \{\mu_I:=|J| {\langle uw^{2t}\rangle_J |\Delta_J (u^{-1})|^2}/{\langle w\rangle_J^{2t}}\right \}_{I\in\mathcal{D}}$ is a $u^{-1}$-Carleson sequence.

\item[{\rm (iii)}] A dual two weight Carleson condition: there is $C_3>0$ such that for all $I\in\mathcal{D}$ 

\[ \frac{1}{|I|}  \sum_{J\in\mathcal{D}: J\subset I} |J| \frac{\langle u^{-1}\rangle_J |\Delta_J (uw^{2t})|^2}{\langle w\rangle_J^{2t}} \leq C_3 \langle uw^{2t} \rangle_I.\]
In other words, $\left \{\rho_I:= |J| {\langle u^{-1}\rangle_J |\Delta_J (uw^{2t})|^2}/{\langle w\rangle_J^{2t}}\right \}_{I\in\mathcal{D}} $ is a  $uw^{2t}$-Carleson sequence.

\end{itemize}
Moreover $\|T^t_{w,\sigma}f\|_{L^2(u)} \lesssim \sqrt{C_1} + \sqrt{C_2} + \sqrt{C_3} + \sqrt{C_2C_3}$,
\end{theorem}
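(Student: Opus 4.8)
The plan is to reuse, almost verbatim, the four-term decomposition from the proof of Theorem~\ref{thm:2weight-t-HaarMultiplier} with $v$ replaced by $u$. Writing $\langle T^t_{w,\sigma}(fu^{-1}),gu\rangle = \Sigma_1(f,g)+\Sigma_2(f,g)+\Sigma_3(f,g)+\Sigma_4(f,g)$ via the weighted-Haar expansions for $u^{-1}$ and $uw^{2t}$, the estimates for $\Sigma_1$, $\Sigma_2$, $\Sigma_3$ are exactly those already established there; they invoke hypotheses (i), (ii), (iii) respectively and contribute $\sqrt{C_1}$, $\sqrt{C_2}$, $\sqrt{C_3}$ to the operator norm. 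The entire content of the one-weight reduction is therefore to remove the positive-operator hypothesis (iv): I would show that for $t\le 0$ or $t\ge 1$ the term $\Sigma_4$ is controlled directly by (ii) and (iii), accounting for the extra summand $\sqrt{C_2C_3}$.

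The mechanism is the reverse of condition~(i), inequality~\eqref{reverseCondition(i)}, namely $\langle w\rangle_I^{2t}\le \langle u^{-1}\rangle_I\langle uw^{2t}\rangle_I$ for every $I\in\mathcal{D}$. I would prove it in two steps. Factoring $w^t=(u^{-1})^{1/2}(uw^{2t})^{1/2}$ and applying the Cauchy--Schwarz inequality to the Lebesgue average gives $\langle w^t\rangle_I^2\le \langle u^{-1}\rangle_I\langle uw^{2t}\rangle_I$. Jensen's inequality then gives $\langle w\rangle_I^t\le \langle w^t\rangle_I$, since $x\mapsto x^t$ is convex on $(0,\infty)$ precisely when $t\le 0$ or $t\ge 1$; chaining the two yields the claim. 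It is exactly this convexity threshold that forces the restriction $t\notin(0,1)$ in the statement: for $0<t<1$ the map $x\mapsto x^t$ is concave, Jensen reverses, and the argument breaks down.

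With \eqref{reverseCondition(i)} available I would bound $\Sigma_4$. From the proof of Theorem~\ref{thm:2weight-t-HaarMultiplier} (taking $v=u$ and simplifying $\langle |f|u^{-1}\rangle_I=\langle u^{-1}\rangle_I\langle|f|\rangle_I^{u^{-1}}$ and $\langle|g|uw^t\rangle_I=\langle uw^{2t}\rangle_I\langle|g|w^{-t}\rangle_I^{uw^{2t}}$) one has
\[
|\Sigma_4(f,g)|\le \sum_{I\in\mathcal{D}} |\Delta_I u^{-1}|\,|\Delta_I(uw^{2t})|\,\frac{|I|}{\langle w\rangle_I^t}\,\langle|f|\rangle_I^{u^{-1}}\,\langle|g|w^{-t}\rangle_I^{uw^{2t}}.
\]
The key observation is that the geometric factor is dominated by $\sqrt{\mu_I\rho_I}$, where $\mu_I$ and $\rho_I$ are the Carleson sequences of (ii) and (iii): a direct computation gives
\[
|\Delta_I u^{-1}|\,|\Delta_I(uw^{2t})|\,\frac{|I|}{\langle w\rangle_I^t}
=\sqrt{\mu_I\rho_I}\,\frac{\langle w\rangle_I^t}{\sqrt{\langle u^{-1}\rangle_I\langle uw^{2t}\rangle_I}}
\le \sqrt{\mu_I\rho_I},
\]
the last inequality being precisely \eqref{reverseCondition(i)}. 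A Cauchy--Schwarz splitting of the $I$-sum into a $\mu$-factor and a $\rho$-factor, followed by the Weighted Carleson Lemma~\ref{WCL} together with the weighted maximal-function bound (as packaged in Remark~\ref{remark2}) applied to each factor --- the first with weight $u^{-1}$ and function $f$, the second with weight $uw^{2t}$ and function $gw^{-t}$ --- then gives $|\Sigma_4(f,g)|\lesssim \sqrt{C_2C_3}\,\|f\|_{L^2(u^{-1})}\|g\|_{L^2(u)}$. Summing the four contributions yields $\|T^t_{w,\sigma}\|_{L^2(u)\to L^2(u)}\lesssim \sqrt{C_1}+\sqrt{C_2}+\sqrt{C_3}+\sqrt{C_2C_3}$.

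I expect the only genuine obstacle to be isolating the reverse inequality \eqref{reverseCondition(i)} and pinning down the exact convexity range; everything after the pointwise domination of the geometric factor by $\sqrt{\mu_I\rho_I}$ is a routine repetition of the Carleson-lemma mechanism already used for $\Sigma_2$ and $\Sigma_3$. I would also remark that this confirms hypothesis (iv) is superfluous in the one-weight range $t\notin(0,1)$, so no independent verification of the boundedness of $P^t_{w,\lambda}$ is needed.
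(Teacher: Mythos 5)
Your proposal is correct and takes essentially the same approach as the paper: the paper establishes the reverse of condition (i) for $t\le 0$ or $t\ge 1$ by the same H\"older/Jensen-plus-Cauchy--Schwarz argument, deduces the pointwise bound $\lambda_I \le \sqrt{\mu_I\rho_I}\,/\,(\langle u^{-1}\rangle_I\langle uw^{2t}\rangle_I)$, and then runs exactly your Cauchy--Schwarz/weighted-Carleson-lemma estimate. The only difference is organizational --- the paper packages that estimate as a proof that (i)--(iii) imply hypothesis (iv) of Theorem~\ref{thm:2weight-t-HaarMultiplier} (boundedness of $P^t_{w,\lambda}$ on $L^2(u)$ with norm $\lesssim\sqrt{C_2C_3}$) and then invokes that theorem, whereas you bound $\Sigma_4$ directly in the four-term decomposition; the underlying computation is identical.
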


\begin{proof} By Theorem~\ref{thm:2weight-t-HaarMultiplier}, suffices to show that 
conditions (i)-(iii) imply that the positive operator $P^t_{w,\lambda}$ is bounded on $L^2(u)$, where
\[ P^t_{w,\lambda} f(x) := \sum_{I\in\mathcal{D}} \frac{w^t(x)}{|I|} \lambda_I \langle f\rangle_I \mathbbm{1}_I(x), \]
and 
$\displaystyle{\lambda_I= \frac{|\Delta_I(u^{-1})|}{\langle u^{-1}\rangle_I}\frac{|\Delta_I (uw^{2t})|}{\langle uw^{2t}\rangle_I} \frac{|I|}{\langle w\rangle_I^t}= \mu_I^{1/2}\rho_I^{1/2} {\langle w\rangle_I^t}{\langle u^{-1}\rangle_I^{-3/2} \langle uw^{2t}\rangle_I^{-3/2}}.}$\\

First, note that when $t\geq 1$ and when $t\leq 0$, by H\"older's inequality, it holds that $\langle w\rangle^{t}_I \leq \langle w^t\rangle_I$ (however this is no longer true when $0<t<1$). Therefore in this case the reverse of condition (i) holds by the Cauchy-Schwarz inequality, namely:
\[  1 \leq \frac {\langle u^{-1}\rangle_I \langle uw^{2t}\rangle_I}{\langle w^{t}\rangle_I^{2}}\leq \frac {\langle u^{-1}\rangle_I \langle uw^{2t}\rangle_I}{\langle w\rangle_I^{2t}}.\]
This implies that 
\begin{equation}\label{eqn:lambdaI} 
\lambda_I\leq \mu_I^{1/2}\rho_I^{1/2} \frac{1}{\langle u^{-1}\rangle_I \langle uw^{2t}\rangle_I}.
\end{equation}

Second, by duality, showing the boundedness of $P^t_{w,\lambda}$ on $L^2(u)$  is equivalent to proving that
\[ | \langle P^t_{w,\lambda} (fu^{-1}), gu\rangle | \lesssim \|f\|_{L^2(u^{-1})} \|g\|_{L^2(u)}.\]
We will show this inequality using first \eqref{eqn:lambdaI}, then the Cauchy-Schwarz inequality, and  then the Weighted Carleson's Lemma  (Lemma~\ref{WCL}) applied to the given $u^{-1}$ and $uw^{2t}$-Carleson sequences in conditions (ii) and (iii). We have that,

\begin{eqnarray*} 
 | \langle P^t_{w,\lambda} (fu^{-1}), gu\rangle | & \leq & \sum_{I\in\mathcal{D}}   \lambda_I |\langle fu^{-1}\rangle_I| | \langle w^t gu\rangle_I |\\
 & \leq & \sum_{I\in\mathcal{D}}    \mu_I^{1/2} \frac{\langle |f| u^{-1} \rangle_I}{\langle u^{-1}\rangle_I}  \rho_I^{1/2} \frac{\langle uw^{t} |g|\rangle_I}{\langle uw^{2t}\rangle_I}\\
 & \leq &  \sum_{I\in\mathcal{D}}    \mu_I^{1/2} \langle |f|\rangle_I^{u^{-1}}  \rho_I^{1/2} \langle w^{-t} |g|\rangle_I^{uw^{2t}} \\
 & \leq & \left ( \sum_{I\in\mathcal{D}}    \mu_I  |\langle |f|\rangle_I^{u^{-1}}|^2 \right )^{1/2}  \left ( \sum_{I\in\mathcal{D}}   \rho_I |\langle w^{-t} |g|\rangle_I^{uw^{2t}} |^2 \right )^{1/2}\\
  & \leq & \sqrt{C_2C_3}\, \|M_{u^{-1}}f\|_{L^2(u^{-1})} \| M_{uw^{2t}} (w^{-t}g)\|_{L^2(uw^{2t})} \\
  & \leq & 4\sqrt{C_2C_3}\, \| f\|_{L^2(u^{-1})} \|w^{-t}g\|_{L^2(uw^{2t})}\\
 & = & 4\sqrt{C_2C_3}\,  \| f\|_{L^2(u^{-1})} \|g\|_{L^2(u)}.
\end{eqnarray*}
\end{proof}

There is the case  $0<t<1$  left to analyze. Given what we know when $u=v=1$ it is expected that the arguments for $1/2<t<1$ and for  $0<t\leq 1/2$ may be different. 
We would hope Theorem~\ref{thm:1weight-t-HaarMultiplier} still holds for $1/2<t<1$. Note that Theorem~\ref{thm:two-weight} does hold for all $t\in \R$, in particular for $0<t<1$.
\begin{question} Does Theorem~\ref{thm:1weight-t-HaarMultiplier} hold for some $t\in (0,1)$?
\end{question}

\section{Necessary conditions for two-weight boundedness of $T^t_{w,\sigma}$}\label{sec:necessary}

The conditions known to be necessary for boundedness from $L^2(u)$ into $L^2(v)$ of $T^t_{w,\sigma}$ are testing conditions. The simplest necessary conditions are obtained by testing on Haar functions. First observe that  $T^t_{w,\sigma}(h_I)= \sigma_I \langle w\rangle_I^{-t}w^th_I$.
Therefore the boundedness of $T_{w,\sigma}^t$ from $L^2(u)$ into $L^2(v)$ implies that  for all $I\in\mathcal{D}$
\[ \| \sigma_I \langle w\rangle_I^{-t}w^th_I\|_{L^2(v)}\leq C \|h_I\|_{L^2(u)},\]
where $C=\|T^t_{w,\sigma}\|_{L^2(u)\to L^2(v)}$.
Computing both norms we get
$ \langle w\rangle_I^{-t} \langle w^{2t}v\rangle_I^{1/2}\leq C \langle u\rangle_I^{1/2},$
which after squaring and multiplying appropriately becomes
\begin{eqnarray}\label{eqn:testing}
 \frac{\langle u\rangle_I^{-1} \langle w^{2t}v\rangle_I}{\langle w\rangle_I^{2t}} \leq C^2 .
 \end{eqnarray}
This is close but not exactly  condition (i) in Theorem~\ref{thm:2weight-t-HaarMultiplier}.
But since $1\leq \langle u\rangle_I \langle u^{-1} \rangle_I$, condition (i)  implies the testing condition \eqref{eqn:testing}. But not the other way around unless $u\in A_2^d$. 

We will show in this section that conditions (i)-(iv) are necessary for the uniform (with respect to $\sigma$) boundedness of $T^t_{w,\sigma}$.

When $w\equiv 1$ or $t=0$, the corresponding $t$-Haar multiplier  is the martingale transform $T_{\sigma}$ and  conditions (i)-(iv) are exactly the necessary and sufficient conditions of Nazarov, Treil and Volberg for the uniform boundedness of the martingale transform \cite{NTV1}. This is an indication that the conditions are necessary if they are to hold for all values of $t\in\R$ and for all weights $w$. If we fix the value of $t\in\R$ and a weight $w$, are the conditions still necessary? The answer is yes.

\subsection{The necessity of conditions (i), (ii) and (iii)}\label{sec:necessity(i)-(iii)}

In this section we show that conditions (i), (ii) and (iii) in Theorem~\ref{thm:2weight-t-HaarMultiplier} are necessary for the uniform (on choice of signs $\sigma$) boundedness of $T^t_{w,\sigma}$ from $L^2(u)$ into $L^2(v)$ and for each fixed $t\in\R$ and weight $w$.

\begin{theorem}[Two-weight theorem: necessary conditions (i)-(iii)]\label{thm:necessity(i)-(iii)}
Let $t\in\R$ and $w$ a weight be fixed. If $T^t_{w,\sigma}$ are uniformly (with respect to $\sigma$) bounded from $L^2(u)$ into $L^2(v)$ then conditions {\rm (i)}, {\rm (ii)} and {\rm (iii)} in Theorem~\ref{thm:2weight-t-HaarMultiplier} hold. Moreover
$C_i\lesssim \|T^t_{w,\sigma}\|^2_{L^2(u)\to L^2(v)}$ for $i=1,2,3$.
\end{theorem}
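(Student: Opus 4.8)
The plan is to convert the operator bound into two clean ``quadratic testing'' inequalities by averaging over the signs $\sigma$, and then to read off (i), (ii), (iii) by substituting three explicit test functions. First I would record that, writing $C:=\|T^t_{w,\sigma}\|_{L^2(u)\to L^2(v)}$ (finite and independent of $\sigma$ by hypothesis), boundedness means $\|T^t_{w,\sigma}F\|_{L^2(v)}\le C\|F\|_{L^2(u)}$ for every $F$ and every $\sigma$. Since $T^t_{w,\sigma}F = w^t\sum_{J}\sigma_J\langle w\rangle_J^{-t}\langle F,h_J\rangle h_J$, I would take the signs to be Rademacher and average: the $L^2$ orthogonality $\mathbb{E}\,\sigma_J\sigma_{J'}=\delta_{JJ'}$ kills the off-diagonal terms, and because the bound holds for \emph{every} $\sigma$ it survives the averaging. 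Using $\int w^{2t}v\,h_J^2 = \langle vw^{2t}\rangle_J$ this yields the diagonal inequality
\[ \sum_{J\in\D}\frac{\langle vw^{2t}\rangle_J}{\langle w\rangle_J^{2t}}\,|\langle F,h_J\rangle|^2\ \le\ C^2\,\|F\|_{L^2(u)}^2\qquad (\star) \]
valid for all $F$, working first with finite truncations of $\mathcal D$ and letting them exhaust $\mathcal D$, exactly as in the proof of Theorem~\ref{thm:2weight-t-HaarMultiplier}.

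Running the same averaging on the adjoint produces the dual inequality. A short computation gives $(T^t_{w,\sigma})^*G = u^{-1}\sum_J\sigma_J\langle w\rangle_J^{-t}\langle Gvw^t,h_J\rangle h_J$, and since $\|(T^t_{w,\sigma})^*\|_{L^2(v)\to L^2(u)}=\|T^t_{w,\sigma}\|_{L^2(u)\to L^2(v)}\le C$ uniformly in $\sigma$, averaging together with $\int u^{-1}h_J^2=\langle u^{-1}\rangle_J$ gives
\[ \sum_{J\in\D}\frac{\langle u^{-1}\rangle_J}{\langle w\rangle_J^{2t}}\,|\langle Gvw^t,h_J\rangle|^2\ \le\ C^2\,\|G\|_{L^2(v)}^2\qquad (\star\star) \]
for all $G$. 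The key feature of both $(\star)$ and $(\star\star)$ is that they are sums of non-negative terms, so discarding terms only weakens them and no control of cross terms is ever required.

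The three conditions then fall out by choosing test functions and keeping only the terms we need. For (i), apply $(\star)$ to $F=h_I u^{-1}$ and retain only the summand $J=I$: since $\langle h_Iu^{-1},h_I\rangle=\langle u^{-1}\rangle_I$ and $\|h_Iu^{-1}\|_{L^2(u)}^2=\langle u^{-1}\rangle_I$, this reads $\frac{\langle vw^{2t}\rangle_I}{\langle w\rangle_I^{2t}}\langle u^{-1}\rangle_I^2\le C^2\langle u^{-1}\rangle_I$, i.e.\ $C_1\le C^2$; note this is precisely the choice that upgrades the crude Haar test \eqref{eqn:testing} from $\langle u\rangle_I$ to $\langle u^{-1}\rangle_I$. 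For (ii), apply $(\star)$ to $F=\mathbbm{1}_I u^{-1}$, so $\langle F,h_J\rangle=\langle u^{-1},h_J\rangle$ for every $J\subset I$ and $\|F\|_{L^2(u)}^2=|I|\langle u^{-1}\rangle_I$; retaining only $J\subset I$ and using $|\langle u^{-1},h_J\rangle|^2=\tfrac{|J|}{4}|\Delta_Ju^{-1}|^2$ gives condition (ii) with $C_2\lesssim C^2$. Symmetrically, for (iii) apply $(\star\star)$ to $G=\mathbbm{1}_I w^t$, so $\langle Gvw^t,h_J\rangle=\langle vw^{2t},h_J\rangle$ for $J\subset I$ and $\|G\|_{L^2(v)}^2=|I|\langle vw^{2t}\rangle_I$, which yields condition (iii) with $C_3\lesssim C^2$. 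In every case $C_i\lesssim\|T^t_{w,\sigma}\|^2_{L^2(u)\to L^2(v)}$, as claimed.

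I expect the only substantive step to be the first one: the sign-averaging is exactly where the uniform-in-$\sigma$ hypothesis enters, and it is what linearizes the problem into the non-negative quadratic estimates $(\star)$ and $(\star\star)$. The remaining care is routine: justifying the averaging through a truncation argument for infinite $\mathcal D$, and the bookkeeping computation of the adjoint $(T^t_{w,\sigma})^*$. Once $(\star)$ and $(\star\star)$ are available, extracting (i)--(iii) is immediate and, crucially, avoids any estimation of off-diagonal interactions.
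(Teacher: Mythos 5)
Your proposal is correct and follows essentially the same route as the paper: averaging over the signs (the paper invokes Khintchine's inequality, which in $L^2$ is exactly your Rademacher orthogonality computation) to obtain the diagonal testing inequalities, then testing with $u^{-1}\mathbbm{1}_I$ for (ii) and, via the adjoint, with $vw^{2t}\mathbbm{1}_I$ for (iii). Your only departures are cosmetic: for (i) you test on $h_I u^{-1}$ and keep the diagonal term, where the paper tests on $u^{-1}\mathbbm{1}_{J_\pm}$ and sums over the two children, and you compute the adjoint with respect to the weighted pairings rather than the plain $L^2$ pairing used in the paper.
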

\begin{proof}
The uniform (with respect to $\sigma$) boundedness of the operator 
$$T^t_{w,\sigma}f=\sum_{I\in\mathcal{D}}\sigma_I\left(\frac{w(x)}{\langle w\rangle_I}\right)^t\langle f,h_I\rangle h_I$$
from $L^2(u)$ into $L^2(v)$ is equivalent, by duality, to the existence of a constant $C>0$ such that for all choices of signs $\sigma$, and for all $f,g\in L^2(\R)$ the following estimate holds 
\begin{equation}\label{duality}
|\langle v^{1/2}T_{w,\sigma}^t(u^{-1/2}f),g\rangle | \leq C\|f\|_{L^2(\R)}\|g\|_{L^2(\R)}\, ,
\end{equation}
where $C=\|T^t_{w,\sigma}\|_{L^2(u)\to L^2(v)}$.
 Moreover $\|f\|_{L^2(\R)}=\|u^{-1/2}f\|_{L^2(u)}$ and $\|g\|_{L^2(\R)}=\|v^{1/2}g\|_{L^2(v^{-1})}$. 

The left-hand-side of  inequality~\eqref{duality} can be written as
\begin{align*}
  \langle v^{1/2}T_{w,\sigma}^t(u^{-1/2}f),g\rangle &=\left\langle v^{1/2}\sum_{I\in\mathcal{D}}\sigma_I\left(\frac{w(x)}{\langle w\rangle_I}\right)^t\langle u^{-1/2}f,h_I\rangle h_I,g\right\rangle\\
  &=\left\langle (vw^{2t})^{1/2}\sum_{I\in\mathcal{D}}\frac{\sigma_I}{\langle w\rangle_I^t}\langle u^{-1/2}f, h_I\rangle h_I, g\right\rangle\,.
\end{align*}
Therefore, we can see that the uniform boundedness  from $L^2(u)$ into $L^2(v)$  of the $t$-Haar multipliers $T^t_{w,\sigma}$  is equivalent to the uniform boundedness from $L^2(u)$ to $L^2(vw^{2t})$ of the operators $T_{\sigma}T_{w,t}$ defined by
\begin{equation}\label{TsigmaTwt}
(T_{\sigma}T_{w,t})f:=\sum_{I\in\mathcal{D}}\frac{\sigma_I}{\langle w\rangle_I^t}\langle f, h_I\rangle h_I\,.
\end{equation}
Formally the constant Haar multiplier $T_{\sigma}T_{w,t}$ is the composition of the martingale transform $T_{\sigma}$ and the operator
$T_{w,t}f:= \sum_{I\in\mathcal{D}}{\langle w\rangle_I^{-t}}\langle f, h_I\rangle h_I\,$. That explains the notation we are using in \eqref{TsigmaTwt}. We will treat $T_{\sigma}T_{w,t}$ as a unit, we will not decouple the actions of $T_{\sigma}$ and $T_{w,t}$.

Moreover for each choice of signs $\sigma$, we have 
$$\|T_{w,\sigma}^t\|_{L^2(u)\rightarrow L^2(v)}=\|T_{\sigma} T_{w,t}\|_{L^2(u)\rightarrow L^2(vw^{2t})}\,.$$


Since $\Delta_I f=\langle f\rangle_{I_+}-\langle f\rangle_{I_-}
=\frac{2}{\sqrt{|I|}}\langle f,h_I\rangle$
for a locally integrable function $f$, one can rewrite 
$$T_{\sigma} T_{w,t}f=\frac{1}{2}\sum_{I\in\mathcal{D}}\frac{\sigma_I}{\langle w\rangle_I^t}\sqrt{|I|}\Delta_I f \,h_I\,.$$
The uniform (with respect to $\sigma$) boundedness of  $T_{\sigma}T_{w,t}$ from $L^2(u)$ to $L^2(vw^{2t})$ means there is $C>0$ such that for all $\sigma$ and for all $f\in L^2(u)$
\[\int_{\R}\left| (T_{\sigma} T_{w,t})f(x)\right|^2 v(x)w^{2t}(x)\,dx
 \leq C\int_{\R} \left|f(x)\right|^2 u(x)\,dx\,, \label{testingcondition}\]
 where by previous commentary $C=\|T^t_{w,\sigma}\|^2_{L^2(u)\to L^2(v)}$.
%
%
%
We can compute the expectation $\mathbb{E}_{\sigma}$ with respect to choices of signs $\sigma$ and keep the inequality
\[\mathbb{E}_{\sigma}\left (\| (T_{\sigma}T_{w,t})f\|_{L^2(vw^{2t})}^2 \right )\\
\leq C \int_{\R} \left|f(x)\right|^2 u(x)\,dx,\]
precisely because we have uniform (with respect to the choice of signs  $\sigma$) boundedness.
Khintchine's inequality ensures that
\[
\mathbb{E}_{\sigma}\left (\| (T_{\sigma}T_{w,t})f\|_{L^2(vw^{2t})}^2 \right ) \approx 
 \sum_{I\in\mathcal{D}}\frac{1}{\langle w\rangle_I^{2t}}|I||\Delta_I f|^2 \langle vw^{2t}\rangle_I \,. \]
Thus we have obtained
\begin{equation}
 \sum_{I\in\mathcal{D}}\frac{1}{\langle w\rangle_I^{2t}}|I||\Delta_I f|^2 \langle vw^{2t}\rangle_I  \lesssim C \int_{\R} \left|f(x)\right|^2 u(x)\,dx\,. \label{testingcondition}
\end{equation}

For fixed $J\in\mathcal{D}$, choose $f=u^{-1}\mathbbm{1}_J$ to get
$$ \frac{1}{|J|}\sum_{I\in\mathcal{D}(J)}\frac{1}{\langle w\rangle_I^{2t}}|I||\Delta_I (u^{-1}\mathbbm{1}_J)|^2 \, \langle vw^{2t}\rangle_I
\lesssim C\langle u^{-1}\rangle_J\,.$$ 
Noting that $\Delta_I(u^{-1}\mathbbm{1}_J)=\Delta_I u^{-1}$ for all $I\in\mathcal{D}(J)$, we get  the three weight Carleson condition (ii) in Theorem~\ref{thm:2weight-t-HaarMultiplier}, with  $C\lesssim \|T^t_{w,\sigma}\|^2_{L^2(u)\to L^2(v)}$.

Applying  condition (\ref{testingcondition}), with  $u^{-1}\mathbbm{1}_{J_{\pm}}$ instead of $u^{-1}\mathbbm{1}_J$, we get
\begin{align*}
C\langle u^{-1}\rangle_{J_{\pm}}&\geq   \frac{1}{|J|}\sum_{I\in\mathcal{D}(J)}\frac{1}{\langle w\rangle_I^{2t}}|I||\Delta_I (u^{-1}\mathbbm{1}_{J_{\pm}})|^2 \,\langle vw^{2t}\rangle_I\\
&\geq \frac{1}{|J|}\frac{1}{\langle w\rangle_J^{2t}}|J||\Delta_J(u^{-1}\mathbbm{1}_{J_{\pm}})|^2\, \langle vw^{2t}\rangle_J\\
&=\frac{1}{\langle w\rangle_J^{2t}}\langle u^{-1}\rangle_{J_{\pm}}^2\,\langle vw^{2t}\rangle_J\,,
\end{align*}
where the last equality  is due to $|\Delta_J(u^{-1}\mathbbm{1}_{J_{\pm}})|=\langle u^{-1}\rangle_{J_{\pm}}$. Then

$$\langle u^{-1}\rangle_{J_{\pm}}\, \langle vw^{2t} \rangle_J  \leq C \langle w\rangle_J^{2t}$$
Adding the above inequality for $J_+$ and $J_-$, we get  condition (i) in Theorem~\ref{thm:2weight-t-HaarMultiplier},
$$\frac{\langle u^{-1}\rangle_J\, \langle vw^{2t}\rangle_J}{\langle w\rangle_J^{2t}}\leq C\,,$$
where $C\lesssim\|T^t_{w,\sigma}\|^2_{L^2(u)\to L^2(v)}$.

To get  condition (iii) in Theorem~\ref{thm:2weight-t-HaarMultiplier} we proceed similarly looking at the adjoint operator $(T^t_{w,\sigma})^*: L^2(v^{-1})\to L^2(u^{-1})$ given by
\[ (T^t_{w,\sigma})^*g= \sum_{I\in\mathcal{D}}\sigma_I\left(\frac{\langle w^tf,h_I\rangle }  {\langle w\rangle_I}\right)^th_I.\]
One can verify that 
$$\|(T_{w,\sigma}^t)^*\|_{L^2(v^{-1})\rightarrow L^2(u^{-1})}=\|T_{\sigma} T_{w,t}\|_{L^2(v^{-1}w^{-2t})\rightarrow L^2(u^{-1})}\,.$$
Exactly the same calculation we did to obtain \eqref{testingcondition}, simply interchanging the role of $u$ with $v^{-1}w^{-2t}$ and the role $vw^{2t}$ with $u^{-1}$, will give
\begin{equation}
 \sum_{I\in\mathcal{D}}\frac{1}{\langle w\rangle_I^{2t}}|I||\Delta_I g|^2 \langle u^{-1}\rangle_I  \leq C \int_{\R} \left|g(x)\right|^2 v^{-1}(x)w^{-2t}(x)\,dx\,. \label{dualtestingcondition}
\end{equation}
For a given $J\in\mathcal{D}$ and replacing $g=vw^{2t}\mathbbm{1}_J$ in \eqref{dualtestingcondition} will give condition (iii), where $C\lesssim\|T^t_{w,\sigma}\|^2_{L^2(u)\to L^2(v)}$. The details are left to the reader.
\end{proof}

As a corollary of Theorem~\ref{thm:necessity(i)-(iii)} and Theorem~\ref{thm:1weight-t-HaarMultiplier}, we conclude that in the case $u=v$ and $t\leq 0$ or $t\geq 1$ conditions (i)-(iii) are necessary and sufficient for the uniform (with respect to the choice of signs $\sigma$) boundedness of $T^t_{w,\sigma}$ on $L^2(u)$, which is precisely Theorem~\ref{thm:one-weight} (one-weight theorem) stated in the introduction.

\subsection{The necessity of the condition (iv)}\label{sec:necessity(iv)}

In this section we show that the uniform boundedness (on the choice of $\sigma$) of the $t$-Haar multipliers from $L^2(u)$ into $L^2(v)$ implies condition (iv)  in Theorem~\ref{thm:2weight-t-HaarMultiplier} , namely the boundedness of the positive operator $P^t_{w,\lambda}$ from $L^2(u)$ into $L^2(v)$, where
$$P^t_{w,\lambda}f(x) := \sum_{I\in\mathcal{D}} \frac{w^t(x)}{|I|}\lambda_I \langle f\rangle_I \mathbbm{1}_I(x), \quad  \mbox{and}  \quad \lambda_I:= 
\frac{|\Delta_I(vw^{2t})|}{\langle vw^{2t}\rangle_I}\frac{|\Delta_Iu^{-1}|}{\langle u^{-1}\rangle_I} \frac{|I|}{\langle w\rangle_I^t}. $$
%

First  we recall the Bilinear Embedding Theorem of Nazarov, Treil and Volberg \cite{NTV1}. This theorem says that under some Sawyer type testing conditions a bilinear embedding theorem will hold. We will then show that the uniform (on the choice of signs $\sigma$) boundedness of $T^t_{w,\sigma}$ implies those conditions and hence the boundedness of $P^t_{w,\lambda}$.  The proof of this bilinear embedding theorem uses an intricate Bellman function argument. 

We are stating  the bilinear embedding theorem for the pair of weight of interest to us: $u^{-1}$ and $vw^{2t}$.

\begin{theorem}[Bilinear Embedding Theorem \cite{NTV1} Section 2] \label{thm:bilinear-embedding}
Let $\{\alpha_I\}_{I\in\mathcal{D}}$ be a sequence of non-negative numbers, and let 
$$T_0f:=\sum_{I\in\mathcal{D}} \frac{\alpha_I}{|I|}\langle f\rangle_I \mathbbm{1}_I(x), $$ 
Let $u^{-1}$ and $vw^{2t}$ be two weights. Then  the following are equivalent:
\begin{itemize}
 \item[{\rm (a)}] The operator $v^{1/2}w^t \, T_0\,u^{-1/2}$ is bounded on $L^2(\R)$.
\item[{\rm (b)}] There is $C>0$ such that for all non negative $f,g\in L^2(\R )$ the following bilinear inequality holds 
\[ \sum_{I\in\mathcal{D}} \frac{\alpha_I}{|I|^2}\langle fu^{-1/2}\rangle_I \langle gv^{1/2}w^t \rangle_I \leq C \|f\|_{L^2(\R)}\|g\|_{L^2(\R)}\].
 \item[{\rm (c)}]There is a constant $C'>0$ such that for all $I\in \mathcal{D}$
 \begin{itemize}
 \item[{\rm 1.}]  $\displaystyle{ \frac{1}{|I|}\int_I |T_0(u^{-1}\mathbbm{1}_I)(x)|^2v(x)w^{2t}(x) \,dx \leq C' \langle u^{-1}\rangle_I}$, and
 \item[{\rm 2.}] $\displaystyle{ \frac{1}{|I|}\int_I |T_0^*(vw^{2t}\mathbbm{1}_I)(x)|^2 u^{-1}(x) \,dx \leq C' \langle vw^{2t}\rangle_I}$.
  \end{itemize}
 \end{itemize}

\end{theorem}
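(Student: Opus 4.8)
The plan is to prove the three statements equivalent by treating $(a)\Leftrightarrow(b)$ as a routine duality step, extracting the testing conditions $(c)$ from operator boundedness by plugging in characteristic functions, and reserving the real work for the converse $(c)\Rightarrow(b)$, which is the genuine content and rests on the Bellman function method of Nazarov, Treil and Volberg.

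First I would establish $(a)\Leftrightarrow(b)$ by expanding the relevant inner product. Writing $S:=v^{1/2}w^tT_0u^{-1/2}$ and using that $v^{1/2}w^t$ and $u^{-1/2}$ act as multiplication operators, for non-negative $f,g\in L^2(\R)$ one computes
\begin{align*}
\langle Sf,g\rangle &= \langle T_0(u^{-1/2}f),\,v^{1/2}w^tg\rangle\\
&= \sum_{I\in\mathcal{D}}\frac{\alpha_I}{|I|}\langle u^{-1/2}f\rangle_I\int_I v^{1/2}w^tg\\
&= \sum_{I\in\mathcal{D}}\alpha_I\,\langle fu^{-1/2}\rangle_I\,\langle gv^{1/2}w^t\rangle_I,
\end{align*}
which is precisely the bilinear form of $(b)$ once the coefficients are matched via $\alpha_I=\lambda_I/|I|^2$, as dictated by the definition of $T_0$. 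Hence $\|S\|_{L^2\to L^2}$ equals the best constant $C$ in $(b)$, and $(a)\Leftrightarrow(b)$ follows by duality. In particular $(a)$ is equivalent to the boundedness $T_0\colon L^2(u)\to L^2(vw^{2t})$.

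Next, $(a)\Rightarrow(c)$ is obtained by testing on characteristic functions. For condition 1, apply the boundedness of $S$ to $f=u^{-1/2}\mathbbm{1}_I$, so that $u^{-1/2}f=u^{-1}\mathbbm{1}_I$ and $\|f\|_{L^2}^2=|I|\langle u^{-1}\rangle_I$; then
\[\frac{1}{|I|}\int_I |T_0(u^{-1}\mathbbm{1}_I)|^2 vw^{2t}\,dx \le \frac{1}{|I|}\|Sf\|_{L^2}^2 \le \frac{C^2}{|I|}\|f\|_{L^2}^2 = C^2\langle u^{-1}\rangle_I,\]
where restricting the integral to $I$ only decreases the left-hand side. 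Condition 2 follows identically upon replacing $S$ by its adjoint $S^*=u^{-1/2}T_0^*\,v^{1/2}w^t$ and testing on $g=v^{1/2}w^t\mathbbm{1}_I$. Thus the testing constant $C'$ in $(c)$ is controlled by $C^2$, giving the easy necessity direction.

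The substance of the theorem is the converse $(c)\Rightarrow(b)$, and this is where I expect the main obstacle. The strategy I would follow is that of Nazarov, Treil and Volberg: organize the dyadic sum in $(b)$ along the tree and run a stopping-time (corona) decomposition, declaring stopping intervals where the weighted averages $\langle f\rangle^{u^{-1}}_I$ or $\langle g\rangle^{vw^{2t}}_I$ first double relative to the top of the current corona. On each corona these averages are essentially frozen, so the inner sum collapses to a local expression of exactly the shape appearing in the testing conditions 1 and 2, which then bound the corona contribution by $C'$ times a single average over the stopping interval; the outer sum over stopping intervals is then controlled by their Carleson/packing property together with a Cauchy--Schwarz splitting into the $f$-corona and $g$-corona pieces. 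The delicate point, and the reason a Bellman function is used rather than a bare stopping-time count, is that the two testing conditions control different ``halves'' of each local term and must be recombined with the correct convexity: a Bellman function of the running averages of $u^{-1}$, $vw^{2t}$, $f$, $g$ together with the two energy quantities encodes exactly this bookkeeping, its size and concavity being governed by $C'$. I would invoke the explicit Bellman function constructed in \cite[Section 2]{NTV1} to close the argument, and I expect the verification of its concavity (the ``main inequality'') to be the most technical step.
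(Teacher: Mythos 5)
There is an important mismatch of expectations here: the paper does \emph{not} prove this statement. It is recalled verbatim from \cite{NTV1} (Section 2), with only the remark that its proof ``uses an intricate Bellman function argument,'' so there is no internal proof to compare against. Measured against that, your proposal is fine and even does a bit more than the paper. The two directions you actually prove are correct: the identity $\langle v^{1/2}w^t T_0 u^{-1/2}f,g\rangle=\sum_{I\in\mathcal{D}}\alpha_I\langle fu^{-1/2}\rangle_I\langle gv^{1/2}w^t\rangle_I$, combined with the positivity of the kernel of $T_0$ (so that boundedness may be tested on non-negative $f,g$), gives (a)$\Leftrightarrow$(b); and testing $S=v^{1/2}w^tT_0u^{-1/2}$ and $S^*$ on $u^{-1/2}\mathbbm{1}_I$ and $v^{1/2}w^t\mathbbm{1}_I$ gives (a)$\Rightarrow$(c) exactly as you computed. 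For the substantive implication (c)$\Rightarrow$(a), you—like the paper—delegate to the Bellman function of \cite{NTV1}; the only quibble is that your narrative conflates a corona/stopping-time decomposition with the Bellman method, whereas the NTV argument is a direct Bellman-function induction over the dyadic tree with no stopping intervals, but since you close by invoking their explicit Bellman function this leaves no gap beyond what the paper itself leaves to the reference. One bookkeeping caution: the statement as printed is internally inconsistent ($T_0$ is built from $\alpha_I$ while (b) is phrased with $\lambda_I$, and the brackets denote averages). Your resolution $\alpha_I=\lambda_I/|I|^2$ makes the equivalence true as literally written; the paper instead applies the theorem with $\alpha_I=\lambda_I$, which is consistent with (b) only if the brackets there are read as integrals $\int_I$ rather than averages. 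Either reading yields a correct theorem, but the discrepancy matters when the result is invoked to prove the necessity of condition (iv), where the bilinear form obtained there is $\frac14\sum_I\lambda_I\langle u^{-1/2}f\rangle_I\langle gv^{1/2}w^t\rangle_I$ with averages, i.e.\ the paper's intended normalization, not yours.
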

We will use this theorem for $\displaystyle{\alpha_I=\lambda_I= \frac{|\Delta_I(u^{-1})|}{\langle u^{-1}\rangle_I}\frac{|\Delta_I (vw^{2t})|}{\langle vw^{2t}\rangle_I} \frac{|I|}{\langle w\rangle_I^t}}$.  \\


We are now ready to show  the necessity of condition (iv) in Theorem~\ref{thm:2weight-t-HaarMultiplier} given the uniform two-weight boundedness of the $t$-Haar multipliers. With this, Theorem~\ref{thm:two-weight} advertised in the Introduction (Section~\ref{sec:introduction}) will be proved.

\begin{theorem}[Two-weight theorem: necessary condition (iv)]\label{thm;necessity(iv)}
Given  $t\in\R$ and a   weight $w$.  If  the signed $t$-Haar multipliers $T^t_{w,\sigma}$ are uniformly (with respect to $\sigma$) bounded from $L^2(u)$ into $L^2(v)$ then condition {\rm (iv)} in Theorem~\ref{thm:2weight-t-HaarMultiplier} holds. Moreover $\|P^t_{w,\lambda}\|_{L^2(u)\to L^2(v)} \lesssim \|T^t_{w,\sigma}\|_{L^2(u)\to L^2(v)}$.
\end{theorem}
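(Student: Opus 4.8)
The plan is to derive the boundedness of $P^t_{w,\lambda}$ directly from the four-term splitting already used in the proof of Theorem~\ref{thm:2weight-t-HaarMultiplier}, by extracting the positive operator as the \emph{diagonal} term of the multiplier's bilinear form for a cleverly chosen sign pattern, rather than verifying the local testing conditions of the Bilinear Embedding Theorem (Theorem~\ref{thm:bilinear-embedding}). By duality, and since $P^t_{w,\lambda}$ has a nonnegative kernel, it is enough to show for nonnegative $f,g$ that
\[ \langle P^t_{w,\lambda}(fu^{-1}), gv\rangle = \sum_{I\in\mathcal{D}} \lambda_I\,\langle fu^{-1}\rangle_I\,\langle gvw^t\rangle_I \lesssim \|T^t_{w,\sigma}\|_{L^2(u)\to L^2(v)}\,\|f\|_{L^2(u^{-1})}\,\|g\|_{L^2(v)}. \]

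The crux is a sign-alignment. Writing the splitting $\langle T^t_{w,\sigma}(fu^{-1}), gv\rangle = \Sigma_1+\Sigma_2+\Sigma_3+\Sigma_4$ and inserting the exact values $\beta_I^{u^{-1}}=\Delta_I u^{-1}/(2\langle u^{-1}\rangle_I)$, $\beta_I^{vw^{2t}}=\Delta_I(vw^{2t})/(2\langle vw^{2t}\rangle_I)$ from \eqref{eqn:alpha-beta}, the diagonal term is
\[ \Sigma_4 = \frac14\sum_{I\in\mathcal{D}} \sigma_I\,\frac{\Delta_I u^{-1}\,\Delta_I(vw^{2t})}{\langle u^{-1}\rangle_I\,\langle vw^{2t}\rangle_I}\,\frac{|I|}{\langle w\rangle_I^t}\,\langle fu^{-1}\rangle_I\,\langle gvw^t\rangle_I. \]
Choosing $\sigma_I:=\sgn\!\big(\Delta_I u^{-1}\,\Delta_I(vw^{2t})\big)$ replaces the signed product by $|\Delta_I u^{-1}|\,|\Delta_I(vw^{2t})|$, giving the exact identity $\Sigma_4=\tfrac14\langle P^t_{w,\lambda}(fu^{-1}), gv\rangle$. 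This particular $\sigma$ is admissible precisely because the hypothesis is \emph{uniform} boundedness over all sign choices.

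I then solve $\langle P^t_{w,\lambda}(fu^{-1}), gv\rangle = 4\langle T^t_{w,\sigma}(fu^{-1}), gv\rangle-4\Sigma_1-4\Sigma_2-4\Sigma_3$ and bound each piece. The multiplier term is controlled by $\|T^t_{w,\sigma}\|_{L^2(u)\to L^2(v)}\|f\|_{L^2(u^{-1})}\|g\|_{L^2(v)}$ from the operator norm. The off-diagonal terms $\Sigma_1,\Sigma_2,\Sigma_3$ are bounded by $\sqrt{C_1},\sqrt{C_2},\sqrt{C_3}$ times $\|f\|_{L^2(u^{-1})}\|g\|_{L^2(v)}$ exactly as in Theorem~\ref{thm:2weight-t-HaarMultiplier}; these bounds hold for any signs since they are obtained after the triangle inequality, and since conditions (i)--(iii) are already known to be necessary with $C_1,C_2,C_3\lesssim\|T^t_{w,\sigma}\|^2_{L^2(u)\to L^2(v)}$ (Theorem~\ref{thm:necessity(i)-(iii)}), each is $\lesssim\|T^t_{w,\sigma}\|_{L^2(u)\to L^2(v)}\|f\|_{L^2(u^{-1})}\|g\|_{L^2(v)}$. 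Summing the four contributions gives the claimed estimate $\|P^t_{w,\lambda}\|_{L^2(u)\to L^2(v)}\lesssim\|T^t_{w,\sigma}\|_{L^2(u)\to L^2(v)}$.

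The delicate point is the limiting argument rather than any single estimate. The four-term identity is literal only for finite truncations, so I would run everything over a finite family $\mathcal{F}\subset\mathcal{D}$; the truncated multiplier $\sum_{I\in\mathcal{F}}\sigma_I(w/\langle w\rangle_I)^t\langle\cdot,h_I\rangle h_I$ equals $\tfrac12(T^t_{w,\sigma^{(1)}}+T^t_{w,\sigma^{(2)}})$ for two full sign patterns agreeing on $\mathcal{F}$, so \emph{uniform} boundedness again bounds its norm by $\|T^t_{w,\sigma}\|$, while for nonnegative $f,g$ the diagonal sums increase with $\mathcal{F}$ so monotone convergence upgrades the truncated bound to the full one. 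I expect this bookkeeping, together with the reliance on already having the necessity of (i)--(iii) in hand, to be the main hurdle. The alternative the section sets up---verifying part (c) of Theorem~\ref{thm:bilinear-embedding} for $T_0=\sum_I\tfrac{\lambda_I}{|I|}\langle\cdot\rangle_I\mathbbm{1}_I$ (so that $P^t_{w,\lambda}=w^tT_0$ makes its $L^2(u)\to L^2(v)$ boundedness exactly condition (a))---appears harder in one respect: the local testing integrals there involve the product $|\Delta_I u^{-1}|^2|\Delta_I(vw^{2t})|^2$, which is not controlled by (i)--(iii) alone and must itself be traced back to the boundedness of the multiplier.
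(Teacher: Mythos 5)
Your proposal is correct and follows essentially the same route as the paper: isolate the diagonal term $\widetilde{\Sigma}_4$ via the four-term decomposition, bound $\widetilde{\Sigma}_1,\widetilde{\Sigma}_2,\widetilde{\Sigma}_3$ using the already-established necessity of conditions (i)--(iii) with $C_i\lesssim\|T^t_{w,\sigma}\|^2_{L^2(u)\to L^2(v)}$, and choose $\sigma_I=\sgn\big(\Delta_I u^{-1}\,\Delta_I(vw^{2t})\big)$ together with the exact formulas \eqref{eqn:alpha-beta} so that the diagonal term becomes exactly $\tfrac14$ times the positive bilinear form of $P^t_{w,\lambda}$ on nonnegative functions. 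The only cosmetic difference is the last step: where the paper cites the implication (b)$\Rightarrow$(a) of the Nazarov--Treil--Volberg Bilinear Embedding Theorem~\ref{thm:bilinear-embedding}, you pass from the bilinear estimate on nonnegative $f,g$ to operator boundedness directly via the nonnegativity of the kernel of $P^t_{w,\lambda}$, which is precisely the content of that (easy) implication.
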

\begin{proof}
The work in the previous sections showed that the first three conditions in Theorem~\ref{thm:2weight-t-HaarMultiplier}  are necessary for the uniform boundedness of $T^t_{w,\sigma}$ (Theorem~\ref{thm:necessity(i)-(iii)}), and together with condition (iv), they are sufficient conditions (Theorem~\ref{thm:2weight-t-HaarMultiplier}). Moreover we showed in \eqref{t1} that for all choices of signs $\sigma$, 
\begin{align}
 |\langle T^t_{w,\sigma}(fu^{-1/2}), gv^{1/2}\rangle|&= \left | \sum_{I\in\D}\frac{\sigma_I}{\langle w\rangle_I^t} \langle u^{-1/2} f, h_I\rangle  \langle g v^{1/2}w^t, h_I\rangle  \right | \nonumber\\
&\leq \sum_{I\in\D}\frac{1}{\langle w\rangle_I^t}|\langle u^{-1/2} f, h_I\rangle | |\langle g v^{1/2}w^t, h_I\rangle| =: \Sigma (f,g). \label{eqn:sigmafg}
\end{align}
Here we are replacing $fu^{-1}\in L^2(u)$ and $gv\in L^2(v^{-1})$ in \eqref{t1} by $fu^{-1/2}\in L^2(u)$ and by $gv^{1/2}\in L^2(v^{-1})$ or equivalently $f,g\in L^2(\R)$.

In particular, given $f,g\in L^2(\R )$, we can select  each $\sigma_I$ so that the inequality \eqref{eqn:sigmafg} becomes an equality, in other words, for all $f, g\in L^2(\R)$
\[ \sup_{\sigma} |\langle T^t_{w,\sigma}(fu^{-1/2}), gv^{1/2}\rangle| = \Sigma(f,g).\]
Thus, the uniform boundedness of $T^t_{w,\sigma}$ from $L^2(u)$ into $L^2(v)$ is equivalent to the following bilinear estimate
\begin{equation}\label{bilinear-estimate}
\Sigma(f,g)=  \sum_{I\in\D}\frac{1}{\langle w\rangle_I^t}|\langle u^{-1/2} f, h_I\rangle | |\langle g v^{1/2}w^t, h_I\rangle| \leq C \|f\|_{L^2(\R )}\|g\|_{L^2(\R )}.
\end{equation}

In the proof of Theorem~\ref{thm:2weight-t-HaarMultiplier} we showed that 
\[ \langle T^t_{w,\sigma}(fu^{-1/2}), gv^{1/2}\rangle = \widetilde{\Sigma}_1(f,g)+\widetilde{\Sigma}_2(f,g)+\widetilde{\Sigma}_3(f,g)+\widetilde{\Sigma}_4(f,g).\]
where $\widetilde{\Sigma}_i(f,g)=\Sigma_i(fu^{1/2},gv^{-1/2})$ for $i=1,2,3,4$, and $\Sigma_i(f,g)$ where defined in~\eqref{eqn:decompositon4terma}. By the triangle inequality 
\[| \langle T^t_{w,\sigma}(fu^{-1/2}), gv^{1/2}\rangle -\widetilde{\Sigma}_4(f,g)  |\leq | \widetilde{\Sigma}_1(f,g)| + | \widetilde{\Sigma}_2(f,g)|+| \widetilde{\Sigma}_3(f,g)| .\] 
We also showed, in the proof of Theorem~\ref{thm:2weight-t-HaarMultiplier}, that if conditions (i), (ii) and (iii) hold then
\[  | \widetilde{\Sigma}_1(f,g)| + | \widetilde{\Sigma}_2(f,g)|+| \widetilde{\Sigma}_3(f,g)|\leq C \|f\|_{L^2(\R)}\|g\|_{L^2(\R)},\]
where $C\lesssim \|T^t_{w,\sigma}\|_{L^2(u)\to L^2(v)}$ and with the caveat that $f, g\in L^2(\R)$ instead of being in $L^2(u^{-1})$ and $L^2(v)$.
Notice that we just showed in Theorem~\ref{thm:necessity(i)-(iii)} that the uniform boundedness of $T^t_{w,\sigma}$ implies conditions (i), (ii), and (iii). Therefore the uniform boundedness of $T^t_{w,\sigma}$ from $L^2(u)$ into $L^2(v)$ is equivalent to 
the  estimate 
\[ |\widetilde{\Sigma}_4(f,g)|\leq C \|f\|_{L^2(\R)}\|g\|_{L^2(\R)},\]
where $C\lesssim \|T^t_{w,\sigma}\|_{L^2(u)\to L^2(v)}$.
Note that $\widetilde{\Sigma}_4(f,g)$ does depend on $\sigma$ whereas the estimate does not. We will select $\{\sigma_I: I\in\mathcal{D}\}$ so that all the terms in the sum are positive and to emphasize the dependence on this choice of $\sigma$  in what follows, we will write $\widetilde{\Sigma}_4^{\sigma}(f,g)$. For that particular choice of  signs $\sigma_I$,
\begin{eqnarray*}
|\widetilde{\Sigma}^{\sigma}_4(f,g)| &= & \left |\sum_{I\in\D}\frac{\sigma_I}{\langle w\rangle_I^t}\left\langle u^{-1/2}f, \beta_I^{u^{-1}}\frac{\mathbbm{1}_I}{\sqrt{|I|}}\right\rangle \left\langle \beta_I^{vw^{2t}}\frac{\mathbbm{1}_I}{\sqrt{|I|}}, gv^{1/2}w^t\right\rangle \right| \\
&=&\sum_{I\in\D}\frac{1}{\langle w\rangle_I^t}\left|\left\langle u^{-1/2}f, \beta_I^{u^{-1}}\frac{\mathbbm{1}_I}{\sqrt{|I|}}\right\rangle\right|\left|\left\langle \beta_I^{vw^{2t}}\frac{\mathbbm{1}_I}{\sqrt{|I|}}, gv^{1/2}w^t\right\rangle\right|.
\end{eqnarray*}
But now instead of the bounds on the $\beta$'s  used in the proof of Theorem~\ref{thm:2weight-t-HaarMultiplier}, we would like  to use the exact formulas \eqref{eqn:alpha-beta} to get the following equality
\begin{eqnarray}
 |\widetilde{\Sigma}^{\sigma}_4(f,g)| & = & \frac{1}{4}\sum_{I\in\D}\frac{|I|}{\langle w\rangle_I^t}\frac{|\Delta_I(u^{-1}|)}{\langle u^{-1}\rangle_I}\frac{|\Delta_I(vw^{2t})|}{\langle vw^{2t}\rangle_I} \left |\langle u^{-1/2}f\rangle_I\right |\left | \langle gv^{1/2}w^t\rangle_I\right | \nonumber\\
 & = & \frac{1}{4}\sum_{I\in\D} \lambda_I  \left |\langle u^{-1/2}f\rangle_I\right |\left | \langle gv^{1/2}w^t\rangle_I\right | \label{eqn:(IV)}\, .
\end{eqnarray}
If $f$  and $g$ are non-negative functions in $L^2(\R)$, then we can remove the absolute values in \eqref{eqn:(IV)} to  conclude that if $T^t_{w,\sigma}$ are uniformly bounded from $L^2(u)$ into $L^2(v)$ then for all positive $f,g\in L^2(\R)$ 
\[ \widetilde{\Sigma}(f,g):=\frac14\sum_{I\in\D} \lambda_I  \langle u^{-1/2}f\rangle_I \langle gv^{1/2}w^t\rangle_I \leq C\|f\|_{L^2(\R)}\|g\|_{L^2(\R )},\]
where $C\lesssim \|T^t_{w,\sigma}\|_{L^2(u)\to L^2(v)}$.
By the bilinear imbedding theorem of Nazarov, Treil and Volberg (Theorem~\ref{thm:bilinear-embedding}),  we conclude that  the positive operator $v^{1/2}w^t\,T_0\,u^{-1/2}$  is bounded on $L^2(\R)$, or equivalently the positive operator $v^{1/2}\,P^t_{w,\lambda}\, u^{-1/2}$ is bounded  on $L^2(\R)$, or equivalently the positive operator $P^t_{w,\lambda}$ is bounded from $L^2(u)$ into $L^2(v)$. We have shown that the uniform (on the signs $\sigma$) boundedness of the $t$-Haar multipliers $T^t_{w,\sigma}$ from $L^2(u)$ into $L^2(v)$ implies condition~(iv) in Theorem~\ref{thm:2weight-t-HaarMultiplier}, furthermore 
$\|P^t_{w,\lambda}\|_{L^2(u)\to L^2(v)} \lesssim \|T^t_{w,\sigma}\|_{L^2(u)\to L^2(v)} .$
\end{proof}

\section{Necessary and sufficient testing conditions for the boundedness of each individual $T^t_{w,\sigma}$}\label{individual}
First, we recall some definition and theorems from \cite{NTV2}. 
\begin{definition}
A band operator on $\mathbb{R}$ is a bounded operator on $L^2(\mathbb{R})$ whose matrix in the Haar basis has the band structure, meaning there exist $r\in\mathbb{Z}_+$ such that $\langle Th_I,h_J\rangle=0$ for all Haar function $h_I, h_J$ such that $d_{tree}(I,R)>r$.  
\end{definition}


Is the $t$-Haar multiplier $\displaystyle T_{w,\sigma}^tf=\sum_{I\in\mathcal{D}} \sigma_I\left(\frac{w(x)}{\langle w\rangle_I}\right)^t\langle f, h_I\rangle h_I$ a band operator? Since $\langle T^t_{w,\sigma}h_I,h_J\rangle=\sigma_I\langle w\rangle^{-t}_I\langle w^th_I,h_J\rangle\neq 0$, in general it is not a band operator on $\mathbb{R}$. However,  $\langle T_{\sigma}T_{w,t}h_I,h_J\rangle=\sigma_I \langle w\rangle_I^{-t}\langle h_I,h_J\rangle$ implies that the  operator $T_{\sigma}T_{w,t}$, a constant Haar multiplier defined in~\eqref{TsigmaTwt}, is a band operator with $r=0\,.$   

In this section we denote $\mathcal{M}_u$ the operator of multiplication by a function $u$, that is $\mathcal{M}_uf=uf$ and $\mathcal{M}^t_u:=M_{u^t}$. The boundedness of $T$ from $L^2(u)$ into $L^2(v)$ is equivalent to the boundedness of $\mathcal{M}_{v^{1/2}}T \mathcal{M}_{u^{-1/2}}=\mathcal{M}^{1/2}_vT\mathcal{M}^{1/2}_{u^{-1}}$ from $L^2(\R)$ into itself.
We will state the next theorems so that the domain of $T$ is $L^2(u)$, note that  in \cite{NTV2} the domain of $T$ is $L^2(u^{-1})$.

\begin{theorem}[\hskip -.005in\cite{NTV2}, Theorem 1.2]\label{thm:ntv2007-1}
Given weights $u,v$ so that $u^{-1}$ is also a weight, and $T$ a band operator. Then the operator $\mathcal{M}_v^{1/2}T\mathcal{M}_{u^{-1}}^{1/2}$ extends to a bounded operator in $L^2(\R )$ if and only if there is a constant $C>0$ such that for all dyadic intervals $I$ the following two inequalities hold:
\begin{align}
\int_{\mathbb{R}}|T(\mathbbm{1}_I u^{-1})(x)|^2 v(x)\,dx&\leq C\int_I u^{-1}(x)\, dx \label{ineq:NTV2_test_u} \\ 
\int_{\mathbb{R}}|T^{\ast}(\mathbbm{1}_I v)(x)|^2 u^{-1}(x)\,dx&\leq C\int_I v(x)\, dx \label{ineq:NTV2_test_v}
\end{align}
Moreover, the norm of the operator $\mathcal{M}_v^{1/2}T\mathcal{M}_{u^{-1}}^{1/2}$ can be estimated by a constant depending only on the number $r$ in the definition of the band operator, and the constant $C$ in \eqref{ineq:NTV2_test_u} and \eqref{ineq:NTV2_test_v}. 
\end{theorem}
The conditions of Theorem \ref{thm:ntv2007-1} can be relaxed a bit by integrating only over the intervals $I$.
\begin{theorem}[\hskip -.005in\cite{NTV2}, Theorem 1.5]\label{thm:ntv2007-2}
Given $r\geq 0$, weights $u,v$ so that $u^{-1}$ is also a weight, and $T$  a $r$-band operator. Then the operator $\mathcal{M}_v^{1/2}T\mathcal{M}_{u^{-1}}^{1/2}$ extends to a bounded operator in $L^2(\R )$ if and only if  the following two conditions hold:
\begin{itemize}
\item[{\rm (a)}] For all dyadic intervals $I,J$ satisfying $2^{-r}|I|\leq |J|\leq 2^r|I|$, we have that,
$$|\langle T\mathbbm{1}_I,\mathbbm{1}_J\rangle |\leq C \sqrt{u^{-1}(I)v(I)}.$$
\item[{\rm (b)}] For all dyadic intervals $I$, we have that,
$$\int_{I}|T(\mathbbm{1}_I u^{-1})(x)|^2 v(x)\, dx \leq Cu^{-1}(I),\quad \int_{I}|T^{\ast}(\mathbbm{1}_Iv)(x)|^2 u^{-1}(x) \, dx\leq Cv(I).$$
\end{itemize}
\end{theorem}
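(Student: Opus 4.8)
The plan is to prove the two implications separately: necessity is a direct consequence of boundedness together with Cauchy--Schwarz, while the substance lies in the sufficiency, which I would obtain either by promoting the local testing condition (b) to the global testing condition of Theorem~\ref{thm:ntv2007-1} or, more robustly, by a direct two-weight stopping-time argument.

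For necessity, write $N:=\|\mathcal{M}_v^{1/2}T\mathcal{M}_{u^{-1}}^{1/2}\|_{L^2(\R)\to L^2(\R)}$. Feeding $u^{-1/2}\mathbbm{1}_I\in L^2(\R)$ into the conjugated operator gives $\int_{\R}|T(\mathbbm{1}_I u^{-1})|^2 v\le N^2\,u^{-1}(I)$, and discarding the integral over $\R\setminus I$ yields the first inequality in (b); the second follows symmetrically from the adjoint. For (a) one pairs $T\mathbbm{1}_I$ against $\mathbbm{1}_J$, splits the weights as $\mathbbm{1}_I=u^{-1/2}(u^{1/2}\mathbbm{1}_I)$ and $\mathbbm{1}_J=v^{1/2}(v^{-1/2}\mathbbm{1}_J)$, and applies Cauchy--Schwarz and the operator bound; the restriction $2^{-r}|I|\le|J|\le 2^r|I|$ is exactly the band of scales on which a width-$r$ operator can act, since $d_{tree}(I,J)\le r$ forces $I$ and $J$ to be comparable within a factor $2^r$.

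For sufficiency I would first attempt the clean reduction: fix $I$ and split $\int_{\R}|T(\mathbbm{1}_I u^{-1})|^2 v=\int_I+\int_{\R\setminus I}$. The first term is (b); for the tail, the band structure confines the escape of $T(\mathbbm{1}_I u^{-1})$ out of $I$ to finitely many Haar frequencies $h_K$ with $K$ within tree-distance $r$ of $I$, and these near-diagonal contributions are precisely what (a) is designed to control. Establishing the global testing condition this way would let Theorem~\ref{thm:ntv2007-1} finish the proof. The more robust route, and the one actually carried out in \cite{NTV2}, bypasses Theorem~\ref{thm:ntv2007-1}: one dualizes to the bilinear form $\sum_{d_{tree}(I,J)\le r}\langle Th_I,h_J\rangle\,\alpha_I\beta_J$, where $\alpha_I,\beta_J$ are the Haar data of the two inputs, uses the band structure to keep only comparable pairs, and performs a corona (parallel stopping-time) decomposition adapted simultaneously to $u^{-1}$ and $v$ so that the weighted averages are essentially frozen on each corona block. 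On each block the diagonal part is handled by the local testing condition (b), the near-diagonal interactions by (a), and the blocks are summed through a Carleson embedding in the spirit of Lemma~\ref{WCL}.

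The hard part is exactly this last summation. Freezing the averages forces a stopping-time tree that must respect both weights at once, and controlling the resulting sum is not a matter of orthogonality but of a genuine two-weight Carleson embedding; in \cite{NTV2} this is where an intricate Bellman-function estimate enters. A secondary but real subtlety, which is why condition (a) cannot be dropped and why the reduction to Theorem~\ref{thm:ntv2007-1} is delicate, is that for comparable nearby intervals the averages $\langle v\rangle_K$ and $\langle v\rangle_I$ need not be comparable when $v$ fails to be dyadic doubling; the off-diagonal testing condition (a) is precisely the hypothesis that absorbs these non-doubling tails, and verifying that it does so is the crux of the argument.
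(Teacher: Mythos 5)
First, a structural point: the paper offers no proof of this statement at all --- it is quoted, with attribution, as Theorem 1.5 of \cite{NTV2} and used later as a black box --- so your proposal must stand on its own, and it does not: the necessity argument for (a) is wrong. With your splitting $\mathbbm{1}_I=u^{-1/2}\bigl(u^{1/2}\mathbbm{1}_I\bigr)$ and $\mathbbm{1}_J=v^{1/2}\bigl(v^{-1/2}\mathbbm{1}_J\bigr)$, Cauchy--Schwarz and the bound $N$ on $\mathcal{M}_v^{1/2}T\mathcal{M}_{u^{-1}}^{1/2}$ give $|\langle T\mathbbm{1}_I,\mathbbm{1}_J\rangle|\le N\sqrt{u(I)\,v^{-1}(J)}$, and the quantity $\sqrt{u(I)v^{-1}(J)}$ is incomparable to the stated $\sqrt{u^{-1}(I)v(I)}$: Cauchy--Schwarz only gives $|I|^2\le u(I)u^{-1}(I)$, which goes the wrong way. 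In fact no argument can prove (a) as literally written, because it is not a necessary condition: take $T=\mathrm{Id}$ (a band operator with $r=0$), $u\equiv 1$, $v(x)=\min(1,|x|^{-2})$; then $\mathcal{M}_v^{1/2}T\mathcal{M}_{u^{-1}}^{1/2}$ is multiplication by $v^{1/2}\le 1$, hence bounded with norm $1$, yet for $I=J=[2^n,2^{n+1})$ one has $|\langle T\mathbbm{1}_I,\mathbbm{1}_J\rangle|=2^n$ while $u^{-1}(I)v(I)=1/2$, so (a) fails for every finite $C$. The paper's (a) is a misquotation of \cite{NTV2}, whose condition keeps the weights inside the pairing; testing boundedness on $f=u^{-1/2}\mathbbm{1}_I$ and $g=v^{1/2}\mathbbm{1}_J$ (the same test functions you used for (b), where your argument is correct) yields $|\langle T(u^{-1}\mathbbm{1}_I),v\mathbbm{1}_J\rangle|\le N\sqrt{u^{-1}(I)\,v(J)}$, and that weighted form is what you should state, prove necessary, and then use.

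On sufficiency, what you give is a plan rather than a proof, and its first route rests on a false premise: $\mathbbm{1}_Iu^{-1}$ has nonzero Haar coefficients on every dyadic $K\supsetneq I$, and for such $K$ the function $Th_K$ lives at scale $|K|$, so the band structure does \emph{not} confine $T(\mathbbm{1}_Iu^{-1})$ to finitely many frequencies near $I$; the escaping large-scale part is precisely why Theorem~\ref{thm:ntv2007-1} demands global testing and why relaxing to local testing in Theorem~\ref{thm:ntv2007-2} is nontrivial, so the ``clean reduction'' cannot work as described. Your fallback (a corona decomposition of the dualized bilinear form, with (b) handling diagonal blocks and the weak-boundedness condition handling near-diagonal pairs) is the right shape of argument, but it is left entirely schematic, and one attribution is off: the Bellman-function machinery in this circle of ideas belongs to \cite{NTV1} (it is what this paper invokes for Theorem~\ref{thm:bilinear-embedding}), whereas the proof in \cite{NTV2} proceeds by a combinatorial decomposition together with weighted Carleson embedding, not by a Bellman estimate.
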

By using the above theorems, we can get the following proposition.

\begin{proposition}
	For the operators $T^t_{w,\sigma}$ and $T_{\sigma}T_{w,t}$,   the following are equivalent:

\begin{itemize}
\item[{\rm (1)}] $T^t_{w,\sigma}$ is a bounded operator from $L^2(u)$ into $L^2(v)$.
\item[{\rm (2)}] $T_{\sigma}T_{w,t}$ is a bounded operator from $L^2(u)$ into $L^2(vw^{2t})$.  
\item[{\rm (3)}] There is a constant $C>0$ such that $\langle v^{1/2}T^t_{w,\sigma}(u^{-1/2}f),g\rangle \leq C\|f\|_{L^2(\R)}\|g\|_{L^2(\R)}$.
\item[{\rm (4)}] There is a constant $C>0$ such that $\langle v^{1/2}w (T_{\sigma}T_{w,t})(u^{-1/2}f),g\rangle \leq C\|f\|_{L^2(\R)}\|g\|_{L^2(\R)}$.
\item[{\rm (5)}] For all dyadic intervals $I$, the following two inequalities hold:\\
$$\int_{\mathbb{R}}\left|\sum_{K\in\mathcal{D}}\frac{\sigma_I}{\langle w\rangle_K^t}\langle \mathbbm{1}_I u^{-1}, h_K\rangle h_K(x)\right|^2 v(x)\, w^{2t}(x)\, dx\leq Cu^{-1}(I)\,,$$
$$\int_{\mathbb{R}}\left|\sum_{K\in\mathcal{D}}\frac{\sigma_I}{\langle w\rangle_K^t}\langle \mathbbm{1}_I vw^{2t}, h_K\rangle h_K(x)\right|^2 u^{-1}(x)\, dx\leq C(vw^{2t})(I)\,.$$
\item[{\rm (6)}] The following two conditions hold:
\begin{itemize}
\item[{\rm (a)}] For all dyadic intervals $I$ and $J$ satisfying $|J|=|I|$, we have that,
$$\left|\left\langle \sum_{K\in\mathcal{D}}\frac{\sigma_I}{\langle w\rangle_K^t}\langle \mathbbm{1}_I,h_K\rangle h_K,\mathbbm{1}_J\right\rangle\right|\leq C \sqrt{u^{-1}(I)\, vw^{2t}(J)}\,.$$  
\item[{\rm (b)}] For all dyadic intervals $I$, we have that,
$$\int_{I}\left|\sum_{K\in\mathcal{D}}\frac{\sigma_I}{\langle w\rangle_K^t}\langle \mathbbm{1}_I u^{-1}, h_K(x)\rangle h_K\right|^2 v(x)\,w^{2t}(x)\, dx\leq Cu^{-1}(I)\,,$$
$$\int_{I}\left|\sum_{K\in\mathcal{D}}\frac{\sigma_I}{\langle w\rangle_K^t}\langle \mathbbm{1}_I vw^{2t}, h_K(x)\rangle h_K\right|^2 u^{-1}(x)\, dx\leq C(vw^{2t})(I)\,.$$
 \end{itemize}
\end{itemize}

\end{proposition}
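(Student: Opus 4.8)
The plan is to prove the proposition as a chain of equivalences, separating the conditions that are mere reformulations of one another from those that genuinely require the Nazarov--Treil--Volberg band-operator machinery. First, the equivalence (1) $\Leftrightarrow$ (2) is already in hand: in the proof of Theorem~\ref{thm:necessity(i)-(iii)} we established the operator-norm identity $\|T^t_{w,\sigma}\|_{L^2(u)\to L^2(v)}=\|T_{\sigma}T_{w,t}\|_{L^2(u)\to L^2(vw^{2t})}$, which rests on the pointwise factorization $v^{1/2}T^t_{w,\sigma}(u^{-1/2}f)=(vw^{2t})^{1/2}(T_{\sigma}T_{w,t})(u^{-1/2}f)$, using $v^{1/2}w^t=(vw^{2t})^{1/2}$. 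I would then obtain (1) $\Leftrightarrow$ (3) and (2) $\Leftrightarrow$ (4) as the standard duality reformulations: boundedness of an operator $S\colon L^2(u)\to L^2(vw^{2t})$ is equivalent, via the isometries $\mathcal{M}_{u^{1/2}}\colon L^2(u)\to L^2(\R)$ and $\mathcal{M}_{(vw^{2t})^{-1/2}}$, to the uniform bilinear bound on $\langle (vw^{2t})^{1/2}S(u^{-1/2}f),g\rangle$ over $f,g\in L^2(\R)$. The same factorization shows the bilinear forms appearing in (3) and (4) coincide, so conditions (1)--(4) collapse into a single statement.

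The heart of the matter is relating (2) to the testing conditions (5) and (6), and here I would invoke Theorems~\ref{thm:ntv2007-1} and~\ref{thm:ntv2007-2} applied to $T:=T_{\sigma}T_{w,t}$. The key structural fact, noted just above the proposition, is that $T_{\sigma}T_{w,t}$ is a band operator with $r=0$, since $\langle (T_{\sigma}T_{w,t})h_I,h_J\rangle=\sigma_I\langle w\rangle_I^{-t}\delta_{IJ}$ is diagonal; moreover, having real diagonal entries, it is self-adjoint, so $T^*=T$. With the weight dictionary $u_{\mathrm{NTV}}=u$ (hence $u_{\mathrm{NTV}}^{-1}=u^{-1}$) and $v_{\mathrm{NTV}}=vw^{2t}$, condition (2) asserts exactly that $\mathcal{M}_{(vw^{2t})^{1/2}}T\mathcal{M}_{u^{-1/2}}$ is bounded on $L^2(\R)$. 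Theorem~\ref{thm:ntv2007-1} translates this into the two global testing inequalities, and computing $T(\mathbbm{1}_Iu^{-1})=\sum_{K\in\mathcal{D}}\sigma_K\langle w\rangle_K^{-t}\langle \mathbbm{1}_Iu^{-1},h_K\rangle h_K$ together with the analogous expression for $T^*(\mathbbm{1}_Ivw^{2t})$ reproduces precisely the two displays in (5). Likewise Theorem~\ref{thm:ntv2007-2}, whose testing integrals run only over $I$ at the cost of the weak-boundedness pairing on intervals of comparable length (reducing to $|I|=|J|$ when $r=0$), yields (6) after the same explicit substitution.

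The main obstacle I anticipate is a technical mismatch with the hypotheses of the band-operator theorems: their definition requires $L^2(\R)$-boundedness of $T$ itself, whereas $T_{\sigma}T_{w,t}$ need not be bounded on $L^2(\R)$ for arbitrary $w$ and $t$, as its diagonal entries $\langle w\rangle_I^{-t}$ may be unbounded. I would resolve this exactly as in the sufficiency proof of Theorem~\ref{thm:2weight-t-HaarMultiplier}: treat the band structure as a purely matrix-level property and run the argument on finite Haar truncations $T_N$, each of which is a genuine bounded band operator, then check that the testing constants furnished by Theorems~\ref{thm:ntv2007-1} and~\ref{thm:ntv2007-2} are uniform in the truncation parameter $N$. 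Passing to the limit delivers boundedness of the twisted operator, and hence of $T^t_{w,\sigma}$, without ever demanding that $T_{\sigma}T_{w,t}$ be bounded on the unweighted space. The remaining work is bookkeeping: confirming that self-adjointness collapses the two testing conditions of each theorem into the symmetric pairs displayed in (5) and (6), and that the $r=0$ band width makes the comparability range $2^{-r}|I|\le|J|\le 2^r|I|$ in Theorem~\ref{thm:ntv2007-2} degenerate to $|I|=|J|$.
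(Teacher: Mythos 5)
Your proposal is correct and takes essentially the same route as the paper's (very brief) proof: $(1)\Leftrightarrow(2)$ via the norm identity $\|T_{w,\sigma}^t\|_{L^2(u)\to L^2(v)}=\|T_{\sigma}T_{w,t}\|_{L^2(u)\to L^2(vw^{2t})}$, $(1)\Leftrightarrow(3)$ and $(2)\Leftrightarrow(4)$ by duality, and $(2)\Leftrightarrow(5)$, $(2)\Leftrightarrow(6)$ by applying Theorems~\ref{thm:ntv2007-1} and~\ref{thm:ntv2007-2} to the constant Haar multiplier $T_{\sigma}T_{w,t}$, a band operator with $r=0$. Your extra bookkeeping --- observing that self-adjointness of $T_{\sigma}T_{w,t}$ collapses the two testing conditions into the symmetric pairs of (5) and (6), and especially the truncation argument addressing that $T_{\sigma}T_{w,t}$ need not be a priori bounded on $L^2(\R)$ (which the paper's Definition of a band operator formally requires, a point the paper's proof silently glosses over) --- adds care beyond the paper's argument but does not change the approach.
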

\begin{proof}
(1) $\Leftrightarrow$ (2) is due to the equality 
\begin{equation}\|T_{w,\sigma}^t\|_{L^2(u)\rightarrow L^2(v)}=\|T_{\sigma}T_{w,t}\|_{L^2(u)\rightarrow L^2(vw^{2t})}\,.\label{equalnorm} \end{equation} (1) $\Leftrightarrow$ (3) and (2) $\Leftrightarrow$ (4) hold by duality.    Theorem \ref{thm:ntv2007-1} implies  (2) $\Leftrightarrow$ (5) and Theorem \ref{thm:ntv2007-2}  implies that (2) $\Leftrightarrow$ (6), since $T_{\sigma}T_{w,t}$ is a band operator with $r=0$.
\end{proof}

\end{document}